\documentclass[english]{extarticle}
\usepackage[T1]{fontenc}
\usepackage[utf8]{inputenc}
\usepackage{geometry}
\usepackage{comment}
\usepackage{booktabs}
\usepackage{subcaption}
\usepackage{caption}
\usepackage{comment}
\usepackage{array}
\usepackage{multirow}
\usepackage{amsmath}
\usepackage{amsthm}
\usepackage{amssymb}
\usepackage{stmaryrd}
\usepackage{graphicx}
\usepackage{xcolor}
\usepackage{algorithm}
\usepackage{algpseudocode}
\usepackage[numbers]{natbib}
\usepackage[title]{appendix}

\makeatletter

\theoremstyle{plain}
\newtheorem{thm}{\protect\theoremname}
\theoremstyle{remark}
\newtheorem{rem}{\protect\remarkname}
\theoremstyle{definition}

\usepackage{authblk}

\@ifundefined{showcaptionsetup}{}{%
 \PassOptionsToPackage{caption=false}{subfig}}
\usepackage{subfig}
\makeatother

\usepackage{babel}
\providecommand{\definitionname}{Definition}
\providecommand{\remarkname}{Remark}
\providecommand{\theoremname}{Theorem}

\begin{document}
\title{Predicting Generalized Steady States in Aperiodically Forced Mechanical Systems}
\author{ Roshan S. Kaundinya$^{1}$, Isabella Thiel$^{2}$, B\'alint Kasz\'as$^{3}$, Shobhit Jain$^{4}$ and  George Haller$^{1}$ \thanks{georgehaller@ethz.ch}\\ 

$^{1}$Institute for Mechanical Systems, ETH Z\"{u}rich\\ $^{2}$SAND Lab, MIT  \\
$^{3}$Center for Turbulence Research, Stanford University\\
$^{4}$ TU Delft\\}
\maketitle
\begin{abstract}

The existence of generalized steady states (GSSs) in nonlinear mechanical systems under moderate temporally aperiodic forcing has only been shown recently. Here we derive systematic expansions for such GSSs and construct a numerical algorithm that yields explicit and arbitrarily refinable approximations for GSSs without the need for an initial convergence period. This is to be contrasted with a direct numerical integration of the system, whose convergence is hard to assess or is even undefined for short, transient forcing. When at least the linear part of the equations of motion is known, our GSS algorithm outperforms available data-driven neural-network-based techniques for predicting forced response in structural dynamics problems. In a fully equation-driven setting, our GSS computations are shown to be faster than a direct numerical integration of forced nonlinear finite-element models of beams and shells.
\begin{comment}
Dynamic steady states are defining characteristics of a non-autonomous dynamical system. Such states capture long-term behavior and key statistical properties of the dynamical system. Vibration and civil engineers expect dynamic steady states to exist for externally forced structural mechanical systems and seek the same through direct numerical time-integration. The most common seeking process is to launch the non-autonomous dynamical system at an arbitrary initial condition for a long duration. From the resulting trajectory, the dynamic steady state is recovered by ignoring the initial transient behavior. Naive seeking approaches are costly when the dynamical system describes a forced lowly damped high dimensional mechanical device or civil structure.  We prove the existence of dynamic steady states which we define as generalized steady states (GSS) for aperiodically forced dynamical systems using only hyperbolicity properties of the unforced dynamical systems. We offer a computational methodology that directly computes the GSS and offers computational speed ups compared to naive shooting approaches. Specially, our method provides an analytic asymptotic Taylor expansion of the GSS. We successfully uncover GSSs for classic mechanical systems and finite-element models describing beams and plates subject to various aperiodic forcing profiles. We also demonstrate the utility of our approach on a spectral submanifold-reduced model  and compare with recent approaches using long-short term neural networks (LSTM).
\end{comment}

\end{abstract}

\section{Introduction}

%(please cite naturally occuring forcings, also cite works that look at limiting forced response for damage)
Civil engineering structures, airplane wings and wind turbine blades are constantly tested for damage under external environmental conditions (see, e.g., \citet{sause2021structural} and \citet{Vidal2014}). For vibration engineering analysis, these environmental conditions are modeled as external forces acting on a structural system. Naturally occurring external forces stem from earthquake ground accelerations (\citet{PEER_NGA_West2}), wind pressure fluctuations (\citet{carta09}), soil-structure interactions (\citet{harte12}) and fluid-structure interactions (\citet{cadot16}). Such external forcings have two key characteristics: they always act over a finite time interval and are temporally aperiodic. 

Within such a finite time interval, damage detection and assessment schemes crucially rely on the forced response of the structures. Of particular concern to the practitioner is the prolonged structural deformation caused by persistent forcing, which requires understanding the structure's steady state (i.e., converged) forced response. Establishing that a system has reached such a steady state is straightforward under periodic or quasiperiodic external forcing: one simply waits until the response reaches a periodic or quasiperiodic state that has the same frequency content as the forcing. This approach, however, is inapplicable under general aperiodic forcing. One might still hope that a statistical steady state is reached at some point, but this would assume a well-defined, steady, long-term statistics for the external forcing. In practice, this assumption is typically not satisfied, or the time interval of interest is too short to infer any statistics for the forcing or to conclude the convergence of the forced response definitively. A practically relevant example is forcing caused by earthquakes, which is generally short-lived and hence has ill-defined temporal statistics.

%( cite balanced trunctaiom, harmonic balance, shooting methods for periodic, aperiodic and quasiperidic, 
% Guesswork
Direct numerical integration is routinely used to probe and estimate aperiodic steady states (see \citet{newmark59}, \citet{hht_77} and \citet{felippa79_DNI} for a review). Such approaches, however, do not address the number of such steady states, the optimal initial condition required to locate them, and the transients to be removed. Moreover, structural engineering systems are generally high-dimensional and nonlinear, resulting in computationally intensive time-integrations (see \citet{avery7}, \citet{jain19}). 

Shooting and collocation methods for periodic steady states address some of the above limitations of direct numerical integration, particularly those arising from sensitivity to initial conditions and the elimination of transients (see \citet{keller68}, \citet{Diekhoff1977} and \citet{Dankowicz13}). However, they remain computationally expensive for multi-degree-of-freedom systems (see \citet{renson16} for a review). In contrast, the harmonic balance method assumes that a periodic steady state can be represented by a formal Fourier expansion, where the Fourier coefficients satisfy a nonlinear algebraic system of equations (see \citet{Kryloff32}). Similar balancing approaches have been extended to quasi-periodic forcing (see  \citet{chua81} and \citet{lau83}), and this methodology has been automated in the \textit{NLvib} software toolbox (see \citet{krack2019harmonic}).

All the above-discussed numerical approaches to computing periodic or quasi-periodic steady states assume the existence of such states, with all other solutions converging to them. While the assumption is trivially satisfied for damped linear systems, the same condition does not automatically extend to nonlinear dynamical systems. For second-order nonlinear dynamical systems with linear damping, the existence of periodic steady states has been established by \citet{Rouche80}. Their ideas have been extended in \citet{breunung19} to provide sufficient conditions on the existence of periodic forced response in general nonlinear mechanical systems. \citet{breunung21} has recently extended these results to cover quasi-periodic steady states.

On the computational side, \citet{jain19} and \citet{buza21} have derived numerical techniques grounded in existence proofs based on the Banach fixed-point theorem. These techniques yield an iterative algorithm, \textit{SSRTool} (see \citet{shobhit_jain_2021_4421944}), that computes periodic and quasi-periodic steady states by solving nonlinear integral equations. However, these rigorous numerical approaches also become challenging to apply when the system dimension is large, or the unforced linear operator of the full model is unknown, as \textit{SSRTool} relies on exponential kernels associated with that operator.

In contrast, reducing a high-dimensional dynamical system onto a robust, low-dimensional invariant manifold (see \citet{guckenheimer83} and \citet{shaw93}) and seeking steady state solutions on such manifolds overcomes these challenges. Spectral submanifolds (SSMs), defined first by  \citet{haller16}, are generalizations of the invariant manifolds first appearing in \citet{shaw93}. Specifically, slow SSMs are normally hyperbolic invariant manifolds that capture the dominant dynamics of the system, and hence their internal dynamics represent a mathematically justified reduced-order model (see \citet{haller25} for further details).

\textit{SSMTool}, an open-source numerical toolkit, automates the computation of SSMs and the computation of periodic forced response curves (FRCs) directly from the equations of motion of high-dimensional dynamical systems (see \citet{jain23}). Another open-source toolkit, \textit{SSMLearn}, extracts slow SSMs from experimental or numerical data and predicts FRCs under arbitrary periodic forcing (see \citet{cenedese21}). Recent work by \citet{kaszas2025}, have enlarged the domain of validity of SSM-reduction in both equation- and data- driven settings via the use of Pad\'e approximations instead of Taylor expansions. This extension offers predictive power under large amplitude periodic forcing as well.

%( cite Burd, Haller book, challenging to estimate exponentional integrals
% analytic results proving they exist

For truly aperiodic forcing, however, no comparable rigorous results or automated toolboxes have been available to date. As a broadly used operational approach, proper orthogonal decomposition (POD) has been employed to approximate aperiodically forced response in known governing equations (see \citet{lumley1970stochastic} and \citet{SIROVICH1989126}). In this setting, POD modes are first computed from forced response data, followed by a projection of the known evolution equations onto the POD modes. This approach is therefore fully equation-driven, and the computed POD modes depend on the forcing realization used. Space-time and spectral POD methods (see \citet{Towne18}) circumvent this issue by computing modes in the frequency domain, producing modes that are independent of the specific forcing realization. However, these approaches are justified only when the applied forcing is statistically stationary.

%We summarize here, existing approaches that address the existence assumptions for GSSs and also computational methdologies that are based on this. Starting with linear dynamical systems with additive forcing allow for closed form Duhammel-Fredholm type integral solutions of the GSS. In particular, these results provide the exact GSS of the linear forced system for arbitrary large forcing under the assumption the unforced linear system admits a hyperbolic fixed point. Local existence results for periodic GSSs for weakly nonlinear dynamical systems with weak additive periodic forcing have been provided in . This was strengthened by Breueng et.al for general periodically forced nonlinear dynamical systems and recently similar approaches were use to extend these results for quasiperiodic GSS.  Jain et al, provide tool kit \textit{SSRtool} that computes periodic and quasiperiodic GSSs by solving for the periodic GSS as a solution to the nonlinear integral equations. However, if the system dimension is large or if the unforced linear operator is unknown, all these informed computational approaches become challenging as they crucially depend on the exponential kernels involving the unforced linear operator. 

%(for equation-data based , statistical assumptions, method devised here)
%(data-based) (reduced-modeling) (data-assisted)  

%(DMD extensions for nonautonomous data)
Data-driven approaches for modeling non-autonomous dynamical systems extend dynamic mode decomposition (DMD) (see \citet{schmid10}) by fitting correction terms deviating from the DMD analysis (see \citet{mrdmd16}) or by fitting a linear dynamical system in the extended phase space by including temporal information as an additional observable (see \citet{mezic16}). Both approaches implicitly assume that the non-autonomous system evolves linearly and are therefore not justified for use in the presence of nonlinearities.

Alternatively, several data-driven methods focus on using autoregressive neural networks to fit temporally aperiodic dynamic data. Notably, \citet{CHAMPNEYS2024474} provides a thorough comparison of various recurrent neural network methods applied to benchmark experimental datasets, and \citet{schar25} uses nonlinear autoregressive functions to model aperiodically excited civil engineering structures. The primary difficulty in fitting general-purpose autoregressive neural networks lies in the unclear roles played by their parameters and hyperparameters. Even if these issues are resolved, achieving reasonable predictive power still demands training on vast datasets that thoroughly explore the phase space of the underlying temporally aperiodic dynamical system. However, even vast amounts of data offer no guarantees of success, as there exist no theoretical results supporting a convergence to a robust neural network model.

%(neural networks to forced data) start with a lot of data nn
To improve black-box neural network models for randomly forced dynamical systems,  \citet{worden17} fit a static Gaussian neural network to positional data to infer nonlinear normal modes of the unforced system. This approach assumes that the unforced nonlinear normal modes persist unchanged under random forcing but offers no predictive reduced dynamical model to predict a general forced response. \citet{simpson2021} overcome this limitation by employing a long-short-term memory (LSTM) neural network to model dynamics in a reduced modal space and an autoencoder mapping that links the reduced space to the phase space. The optimization employed, however, does not guarantee that the resulting reduced model remains predictive under other forcing realizations.

%(finding structure to unforced data that survive forcing)
All approaches discussed so far ultimately aim to predict aperiodic forced responses and implicitly assume the existence of an aperiodic steady state. Theoretical guarantees for such steady states have long existed. For linear systems with external additive forcing, \citet{burd07} derives a closed-form integral expression for the unique aperiodic steady state under arbitrarily large forcing, provided the unforced system has a hyperbolic fixed point. For nonlinear systems, \citet{palmer73} establishes existence and uniqueness for small forcing under similar hyperbolicity conditions. Recently, \citet{haller24_wa} have built on these classic results and extended spectral submanifold (SSM) theory to moderate, uniformly bounded temporally aperiodic forcing. They show that the dominant dynamics lie on an SSM anchored to a unique generalized steady state (GSS), for which they also derive asymptotic equations. 

Building on the latter results, we develop here an automated methodology to predict nonlinear forced response under aperiodic external forcing. We illustrate our methodology by computing the GSS for four benchmark structural dynamical systems, each under the presence of a physically relevant aperiodic external forcing. For the practitioner, we present our methodology as a publicly available MATLAB toolkit, \textit{GSSTool} (see \citet{kaundinya_gsstool}), which takes as input the unforced equations of the nonlinear mechanical system and the external forcing history. We show that \textit{GSSTool} is also applicable in a data-driven fashion when only the linear part of a mechanical system and the aperiodic forcing are known explicitly.

%Here, we focus, on uncovering the GSS from the governing equations of motion of the system or a reliable SSM-reduced model describing the system. We use general results from  \citet{haller24_wa} which provide analytic derivations for a limiting forced response, we will translate these to our setting in section \ref{sec:2} and further outline an automated computation of the GSS in section \ref{sec:3}. In section \ref{sec:4}, we compute the GSS for four different structural systems, a axially moving beam under chirp forcing, an oscillator chain subject to stochastic forcing, a von K\`arm\`an beam undergoing an earthquake and  a von K\`arm\`an shell subject to uniform aperiodic pressure fluctuations. We also include GSS computations performed on two dimensional SSM-reduced models of the chain and von K\`arm\`an beam setups.

\section{Generalized steady state}
\label{sec:2}
\subsection{Setup}
The governing equations of a forced mechanical system are
\begin{equation}
\label{eq:eom}
\mathbf{M}\ddot{\mathbf{x}} + \mathbf{C} \mathbf{\dot{x}}+\mathbf{K} \mathbf{x} + \mathbf{f}(\mathbf{x},\mathbf{\dot{x}}) = \mathbf{g}(\mathbf{x},\mathbf{\dot{x}},t),
\end{equation}
where $\mathbf{x} \in \mathbb{R}^n$ is the vector of generalized positions, $\mathbf{M} \in \mathbb{R}^{n\times n}$ is a symmetric positive definite mass matrix, $\mathbf{K} \in \mathbb{R}^{n\times n}$ is a symmetric positive definite stiffness matrix, $\mathbf{C} \in \mathbb{R}^{n\times n}$ is a symmetric positive semi-definite damping matrix, $\mathbf{f}$ is a smooth nonlinear function with $\mathbf{f}(\mathbf{0},\mathbf{0})=\mathbf{0}$, and $\mathbf{g}$ represents parametric or external forcing. We assume that $\mathbf{g}$ is uniformly bounded, i.e.,  
\begin{equation}
\label{eq:uniform_boundedness}
\|\mathbf{g}(\mathbf{x},\mathbf{\dot{x}},t)\| \leq \Delta, \quad t\in \mathbb{R}, \quad  (\mathbf{x},\dot{\mathbf{x}}) \in V  \subset \mathbb{R}^{2n}, \quad \mathbf{g}(\mathbf{x},\mathbf{\dot{x}},\cdot) \in C^0(\mathbb{R}),
\end{equation}
for constant $\Delta > 0$, where $\|\cdot\|$ denotes the uniform supremum norm, and $V$ is a compact set in $\mathbb{R}^{2n}$. For small enough $\Delta$, we can treat the external forcing $\mathbf{g}$ as a perturbative term and invoke related mathematical results from \citet{haller24_wa}. As we shall see, however, those results also extend to higher values of $\Delta$ in practice. We also note that in the absence of forcing (i.e., for $\mathbf{g}\equiv \mathbf{0}$), system (\ref{eq:eom}) has a fixed point at $(\mathbf{x},\mathbf{\dot{x}}) = (\mathbf{0},\mathbf{0})$, which we assume to be asymptotically stable.

The second-order system (\ref{eq:eom}) can be recast in a first-order form
\begin{equation}
\label{eq:first_order_system}
\mathbf{B} \mathbf{\dot{z}} = \mathbf{A}\mathbf{z} + \mathbf{F}(\mathbf{z}) + \mathbf{G}(\mathbf{z}, t).
\end{equation}

Here $\mathbf{z} = (\mathbf{x},\mathbf{\dot{x}})$, and, as in  \citet{jain2022}, we have
\begin{align}
\label{eq:second_to_first}
\mathbf{B} = \begin{bmatrix}
\mathbf{C} & \mathbf{M} \\
\mathbf{M} & \mathbf{0}
\end{bmatrix}, \quad \mathbf{A} = \begin{bmatrix}
-\mathbf{K} & \mathbf{0} \\
\mathbf{0} & \mathbf{M}
\end{bmatrix}, \quad \mathbf{F}(\mathbf{z}) = \begin{pmatrix}
-\mathbf{f}(\mathbf{x},\mathbf{\dot{x}}) \\
\mathbf{0} \end{pmatrix},  \quad
\mathbf{G}(\mathbf{z},t) = \begin{pmatrix}
\mathbf{g}(\mathbf{x},\mathbf{\dot{x}},t) \\
\mathbf{0}
\end{pmatrix}.
\end{align}

By our assumptions, the origin is asymptotically stable and hence the linear part of the system (\ref{eq:first_order_system}) admits an exponential dichotomy for $\kappa, K >0$ (see \citet{palmer73} and \citet{haller24_wa}): 
\begin{equation}
\label{eq:dichotmy}
|e^{\mathbf{B}^{-1}\mathbf{A}t}| \leq K e^{-\kappa t}, \quad t\geq 0. 
\end{equation}
Our assumptions also imply that the eigenvalues in the spectrum of $\mathbf{B}^{-1}\mathbf{A}$,
\begin{equation}
\label{eq:specBA}
\text{spect}(\mathbf{B}^{-1}\mathbf{A}) = \{\lambda_1,\dots,\lambda_{2n}\}, 
\end{equation}
can be ordered as
\begin{equation}
    \label{eq:ordering}
\text{Re}\lambda_{2n} \leq \text{Re}\lambda_{2n-1} \leq \dots \leq \text{Re}\lambda_2 \leq   \text{Re}\lambda_1  <0.
\end{equation}
We further assume that $\mathbf{B}^{-1}\mathbf{A}$ is semisimple, and hence we have $2n$ complex eigenvectors 
\begin{equation}
\mathbf{v}_1,\dots,\mathbf{v}_{2n} \in \mathbf{C}^{2n}
\end{equation} corresponding to the eigenvalues appearing in the ordered set (\ref{eq:specBA}). 

To present all the computationally relevant results without inverting $\mathbf{B}$ or $\mathbf{M}$, we diagonalize system $\mathbf{B}\mathbf{\dot{z}} = \mathbf{A}\mathbf{z}$ using eigenvectors satisfying the generalized eigenvalue problem
\begin{equation}
\label{eq:gen_eigen_problem}
(\mathbf{A} - \lambda_j \mathbf{B} ) \mathbf{v}_j = \mathbf{0}, \quad j = 1,2,\dots, 2n.
\end{equation}
We construct a diagonalizing transformation matrix $\mathbf{V}\in \mathbb{C}^{2n\times 2n}$ by arranging the eigenvectors $\mathbf{v}_j$ as its columns, in descending order of $\text{Re}\lambda_j$. We also normalize the eigenvectors such that 
\begin{equation}
\label{eq:normalization_fo}
\mathbf{V}^{*} \mathbf{B} \mathbf{V} = \mathbf{I},
\end{equation}  
with $ \mathbf{I}$ the identity matrix and $()^*$ denotes complex conjugate transpose. 

In case of structural damping, the damping matrix can be expressed as $\mathbf{C} = c_{M}\mathbf{M} + c_{K} \mathbf{K}$ for $c_{K},c_{M} \in \mathbb{R}$,  which allows diagonalizing the linear part of (\ref{eq:first_order_system}) using only eigenvectors satisfying the generalized eigenvalue problem
\begin{equation}
\label{eq:second_order_ev_problem}
\left(\mathbf{K} - \omega_j^2 \mathbf{M} \right) \mathbf{u}_j = \mathbf{0}, \quad j = 1,2,\dots, n.
\end{equation}
Here $\omega_j$ is the undamped natural frequency of the undamped and unforced real eigenmode of vibration, $\mathbf{u}_j$. We further normalize these eigenmodes with the mass matrix $\mathbf{u}^\top_i \mathbf{M} \mathbf{u}_j = \delta_{ij}$. We collect the eigenmodes $\mathbf{u}_j$ as  column vectors in ascending order in $\omega_j$ to obtain a matrix $\mathbf{U} \in \mathbb{R}^{n\times n}$. The coordinate change $\mathbf{x} = \mathbf{U}\mathbf{y}$ then transforms the linear part of (\ref{eq:eom}) into a system of $n$ decoupled damped harmonic oscillators, 
\begin{align}
\label{eq:second_order_l}
\ddot{\mathbf y}_j + 2\omega_j \zeta_j \dot{\mathbf  y}_j + \omega_j^2 \mathbf{ y}_j &= 0, \quad j = 1,\dots, n,
\end{align}
where $\zeta_j = \frac{1}{2 \omega_j}\left(c_M + c_K \omega_j^2\right)$ are the modal damping coefficients.  

To streamline these diagonalization transformations for the general and structural damping cases, we define an $n \times n$ transformation matrix $\mathbf{P}$ that diagonalizes the linear part of system (\ref{eq:first_order_system}) via the coordinate change $\mathbf{z} = \mathbf{P}\boldsymbol{\eta}$ into
\begin{equation}
    \label{eq:modal_equations}
    \dot{\boldsymbol\eta} = \boldsymbol\Lambda \boldsymbol\eta.
\end{equation}
Specifically, for a general damping matrix $\mathbf{C}$, we let
\begin{equation}
\label{eq:g_damping_t}
\mathbf{P} = \mathbf{V}
\end{equation}
and
\begin{equation}
\label{eq:g_damping_l}
\boldsymbol{\Lambda} = \mathrm{diag}(\lambda_1, \dots \lambda_{2n})\in \mathbb{C}^{2n \times 2n}. 
\end{equation}
For the case of structural damping, we include a permutation matrix $\mathbf{S} \in \mathbb{R}^{2n\times 2n}$ by letting 
\begin{equation}
\mathbf{S}_{ij} = \begin{cases}
1 & \text{if } i \text{ is odd and } j = \frac{i+1}{2} ,\\[6pt]
1 & \text{if } i \text{ is even and } j = n + \frac{i}{2}, \\[6pt]
0 & \text{otherwise},
\end{cases} \qquad i,j \geq 1,
\end{equation}
and set
\begin{equation}
\label{eq:s_damping_t}
\mathbf{P} =  \begin{bmatrix}
\mathbf{U} & \mathbf{0} \\
\mathbf{0} & \mathbf{U}
\end{bmatrix} \mathbf{S}^{\mathrm{T}},
\end{equation}
which yields 
\begin{equation}
\label{eq:s_damping_l}
\boldsymbol\Lambda = \mathrm{diag}(\boldsymbol{\lambda}_1, \dots, \boldsymbol{\lambda}_n) \in \mathbb{R}^{2n \times 2n}, \quad \boldsymbol{\lambda}_j = \begin{bmatrix}  0 & 1 \\ 
-\omega_j^2 & -2 \zeta_j \omega_j \end{bmatrix}.
\end{equation}
Therefore, $\mathbf{\Lambda}$ is complex and diagonal in the general damping case, but can be chosen as real and block-diagonal in the structural damping case.

 %In challenging applications, it is routine to compute reduced models of the dynamical system (\ref{eq:first_order_system}) from data or equations. Specifically, if we were to perform model reduction to system  (\ref{eq:first_order_system}) onto a $d$-dimensional slow spectral submanifold (SSM) (see \citet{haller16} and \citet{jain2022}), the leading-order generally forced SSM-reduced-model is 
%\begin{align}
%\label{eq:reduced_SSM}
%\dot{\mathbf{r}} &= \boldsymbol{\Lambda} \mathbf{r} + \mathbf{R}(\mathbf{r}) +  (\mathbf{P}^{\dagger})_E \mathbf{G}(\mathbf{0},t,\mathbf{p}), \\ 
%\mathbf{z}(t) &= \mathbf{W}(\mathbf{r}(t)) +  \int_{-\infty}^{t} \mathbf{P}e^{\mathbf{\Lambda} (t-s)} \mathbf{P}^{\dagger} (1-\mathbf{P}_E (\mathbf{P}^\dagger)_E) \mathbf{G}(\mathbf{0},t,\mathbf{p}). \nonumber
%\end{align}
%Here, $(\mathbf{P})_E$ denotes selecting the first $d$ rows, and $\mathbf{P}_E$ denotes selecting the first $d$ columns of the diagonalizing transformation matrix $\mathbf{P}$, which satisfies the normalization condition (\ref{eq:normalization_fo}). The derivation of the additive forcing term in the SSM-reduced model can be found in Appendix B in \citet{Xu25}. This construct implies computing the SSM as a graph over the linear slow $d$ dimensional subspace $E$. We observe that the SSM-reduced dynamics can always be recast in the form (\ref{eq:first_order_system}).

Spectral subspaces of the linear system (\ref{eq:modal_equations}) are defined as direct sums of eigenspaces of $\boldsymbol{\Lambda}$. All spectral subspaces are invariant under the dynamics of system (\ref{eq:modal_equations}). A $d$-dimensional slow subspace $E$ is the direct sum of the eigenspaces corresponding to the eigenvalues $\lambda_1, \dots, \lambda_d,$ as ordered in eq.(\ref{eq:ordering}). The internal dynamics of such a subspace $E$ provides a $d$-dimensional reduced model of system (\ref{eq:modal_equations}) with which all trajectories of (\ref{eq:modal_equations}) synchronize exponentially fast. The theory of spectral submanifolds (SSMs) guarantees that a $d$-dimensional slow subspace has a unique smoothest continuation $\mathcal{W}_E$ in the nonlinear system (\ref{eq:first_order_system}) for $\mathbf{G}\equiv 0$ under appropriate nonresonance conditions on the spectrum of $\boldsymbol{\Lambda}$ (see \citet{haller25}). Specifically, the SSM $\mathcal{W}_E$ is a $d$-dimensional invariant manifold of system that is tangent to E at the origin.  

For small enough $\Delta >0$ in (\ref{eq:uniform_boundedness}), $\mathcal{W}_E$ has been shown to perturb into a time-dependent SSM $\mathcal{W}_E(t)$ that is attached to a generalized steady state (GSS) (see \citet{haller24_wa} and \citet{haller25}). 

In what follows, we develop a detailed computational algorithm for GSSs in mechanical systems of the form (\ref{eq:eom}). As a consequence, the nonlinear steady state response of system (\ref{eq:first_order_system}) is determined by the GSS without the need to compute $\mathcal{W}_E(t)$.

%Our goal is to develop a tool that efficiently computes the underlying core response for additively forced systems of the form (\ref{eq:eom}), (\ref{eq:first_order_system}) and (\ref{eq:reduced_SSM}) based on the assumptions (i) the forcing satisfies (\ref{eq:uniform_boundedness}) and (ii) the unforced system admits an asymptotically stable fixed point. We will also utilize the spectral information of the system to enhance the computational speed and accuracy of our tool. 

\subsection{GSS uniqueness and formal expansion}
For small enough $\Delta$, we can conclude the existence of a unique generalized steady state (GSS) for system (\ref{eq:first_order_system}) using Theorem 1 from \citet{haller24_wa}, under the uniformly bounded forcing assumption (\ref{eq:uniform_boundedness}) and the existence of an asymptotically stable fixed point for the unforced system. Specifically, the theorem states the existence of a unique uniformly bounded GSS $\mathbf{z}^*(t)$ in an open ball $B_{\delta} \subset \mathbb{R}^{2n}$ around $\mathbf{z} =0$. The GSS is asymptotically stable and is as smooth in any parameter as is system (\ref{eq:first_order_system}). For completeness in Appendix \ref{app:thm2}, we present the proof for the local existence and uniqueness of the GSS using the Picard iteration approach by \citet{jain19}, and verify its consistency with the conditions derived in Theorem 1 of \citet{haller24_wa}.

In contrast to a Picard iteration method, for numerical GSS computation, \citet{haller24_wa} seek the GSS as a formal Taylor expansion up to a finite order $N$, 
\begin{equation}
    \label{eq:anchor}
    \mathbf{z}^*(t) = \sum_{\nu = 1}^{N} \mathbf{z}_{\nu}(t)  + o(\Delta^N),
\end{equation}
which is a solution of the nonlinear system (\ref{eq:first_order_system}), i.e. satisfies 

\begin{equation}
\label{eq:gss_equation}
\mathbf{B}\dot{\mathbf{z}}^*(t) = \mathbf{A}\mathbf{z}^*(t) + \mathbf{F}(\mathbf{z}^*(t)) + \mathbf{G}(\mathbf{z}^*(t),t).
\end{equation}
To obtain the time-dependent Taylor coefficients $\mathbf{z}_\nu(t)$ from (\ref{eq:gss_equation}), \citet{haller24_wa} first introduce a small perturbation parameter $\epsilon>0$ by letting $\mathbf{G} \to \epsilon \mathbf{G}$ and $\mathbf{z}_{\nu} \to \epsilon^\nu \mathbf{z}_{\nu}$. They then expand the GSS in $\epsilon$ in (\ref{eq:gss_equation}) and compare terms to $O(\epsilon^\nu)$. At each order, they arrive at a system of inhomogeneous linear ODEs with the homogeneous part given by the linear part of system (\ref{eq:first_order_system}) and an inhomogeneous part that recursively depends on the GSS Taylor coefficients of order $O(\epsilon^{\nu-1})$ and lower.  The recursive formulas are derived from more general multivariate Faa Di Bruno formulas originally obtained by \citet{constantine96}. 

The exact solution for the recursively defined $\mathbf{z}_{\nu}(t)$ is
    \begin{align}
    \label{eq:recursive_terms}
    \mathbf{z}_{\nu}(t) = \int_{-\infty}^t \mathbf{P} e^{\mathbf{\Lambda}(t-s)} \left( \mathbf{P}^* \mathbf{B}\mathbf{P}\right)^{-1}\mathbf{P}^* \Biggl(& \sum_{1\leq |\boldsymbol{\gamma}|\leq \nu} \mathbf{D}_{\mathbf{z}}^{\boldsymbol{\gamma}}\mathbf{F}|_{\mathbf{z}=0} \sum_{q=1}^{\nu}\sum_{\mathbf{k},\mathbf{l} \in p_q(\nu,\boldsymbol{\gamma})}  \mathbf{z}_{\mathbf{l}q}^{\mathbf{k}}(s) \\ &+ 
    \sum_{1\leq |\boldsymbol{\gamma}|\leq \nu-1} \mathbf{D}_{\mathbf{z}}^{\boldsymbol{\gamma}}\mathbf{G}|_{\mathbf{z}=0}\sum_{q=1}^{\nu-1}\sum_{\mathbf{k},\mathbf{l} \in p_q(\nu-1,\boldsymbol{\gamma})}^{} \mathbf{z}_{\mathbf{l}q}^{\mathbf{k}}(s)
    \Biggr) ds , \quad \nu>1,\nonumber
    \end{align}
    with initial condition 
    \begin{align}
    \label{eq:recursive_terms_0}
    \mathbf{z}_1(t) = \int_{-\infty}^{t}  \mathbf{P} e^{\mathbf{\Lambda}(t-s)} \left( \mathbf{P}^* \mathbf{B}\mathbf{P}\right)^{-1}\mathbf{P}^* \mathbf{G}(\mathbf{0},s) ds.
    \end{align}
    Here $\boldsymbol{\gamma}\in \mathbb{N}^{2n}$ with  $|\boldsymbol{\gamma}|=\sum_{i=1}^{2n} \boldsymbol{\gamma}_i$. The factor $\mathbf{z}_{\mathbf{l}q}^{\mathbf{k}}(s)$ is defined as 
    \begin{equation}
    \label{eq:z_factor}
    \mathbf{z}^{\mathbf{k}}_{\mathbf{l}q}(s) = \prod_{j=1}^{q} \prod_{i=1}^{2n} \frac{(\mathbf{z}^i_{({l}_j)}(s))^{\mathbf{k}_{ji}}}{\mathbf{k}_{ji}!}, 
    \end{equation}
    the operator $\mathbf{D}_{\mathbf{z}}^{\boldsymbol{\gamma}}$ as
    \begin{equation}
    \mathbf{D}_{\mathbf{z}}^{\boldsymbol{\gamma}} = \frac{\partial^{|\boldsymbol{\gamma}|}}{\partial \mathbf{z}_1^{\boldsymbol{\gamma}_1} \dots \mathbf{z}_{2n}^{\boldsymbol{\gamma}_{2n}}},
    \end{equation}
    and the index set $p_q(\nu,\boldsymbol{\gamma})$ is defined as
    \begin{equation}
    \label{eq:index_set}
    p_q(\nu,\boldsymbol{\gamma}) = \left\{ (\mathbf{k},\mathbf{l}): \mathbf{k} \in \mathbb{N}^{q \times 2n}-\{\mathbf{0}\},\mathbf{l} \in \mathbb{N}^q - \{\mathbf{0}\}, {l}_1<\dots<{l}_q, \sum_{j=1}^q \mathbf{k}_{ji} = \boldsymbol{\gamma}_i, \sum_{i=1}^{2n}\sum_{j=1}^q \mathbf{k}_{ji} {l}_j = \nu \right\}.
    \end{equation}

    The detailed proof of these formulas is provided in Appendix A2 in \citet{haller24_wa}. We highlight practical advantages of these formulas in the following three remarks:

\begin{rem}
\textbf{[Applicability to practical settings]} 
The formal asymptotic GSS expansion (\ref{eq:anchor}) can be readily computed for any given smooth functions $\mathbf{F}$ and $\mathbf{G}$ in $\mathbf{z}$. These nonlinear functions are polynomials in benchmark structural engineering problems modeled using the method of finite elements, which allows for a quick computation of the coefficients in eq. (\ref{eq:recursive_terms}). Our formulas do not assume that the external forcing $\mathbf{G}(\mathbf{0},t)$ is $C^0$ in $t$, thus we can compute unique GSS approximations for any uniformly bounded stochastic forcing realization. 
\end{rem}

\begin{rem}
\textbf{[GSS expansion with an explicit scalar forcing level]} 
It is common in structural vibration studies to introduce a scalar parameter that controls the magnitude of forcing on the system. In our context, this can be achieved by letting $\mathbf{G} \to \Delta \mathbf{\tilde{G}}$, where $\mathbf{\tilde{\mathbf{G}}} = \frac{\mathbf{G}}{\|\mathbf{G}\|}$ is a normalized forcing signal. The GSS expansion (\ref{eq:anchor}) with the forcing parameter $\Delta$ made explicit is of the form \begin{equation}
\label{eq:GSS_single_forcing}
\mathbf{z}^*(t,\Delta) = \sum_{\nu=1}^{N} \mathbf{\tilde{z}}_{\nu}(t) \Delta^\nu + o(\Delta^N).
\end{equation}
Here $\tilde{\mathbf{z}}_\nu(t)$ is computed by substituting the redefined forcing function $\mathbf{\tilde{G}}$ in our GSS into formulas (\ref{eq:recursive_terms}) and (\ref{eq:recursive_terms_0}). Setting $\Delta = \|\mathbf{G}\|$ in (\ref{eq:GSS_single_forcing}), we recover the GSS for the forced system (\ref{eq:first_order_system}). This parametric formula for the GSS allows for straightforward computation of the GSS for any user-defined forcing level using the already computed GSS Taylor expansion coefficients for the normalized forcing function. 
\end{rem}

\begin{rem}
\textbf{[Computational advantage]} 
The expansion coefficients (\ref{eq:recursive_terms}) are uniformly bounded solutions of recursively defined linear inhomogeneous ODEs. Solving such linear ODEs is faster than solving nonlinear forced ODEs. In addition to this speed-up, solution methods for linear inhomogeneous ODEs are guaranteed to converge unlike their nonlinear counterparts, which need further tuning by the user. Here, we will provide numerical techniques to approximate the integrals in (\ref{eq:recursive_terms}) directly. 
\end{rem}

\begin{rem}
\textbf{[Computing GSS on a forced SSM-reduced model]} 
When the nonlinearities of system (\ref{eq:eom}) are unknown, we compute the GSS approximation (\ref{eq:anchor}) using the reduced equations of the slow SSM $\mathcal{W}_E(t)$ obtained from data. Specifically, given access to unforced decaying trajectories, we use SSMLearn \citep{cenedese21} to identify a $d$-dimensional autonomous slow SSM with reduced coordinates $\mathbf{r} \in \mathbb{R}^d$
, SSM parametrization $\mathbf{W}(\mathbf{r})$, and reduced dynamics $\mathbf{R}(\mathbf{r})$. Combining this SSM with the known linear parts of system (\ref{eq:eom}), we then infer the leading-order forced SSM-reduced model, 
\begin{align}
\label{eq:reduced_SSM}
\dot{\mathbf{r}} &= \mathbf{R}(\mathbf{r}) +  (\mathbf{P}^{*})_E \mathbf{G}(\mathbf{0},t), \\ 
\mathbf{z}(t) &= \mathbf{W}(\mathbf{r}(t)) +  \int_{-\infty}^{t} \mathbf{P}e^{\mathbf{\Lambda} (t-s)} \mathbf{P}^{*} (1-\mathbf{P}_E (\mathbf{P}^*)_E) \mathbf{G}(\mathbf{0},t). \nonumber
\end{align}
Here $(\mathbf{P})_E$ denotes selecting the first $d$ rows, and $\mathbf{P}_E$ denotes selecting the first $d$ columns of the diagonalizing transformation matrix $\mathbf{P}$, which satisfies the normalization condition (\ref{eq:normalization_fo}). The derivation of the leading order terms in the SSM-reduced model (\ref{eq:reduced_SSM}) can be found in Appendix B in \citet{Xu25}. Setting $\mathbf{B} \equiv \mathbf{I}$, $\mathbf{A} = \mathbf{D}_{\mathbf{r}}\mathbf{R}(\mathbf{0})$, $\mathbf{F} \equiv \mathbf{R}(\mathbf{r}) - \mathbf{D}_{\mathbf{r}}\mathbf{R}(\mathbf{0}) $  and $\mathbf{G} \equiv (\mathbf{P}^{*})_E \mathbf{G}(\mathbf{0},t)$ in eq. (\ref{eq:gss_equation}), we compute the GSS Taylor coeffiencits using formulas (\ref{eq:recursive_terms}) in the SSM reduced coordinates, then using the SSM parametrization map (\ref{eq:reduced_SSM}) we lift these GSS Taylor coefficients to the physical space. 
\end{rem}

\section{Automated computation of the GSS expansion}
\label{sec:3}
The computation of the index set (\ref{eq:index_set}) can be expedited using recursive definitions for automated computation of functional compositions of Taylor coefficients based on radial derivatives, as implemented in \textit{SSMTool} for polynomial nonlinearities (see \citet{haro06} and \citet{ponsioen2020}). We discuss this approach below with numerical methods that accurately evaluate the kernel integral in (\ref{eq:recursive_terms}).

\subsection{Iterated recursion via multi-index notation}

We assume the function $\mathbf{F}$ and $\mathbf{G}$ contain only polynomial nonlinearities in $\mathbf{z}$.  Using the multi-index convention for polynomial nonlinearities introduced in \citet{jain2022} and \citet{thurner23}, we let 
\begin{equation}
\label{eq:multiindex_notation}
\mathbf{F}(\mathbf{z}) = \sum_{\mathbf{m} \in \mathbb{N}^{2n}} \mathbf{F}_{\mathbf{m}} \mathbf{z}^{\mathbf{m}} 
\end{equation}
and
\begin{equation}
\mathbf{G}(\mathbf{z},t) = \sum_{\mathbf{m} \in \mathbb{N}^{2n}} \mathbf{G}_{\mathbf{m}}(t) \mathbf{z}^{\mathbf{m}}, 
\end{equation}

with the notation $\mathbf{z}^{\mathbf{m}} = {z}_1^{{m_1}}\dots {z}_{2n}^{{m_{2n}}} $. We substitute these polynomial expansions into (\ref{eq:recursive_terms}) and focus on the evaluation of the integrand
\begin{equation}
\label{eq:poly_expr}
 \sum_{1\leq |\boldsymbol{\gamma}|\leq \nu} \mathbf{F}_{\boldsymbol{\gamma}} \prod_{i=1}^{2n} \boldsymbol{\gamma}_i! \sum_{q=1}^{\nu}\sum_{p_q(\nu,\boldsymbol{\gamma})}  \mathbf{z}_{\mathbf{l}q}^{\mathbf{k}}(s) +
    \sum_{1\leq |\boldsymbol{\gamma}|\leq \nu-1} \mathbf{G}_{\boldsymbol{\gamma}}(s) \prod_{i=1}^{2n} \boldsymbol{\gamma}_i!
    \sum_{q=1}^{\nu-1}\sum_{p_q(\nu-1,\boldsymbol{\gamma})}^{} \mathbf{z}_{\mathbf{l}q}^{\mathbf{k}}(s)
    .
\end{equation}

As shown by \citet{thurner23} and implemented in \textit{SSMTool}, the $O(x^\nu)$ terms in the expansion of a 1D autonomous SSM with reduced coordinate $x$ is of the form 
\begin{equation}
 \sum_{1\leq |\boldsymbol{\gamma}|\leq \nu} \mathbf{F}_{\boldsymbol{\gamma}} \mathbf{H}_{\boldsymbol{\gamma},\nu} , 
\end{equation}
where coefficients $\mathbf{H}_{\boldsymbol{\gamma},\nu}$ records contributions arising from function compositions. In the case of the GSS expansions, we must allow for general time-dependence in these coefficients and include the contributions due to external forcing, which gives
\begin{equation}
\label{eq:radial_GSS}
\boldsymbol{\Phi}_{\nu}(s) = 
\begin{cases}
\mathbf{G}_{\mathbf{0}}(s) & \text{if } \nu = 1, \\[1em]
\displaystyle
\sum_{1 \leq |\boldsymbol{\gamma}| \leq \nu} \mathbf{F}_{\boldsymbol{\gamma}} \mathbf{H}_{\boldsymbol{\gamma},\nu}(s)
+ \sum_{1 \leq |\boldsymbol{\gamma}| \leq \nu-1} \mathbf{G}_{\boldsymbol{\gamma}}(s) \mathbf{H}_{\boldsymbol{\gamma},\nu-1}(s) & \text{if } \nu > 1.
\end{cases}
\end{equation}
with the time-dependent coefficients $\mathbf{H}_{\boldsymbol{\gamma},\nu}(s)$ defined as
\begin{equation}
\label{eq:radial_derivative}
\mathbf{H}_{\boldsymbol{\gamma},\nu}(s)  =\Big[\left(\mathbf{z}^*(s)  \right)^{\boldsymbol{\gamma} }\Big]_{O(\Delta^{\nu})} = \left[\sum_{q=1}^{\nu}\left(\sum_{j=1}^{q} \mathbf{z}_{({l}_j)}(s)  \right)^{\boldsymbol{\gamma} }\right]_{O(\Delta^{\nu})}.
\end{equation}
Here $[\cdot]_{O(\Delta^\nu)}$ denotes all terms contributing at order $\nu$. As we show in Appendix \ref{app:radial_derivative_Faa_Di_bruno}, the algebraic expression (\ref{eq:radial_GSS}) is equivalent to the GSS integrand in (\ref{eq:poly_expr}). We will use formula (\ref{eq:radial_GSS}) to compute efficiently the integrand (\ref{eq:poly_expr}) for our GSS algorithm.  

The quantities  $\mathbf{H}_{\boldsymbol{\gamma},\nu}(s)$ at a fixed value of $s$, are defined recursively in \citet{thurner23}, and are implemented in \textit{SSMTool}. For autonomous problems, judicious use of these compositional coefficients have provided major speedup for the computation of spectral submanifolds (SSMs). For the time-dependent calculations required for GSSs, the same approach is computationally inefficient.

To this end, we modify the compositional structure to store all time-dependent functions $\mathbf{H}_{\boldsymbol{\gamma},\nu}(s)$ as 3-tensors with size $\mathbf{H} \in \mathbb{R}^{2n \times N \times T}$ where $N$ is the maximal order of the GSS expansion and $T$ is the total number of temporal data points of the forcing signal. In practice there will always be a finite but large time interval over which the forcing function $\mathbf{G}$ is given. The value of $T$ will then be fixed by the temporal data matrix $\mathbf{G}_{\mathbf{m}} \in \mathbb{R}^{2n \times T}$ supplied by the user. These tensorial definitions retain the same recursive properties of $\mathbf{H}_{\boldsymbol{\gamma},\nu}(s)$ but with new rules that utilize tensor products to enhance computational speed.   

\subsection{Evaluation of integrals in the GSS formulas}
For $\nu=1$, the formula (\ref{eq:radial_GSS}) is the external forcing applied to the system, 
\begin{equation}
\boldsymbol{\Phi}_1(t) = \mathbf{G}(\mathbf{0},t) = (\mathbf{g}(\mathbf{0},\mathbf{0},t),\mathbf{0})^\mathrm{T}.
\end{equation}
Due to the recursive nature of the definitions and the block structure of the mechanical system, the integrand (\ref{eq:radial_GSS}) for $\nu > 1$ always has the form $\boldsymbol{\Phi}_{\nu}(t) = (\boldsymbol{\phi}_{\nu}(t), \mathbf{0})^\mathrm{T}$ with $\boldsymbol{\phi}_{\nu}(t) \in \mathbb{R}^{n}\times \mathbb{R}$.

 In practical applications, the external forcing will be defined at $T$ equally spaced temporal points over a finite time interval $[t_0,t_f]$ with uniform spacing $\delta t$. For all times $t<t_0$, we set the forcing to zero in evaluating the integrals in formulas (\ref{eq:recursive_terms})-(\ref{eq:recursive_terms_0}). 
 Hence, we shall always pad the forcing signal with zeros up-to a finite length $T_p \delta t$, where $T_p$ controls the padding length.     
 
 With these considerations, we obtain 
\begin{equation}
\label{eq:Integral_Fredholm}
\mathbf{z}_{\nu} (t) =  \int_{-\infty}^t \mathbf{P}e^{\mathbf{\Lambda}(t-s)} \left( \mathbf{P}^* \mathbf{B}\mathbf{P}\right)^{-1}\mathbf{P}^* \boldsymbol{\Phi}_{\nu}(s) ds \approx \int_{-T_p \delta t}^t \mathbf{P}e^{\mathbf{\Lambda}(t-s)} \left( \mathbf{P}^* \mathbf{B}\mathbf{P}\right)^{-1}\mathbf{P}^* \boldsymbol{\Phi}_{\nu}(s) ds,
\end{equation}
 a Fredholm integral of the second kind (see \citet{Atkinson2015}). \citet{jain19} evaluated similar integrals to (\ref{eq:Integral_Fredholm}) as convolutions with Newton-Cotes quadrature rules. Newton-Cotes rules require integrand values at equally spaced intervals (see \citet{abramowitz1965handbook}). These rules are then formulated by approximating the integral as a weighted sum of these values. However, for exponential kernels there is no guarantee that these integral evaluations with Newton-Cotes rules converge for decreasing interval spacing or higher-order rules. 

An alternative strategy is to perform  Gaussian quadrature (see \citet{abramowitz1965handbook}), which are guaranteed to converge but require values of the integrand at arbitrary temporal spacings. Such schemes suffer numerically for stiff ODEs, which is commonly the case for structural systems that contain stiff degrees of freedom.

For the mechanical system (\ref{eq:eom}), implicit Newmark methods circumvent stiffness issues (see \citet{newmark59} and \citet{GeradinRixen2015}). In fact, the expression (\ref{eq:Integral_Fredholm}) can be obtained by an implicit Newmark time integration of the second-order linear inhomogeneous system 
\begin{equation}
\label{eq:newmark}
\mathbf{M} \ddot{\mathbf{x}}_{\nu} + \mathbf{C} \dot{\mathbf{x}}_{\nu} + \mathbf{K} \mathbf{x}_{\nu}  = \boldsymbol{\phi}
_\nu(t+(t_0-T_p\delta t)), \quad \mathbf{z}_{\nu}(0) = \mathbf{0}, \quad  \mathbf{z}_{\nu} = (\mathbf{x}_{\nu},\dot{\mathbf{x}}_\nu)^\mathrm{T},
\end{equation}
over the time interval $[T_p \delta t ,T_p \delta t +t_f ]$. 

Instead of this approach, however, we use here a method that directly calculates the integral (\ref{eq:Integral_Fredholm}) without the stiffness issues faced by quadrature rules and is also applicable to first-order systems of the form (\ref{eq:first_order_system}). 

We set this up by rewriting (\ref{eq:Integral_Fredholm}) as a recursive solution 
\begin{equation}
\label{eq:fund_integral}
 \mathbf{z}_{\nu}(t+\delta t) = \mathbf{P}e^{\mathbf{\Lambda}\delta t } \left( \mathbf{P}^* \mathbf{B}\mathbf{P}\right)^{-1}\mathbf{P}^*\mathbf{z}_{\nu}(t) +  \int_{0}^{\delta t} \mathbf{P}e^{\mathbf{-\Lambda}s} \left( \mathbf{P}^* \mathbf{B}\mathbf{P}\right)^{-1}\mathbf{P}^* \boldsymbol{\Phi}_\nu(s+t) ds,
\end{equation}
with initial condition
\begin{equation}
\mathbf{z}_{\nu}(-T_p\delta t)=\mathbf{0},
\end{equation}
and compute the finite integral over the interval $[0,\delta t]$.

\subsubsection{Case 1: General damping}
\label{subsec:general_damping}
From Section 2.1, we substitute for $\boldsymbol{\Lambda}$ and $\mathbf{P}$ in the finite integral in (\ref{eq:fund_integral}) with their  general damping counterparts (\ref{eq:g_damping_l}) and (\ref{eq:g_damping_t}), respectively. Using the normalization condition (\ref{eq:normalization_fo}), the resulting integral expression can be represented as 
\begin{equation}
\label{eq:L_j_gd}
\mathbf{V}\mathbf{L}(t,\delta t),
\end{equation}
with $\mathbf{L}(t,\delta t) \in \mathbb{R}^{2n}$ defined as 
\begin{equation}
\label{eq:l_ref_gd}
\mathbf{L}_j(t,\delta t) =\int_0^{\delta t}  e^{-\lambda_j s} \left[\mathbf{V}^* \boldsymbol{\Phi}_\nu(s+t) \right]_{j} ds\quad  j =1,\dots, 2n .
\end{equation}

We approximate the forcing vector signal as a piecewise linear function over the interval $[t, t+\delta t]$. This implies 
\begin{equation}
\label{eq:pwl}
\boldsymbol\Phi_\nu(s+t) = \boldsymbol\Phi_\nu(t) + \frac{s}{\delta t } \left(\boldsymbol\Phi_\nu(t+\delta t) - \boldsymbol\Phi_\nu(t)\right).
\end{equation}
This allows us to explicitly calculate scalar integrals defining $\mathbf{L}_j$ in (\ref{eq:l_ref_gd}) as 
\begin{equation}
\label{eq:first_order_integral}
\mathbf{L}_j(t,\delta t) = \mathbf{Q}_j^{\mathrm{T}}(\delta t )  \begin{pmatrix}
 \left[\mathbf{V}^* \boldsymbol{\Phi}_\nu(t) \right]_{j} \\
\left[\mathbf{V}^* \boldsymbol{\Phi}_\nu(t+\delta t) \right]_{j}
\end{pmatrix},
\end{equation}
with 
\begin{align}
\mathbf{Q}_j(\delta t ) = \begin{pmatrix}
\frac{e^{\lambda_j\delta t}-1}{\lambda_j} - \frac{1}{\delta t}\left(\frac{e^{\lambda_j\delta t}-1}{\lambda_j^2}-\frac{\delta t}{\lambda_j}\right)\\
\frac{e^{\lambda_j\delta t}-1}{\lambda_j^2}-\frac{\delta t}{\lambda_j}
\end{pmatrix}.
\end{align}
We have to compute $\mathbf{Q}_j$ only once for a user supplied value of $\delta t$ for the evaluation of the integral (\ref{eq:Integral_Fredholm}). The vectors $\mathbf{Q}_j$ are well defined for $\lambda_j \neq 0$, which is always the case for damped mechanical systems. For periodic or quasi-periodic forcing, we can evaluate these integrals exactly without the piecewise approximation for the function $\boldsymbol{\Phi}_\nu(s)$. However, in those cases, the vector $\mathbf{Q}_j$ is well defined only when none of the eigenvalues $\lambda_j$ are in resonance with the external forcing frequencies.

\subsubsection{Case 2: Structural damping}
\label{subsec:structural_damping}
For structural damping, the matrix $\mathbf{\Lambda}$ takes a block-diagonal structure (see eq. (\ref{eq:s_damping_l})) given by 
\begin{equation}
\label{eq:block_second_order}
\boldsymbol\Lambda = \mathrm{diag}(\boldsymbol{\lambda}_1, \dots, \boldsymbol{\lambda}_n), \quad \boldsymbol{\lambda}_j \in \mathbb{R}^{2 \times 2},\qquad  \boldsymbol{\lambda}_j = \begin{bmatrix}  0 & 1 \\ 
-\omega_j^2 & -2 \zeta_j \omega_j \end{bmatrix},\quad    \omega_1<\dots < \omega_n.
\end{equation}
Substituting for the modal transformation $\mathbf{P}$ with eq. (\ref{eq:s_damping_t}) and $\mathbf{B}$ from eq. (\ref{eq:second_to_first}),  the finite integral in (\ref{eq:fund_integral}) takes the form
\begin{equation}
   \int_{0}^{\delta t} \begin{bmatrix}
\mathbf{U} & \mathbf{0} \\
\mathbf{0} & \mathbf{U}
\end{bmatrix} \mathbf{S}^{\mathrm{T}}e^{\mathbf{-\Lambda}s} \mathbf{S} \begin{bmatrix} 
\mathbf{0}& \mathbf{1} \\
\mathbf{1} & -\mathbf{U}^\mathrm{T}\mathbf{C}\mathbf{U}
\end{bmatrix} \begin{bmatrix} 
\mathbf{U}^\mathrm{T} & \mathbf{0} \\
\mathbf{0} & \mathbf{U}^\mathrm{T}
\end{bmatrix}  \boldsymbol{\Phi}_\nu(s+t) ds,
\end{equation} 
which can be simplified and written as
\begin{equation}
   \int_{0}^{\delta t} \begin{bmatrix}
\mathbf{U} & \mathbf{0} \\
\mathbf{0} & \mathbf{U}
\end{bmatrix} \mathbf{S}^{\mathrm{T}}e^{\mathbf{-\Lambda}s} \mathbf{S} \begin{pmatrix}   \mathbf{0} \\\mathbf{U}^\mathrm{T} \boldsymbol{\phi}_\nu(s+t) \end{pmatrix}  ds.
\end{equation}
Rearranging the terms in the integral, we obtain
\begin{equation}
\begin{bmatrix}
\mathbf{U} & \mathbf{0} \\
\mathbf{0} & \mathbf{U}
\end{bmatrix} \mathbf{S}^{\mathrm{T}} \cdot \mathbf{L}(t,\delta t),
\end{equation}
with $\mathbf{L}(t,\delta t) \in \mathbb{R}^{2n}$ defined as 
\begin{equation}
\label{eq:sd_L_int}
\mathbf{L}_j(t,\delta t) = \int_{0}^{\delta t} e^{-\boldsymbol{\lambda}_js} \begin{pmatrix}\boldsymbol{0} \\
[\mathbf{U}^\top \boldsymbol\phi_\nu(s+t)]_j\end{pmatrix}  ds,\quad  j =1,\dots, n .
\end{equation}
Under the assumption of piecewise linearity for the forcing function, eq. (\ref{eq:pwl}) implies that we can express the integral (\ref{eq:sd_L_int}) as
\begin{equation}
\label{eq:second_order_integral}
\mathbf{L}_j(t,\delta t)= \mathbf{Q}_j(\delta t )  \begin{pmatrix}
 [\mathbf{U}^\top \boldsymbol\phi_\nu(t)]_j\\
 [\mathbf{U}^\top \boldsymbol\phi_\nu(t+\delta t)]_j
\end{pmatrix}.
\end{equation} 
Here, the matrix $\mathbf{Q}_j(\delta t) \in \mathbb{R}^{2 \times 2}$  is 
\begin{align}
\label{eq:sd_q}
\mathbf{Q}_j(\delta t ) = \begin{cases}
\mathbf{Q}^u_j (\delta t ), \quad \zeta_j < 1 \quad (\text{underdamped}) \\ 
\mathbf{Q}^c_j(\delta t ), \quad  \zeta_j = 1 \quad (\text{critically damped})\\
\mathbf{Q}^o_j (\delta t ), \quad \zeta_j > 1 \quad (\text{overdamped})
\end{cases}.
\end{align}

The exact expressions for these matrices and their derivation is detailed in Appendix \ref{app:second_qj}. The above formulas preserve the second order structure and are solely expressed in terms of the vibration modes of the mechanical system. Once the matrices $\mathbf{Q}_j(\delta t)$ are precomputed for the dynamical system (\ref{eq:eom}), they can be  readily used in GSS calculations for arbitrary forcing functions. 

Formula (\ref{eq:second_order_integral}) for a single-degree-of-freedom, underdamped oscillator were first derived by \citet{Dempsey_Duhammel_78}. Our more general formulas (\ref{eq:sd_L_int})-(\ref{eq:sd_q}) apply to multi-degree-of-freedom systems with arbitrary damping. 

\subsubsection{Modal truncation}
\label{subsec:mode_select}
A complete diagonalization of the linear part of a high-dimensional mechanical system is computationally expensive. For this reason, we employ a modal truncation to approximate formulas (\ref{eq:first_order_integral}) and (\ref{eq:second_order_integral}). Specifically, the matrices $\mathbf{V}$ or $\mathbf{U}$ are truncated to contain only $m$ eigenvectors or mode shapes. If we represent these truncated transformation matrices as  $\mathbf{V}_{\mathrm{trunc}} \in \mathbb{C}^{2n \times m}$ or $\mathbf{U}_{\mathrm{trunc}} \in\mathbb{R}^{n \times m}$, the corresponding formulas read
\begin{equation}
\label{eq:formulas}
\mathbf{Q}_j^\mathrm{T}(\delta t )  \begin{pmatrix}
 \left[\mathbf{V}_{\mathrm{trunc}}^* \boldsymbol{\Phi}_\nu(t) \right]_{j} \\
\left[\mathbf{V}_{\mathrm{trunc}}^* \boldsymbol{\Phi}_\nu(t+\delta t) \right]_{j}
\end{pmatrix},  \quad j \in \{i_1,\dots i_m\},
\end{equation}

\begin{equation}
\label{eq:formulas}
  \mathbf{Q}_j(\delta t )  \begin{pmatrix}
 [\mathbf{U}_{\mathrm{trunc}}^\mathrm{T}\boldsymbol\phi_\nu(t)]_j\\
 [\mathbf{U}_{\mathrm{trunc}}^\mathrm{T} \boldsymbol\phi_\nu(t+\delta t)]_j
\end{pmatrix}, \quad \quad j \in \{i_1,\dots i_m\}.
\end{equation}
Here the index set $\{i_1,\dots i_m\}$ labels the $m$ selected eigenvectors or modes. We typically select eigenmodes whose corresponding eigenvalues satisfy 
\begin{equation}
 e^{\delta t \mathrm{Re\lambda_j}} > \epsilon,
\end{equation}
for a tolerance value $\epsilon>0$, which we set by default to $\epsilon =10^{-3}$. The dimensionless quantity $e^{\delta t\mathrm{Re\lambda_j}}$ controls the magnitude of the entries in $\mathbf{Q}_j(\delta t)$. This modal approximation selects the eigenvectors corresponding to the $m$ slowest eigenvalues. 

\subsection{Vector Pad\'e approximations for the GSS}
\label{subsec:vector_pade}
Even though a unique GSS exists, its time-dependent Taylor approximation may diverge for large magnitudes of the forcing vector. A strategy to enhance the domain of convergence of our computed GSS Taylor approximation is to construct a Pad\'e approximation from its coefficients (see \citet{Baker_Graves-Morris_1996}). 
\citet{kaszas2025} discuss in detail Pad\'e approximations to enlarge the domain of convergence of Taylor approximations of spectral submanifolds. In their setting, the Taylor coefficients are constant for all times. We could apply similar methods at fixed time instances, but this would lead to recomputing a Pad\'e approximation of the GSS for each each time instance, which is inefficient. 

We seek a universal rational function approximation of the GSS that is proven to have convergent properties for all temporal points. \citet{lane24} have recently used a vector Pad\'e approximation to approximate the steady state solution for the forced Navier-Stokes equations. Applying this idea to each phase space variable $\mathbf{z}^j$, we employ a vector Pad\'e  approximation,   
\begin{equation}
\label{eq:pade}
\mathbf{z}^{*j}(t) \approx \boldsymbol{z}^j = [\mathbf{z}^{*j}(t_1), \dots, \mathbf{z}^{*j}(t_p)]   = \text{Pad\'{e}}[L,M] \overset{\mathrm{def.}}{=} \frac{\sum_{\nu = 1}^{\nu = L} \boldsymbol{a}^j_\nu  }{1 + \sum_{\nu = 1}^{\nu = M} b^j_\nu } + o(\Delta^{L+M}), 
\end{equation}
with $\boldsymbol{z}^j,\boldsymbol{a}^j_\nu \in \mathbb{R}^{p} \text{ and }  b^j_\nu \in \mathbb{R}$. Here $p$ is the total number of temporal points for which the Taylor GSS values are known. We solve for $\boldsymbol{a}^j_\nu$ and $b^j_\nu$ in eq. (\ref{eq:pade}), first calculating a Taylor GSS approximation of order $N = L +M$,
\begin{equation}
\label{eq:taylor}
\boldsymbol z^j =  \sum_{\nu = 1}^{N = L+M}  \boldsymbol z^j_{\nu} + o(\Delta^N).  
\end{equation}
Equating eq. (\ref{eq:taylor}) with the Pad\'e approximation in eq. (\ref{eq:pade}) and collecting terms at $o(\Delta^{L+1})$ and higher, we obtain the following linear system of equations in $b^j$ 
\begin{align}
\label{eq:lin_pade}
\begin{pmatrix}
\boldsymbol{z}^j_L & \cdots & \boldsymbol{z}^j_{L-M+1} \\
\boldsymbol{z}^j_{L+1} & \cdots & \boldsymbol{z}^j_{L-M+2} \\
\vdots & \vdots & \vdots \\
\boldsymbol{z}^j_{L+M-1} & \cdots & \boldsymbol{z}^j_L
\end{pmatrix}
 \begin{pmatrix}
b^j_1 \\
\vdots \\
b^j_M
\end{pmatrix}=-\begin{pmatrix}
\boldsymbol{z}^j_{L+1} \\
\vdots \\
\boldsymbol{z}^j_{L+M}.
\end{pmatrix}. 
\end{align}
The linear system is generally overdetermined as $p>>L+M$, therefore, we seek the solution of $b^j$, using the least squares method. Using the obtained $b^j$, the vector coefficients $\boldsymbol{a}^j$ of the Pad\'e approximation are determined by 
\begin{align}
\boldsymbol{a}^j_1 &= \boldsymbol{z}^j_1, \\
\boldsymbol{a}^j_\nu &=  \boldsymbol{z}^j_{\nu} + \sum_{\mu = 1}^{\nu-1} b^j_{\mu} \boldsymbol{z}^j_{\nu-\mu}, \quad  \quad 1<\nu \leq L.
\end{align}
These formulas are derived from collecting terms of order $o(\Delta^{L})$ and smaller. If the true GSS $\mathbf{z}^*(t)$ is a rational function with an analytic numerator, then Theorem 8.4.9 from \citet{Baker_Graves-Morris_1996} provides convergence results for its vector Pad\'e approximation as $L=M$ and $M \to \infty$. Motivated by this result, we will always compute a vector Pad\'e approximation for the case $L=M$ for large values of $M$. 

\section{Examples}
\label{sec:4}
In this section, we illustrate on examples of increasing complexity the predictive power and accuracy of the formulas and numerical procedures we have discussed for computing GSSs. In our examples, the forcing function $\mathbf{g}$ will have pure time dependence without state dependence. 

Some of our examples will be fully equation-driven, i.e., will utilize full knowledge of the equations of motion of the mechanical system. Other examples will be partially equation driven in that they will only assume knowledge of the linear part of the system and infer its nonlinear part from data. 

We will approximate GSSs by computing $O(\mathrm{N})$ Taylor expansions (see eq. (\ref{eq:anchor})). If the $O(\mathrm{N})$ Taylor expansions turns out to be divergent for a given forcing level, we compute the $\text{Pad\'e}[\mathrm{N},\mathrm{N}]$ GSS expansion using eq. (\ref{eq:pade}) with $O(2\mathrm{N})$ GSS Taylor approximations. All our examples and methods discussed here are available in the open source MATLAB code \textit{GSSTool} (see \citet{kaundinya_gsstool}).

\subsection{Example 1: Axially moving beam under chirp excitation}
\label{subsec:e1}
We consider a simply supported slender viscoelastic beam that moves with a constant velocity $v$ in the axial direction. This example emulates the dynamics of underwater cables and power transmission lines, where axial motion comes from the constant towing speed of underwater cables or waves moving along overhead power lines (see \citet{Pellicano_1999}). The constant axial movement of the beam creates a gyroscopic damping force whose linear part does not satisfy the structural damping hypothesis (see \citet{Pellicano_1999} and \citet{li22a}). 

The beam is made of steel with density $\rho = 7680 \text{ } [\mathrm{kg/m^3}]$, Young's modulus $E = 30 \times 10^{9} \text{ } [\mathrm{Pa}]$, and viscosity $\nu = 30 \times 10^{5} \text{ } [\mathrm{Pa}]$. The beam's length is $ l = 1 \text{ } [\mathrm{m}]$ and its cross-sectional area $A = 1.2 \times 10^{-3} \text{ } [\mathrm{m^2}]$. The axial velocity of the beam is $v = 43.82 \text{ [m/s]}$. The governing partial differential equations (PDEs) of the beam are discretized using the Galerkin approach with Fourier basis functions to obtain a second order system of the form (\ref{eq:eom}). The exact implementation of the discretized equations of motion of the beam can be found in the GitHub repository \textit{SSMTool} (see \citet{jain23}). The discretized axial beam has $n =10$ degrees of freedom and its transverse coordinates are given by
\begin{equation}
w(x,t) = \sum_{j=1}^{n} \sin(\pi jx)\mathbf{u}_j(t),
\end{equation}
where $\mathbf{u}_j$ denote the Galerkin modes. The geometry of the moving beam is depicted in Fig.\ref{fig:setup_AB}.  We transversely excite the structure at the base with a chirp forcing signal whose instantaneous frequency varies in time as
\begin{equation}
\label{eq:chirp_sig}
\Omega(t) = \sin(2\pi(t^2/2+\omega t)),
\end{equation}
where $\omega \approx 3.2 \text{ } [\mathrm{Hz}]$ is the natural frequency of the slowest mode. Formula (\ref{eq:chirp_sig}) defines a linear chirp signal with a starting frequency of $\omega$ which we apply to the beam over a duration of $10 \text{ [s}]$ (see Fig.\ref{fig:chirp_force}). The maximal amplitude of the forcing $\Delta$ is $0.63 \text{ [N]}$. This forcing level is approximately $600$ times larger than the value used to periodically excite the beam by \citet{li22a}.

\begin{figure}[H]
\centering
\begin{subfigure}{0.45\textwidth}
    \raisebox{2cm}{\includegraphics[width=\textwidth]{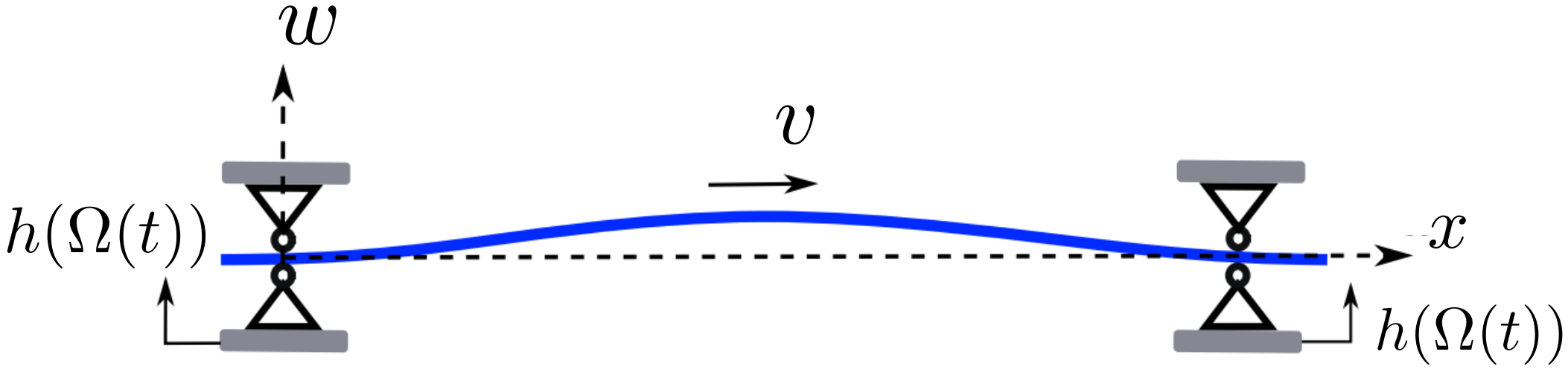}}
    \caption{}
    \label{fig:setup_AB}
\end{subfigure}
\hfill
\begin{subfigure}{0.45\textwidth}
    \includegraphics[width=\textwidth]{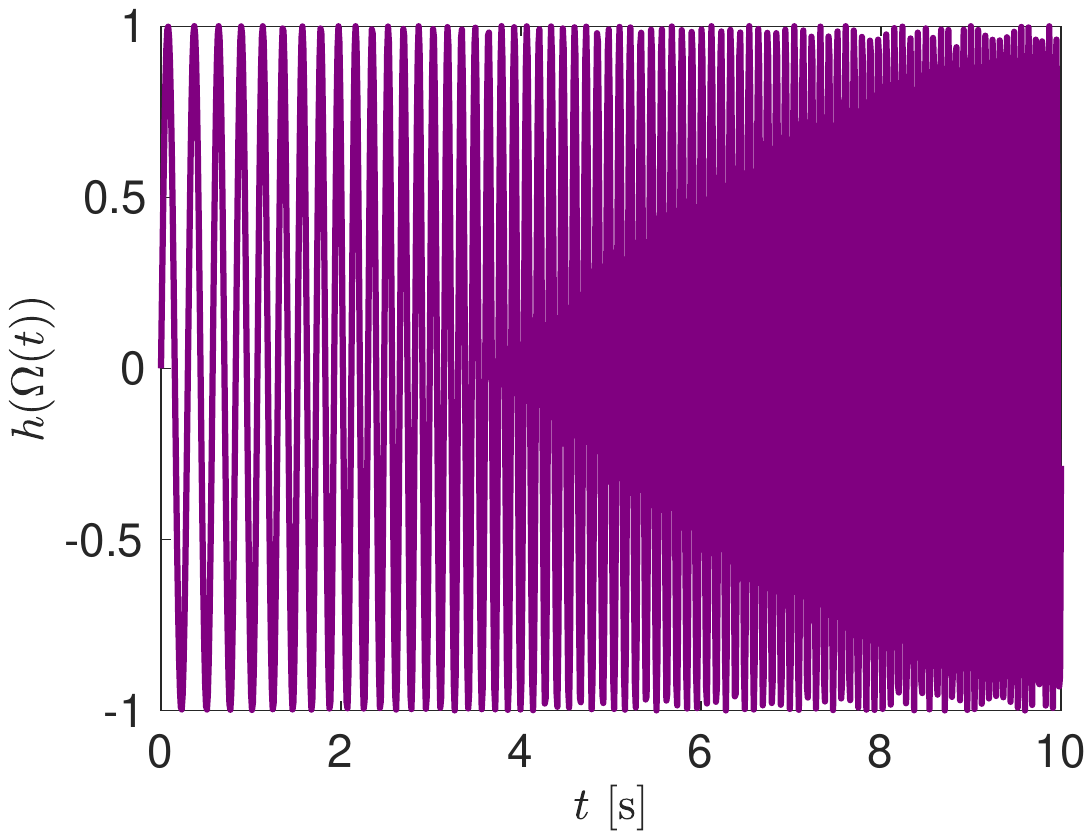}
    \caption{}
    \label{fig:chirp_force}
\end{subfigure}
\caption{(a) The axially moving beam setup adopted from \citet{li22a} with simply supported boundary conditions. (b) Linear chirp forcing signal profile.}
\label{fig:AB_chirp}
\end{figure}

We compute an $O(\mathrm{10})$ GSS Taylor approximation for the beam's Galerkin discretized model using the formulas appearing in Section \ref{subsec:general_damping} for the case of general damping. Since the forcing is zero for $t<0$, we do not need to pad the integral formulas eq. (\ref{eq:Integral_Fredholm}). We also compute the $O(1)$ GSS Taylor approximation, which gives the linear response of the forced system (\ref{eq:eom}).  We then compare these GSS Taylor approximations with the full model. The time integration of the full model is performed using the implicit Newmark method (see \citet{newmark59}).

\begin{figure}[H]
\centering
\begin{subfigure}{0.45\textwidth}
    \includegraphics[width=\textwidth]{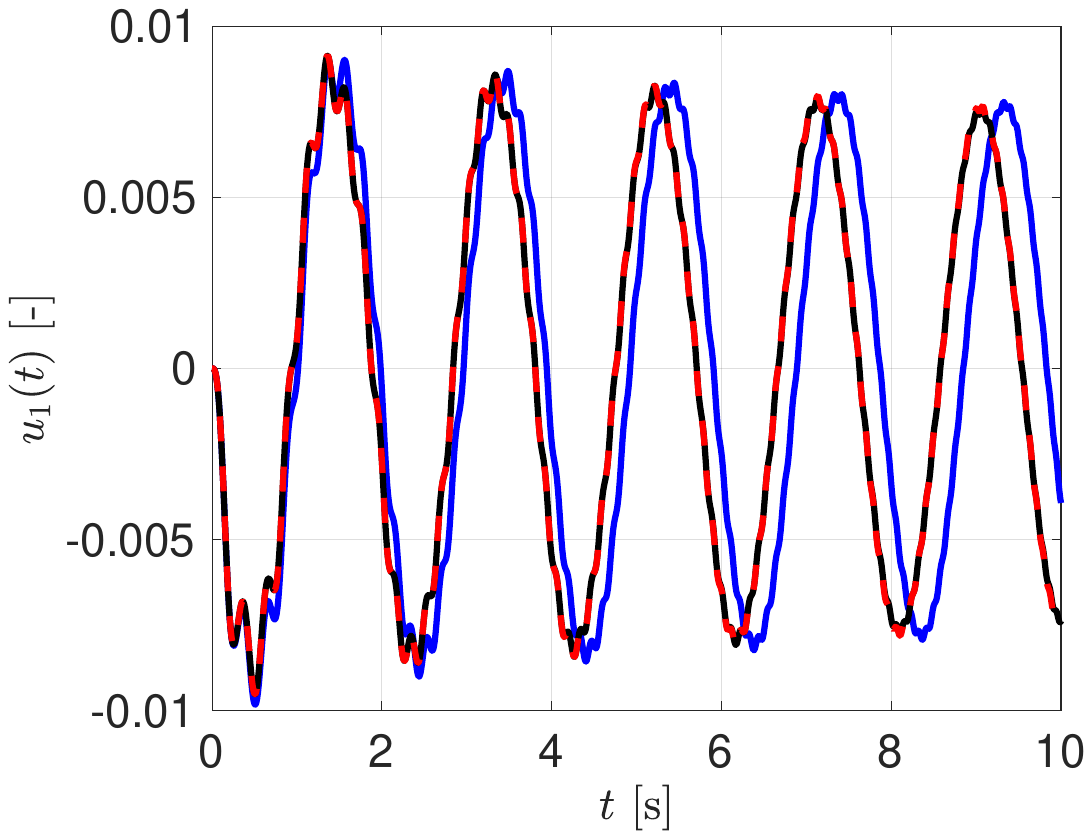}
    \caption{}
    \label{fig:u1}
\end{subfigure}
\hfill
\begin{subfigure}{0.45\textwidth}
    \includegraphics[width=\textwidth]{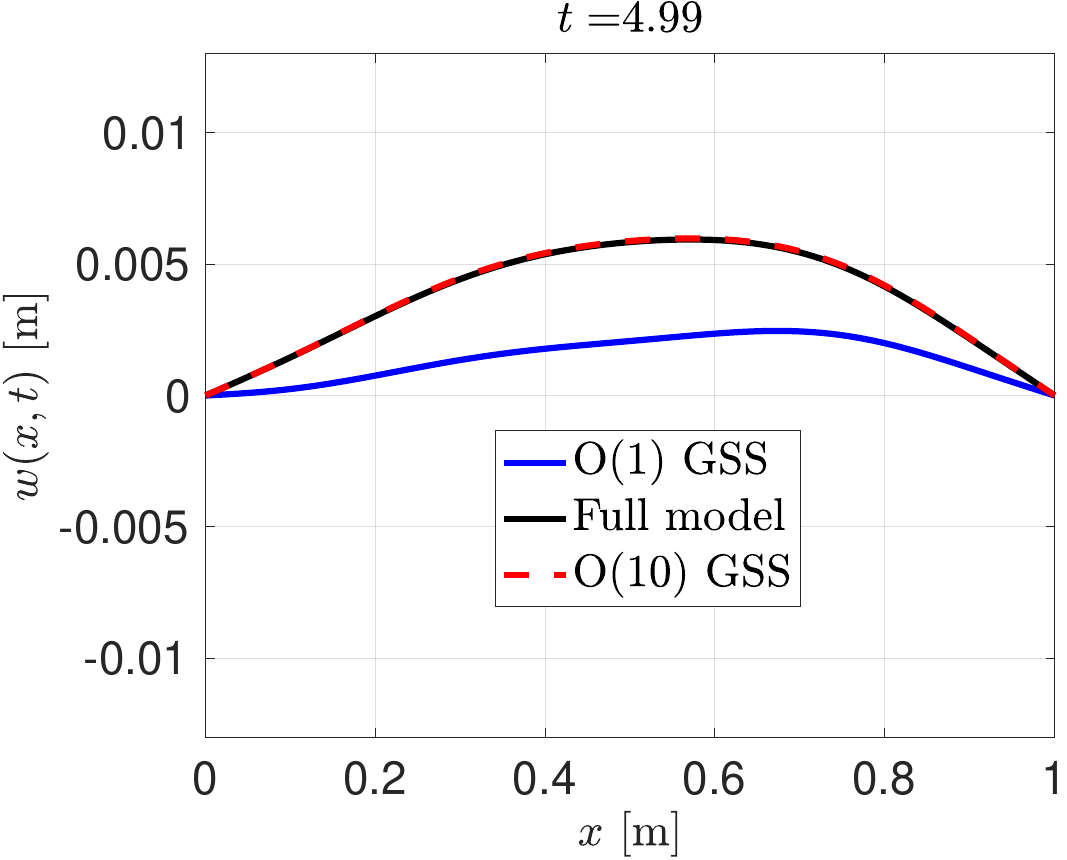}
    \caption{}
    \label{fig:snap1_AB}
\end{subfigure}
\hfill 
\begin{subfigure}{0.45\textwidth}
    \includegraphics[width=\textwidth]{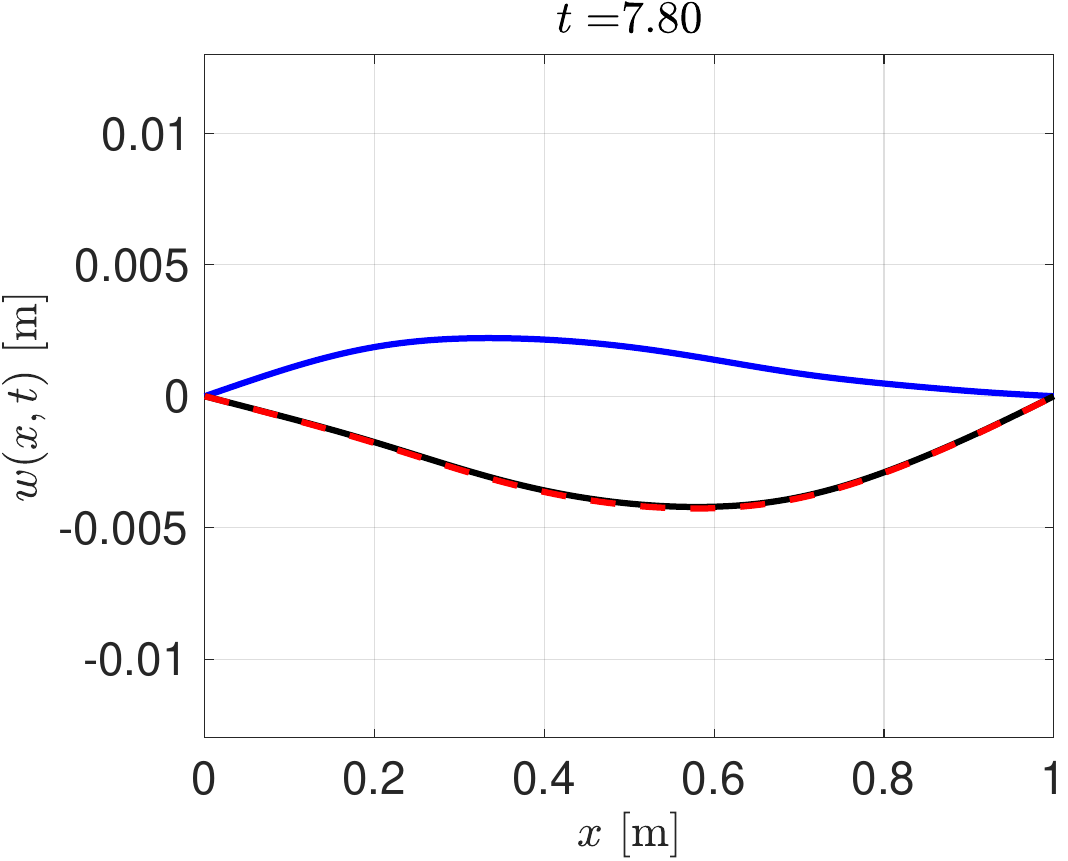}
    \caption{}
    \label{fig:snap2_AB}
\end{subfigure}
\caption{(a) Time evolution of the first Galerkin mode $u_1(t)$ under chirp base excitation for the full model (black), the $O(1)$ linear GSS Taylor approximation (blue) and the $O(10)$ GSS approximation (red). (b) Snapshot of the beam profile at $t \approx 4.99 \text{ [s]}$. (c) Snapshot of the beam profile at $t = 7.80 \text{ [s]}$.  }
\label{fig:AB_chirp}
\end{figure}

An $O(10)$ GSS approximation already suffices to capture the full model response accurately. Figure. \ref{fig:u1} compares this approximation with the $O(1)$ GSS for the $u_1$ Galerkin mode. We also plot snapshots of the full beam profile for $t=4.99 \text{ [s]}$ and $t=7.80 \text{ [s]}$ in Figs. \ref{fig:snap1_AB} and \ref{fig:snap2_AB}. The same figures highlight that higher-order correction terms to the GSS approximation become relevant for longer times when the nonlinear internal forces of the system become more active.

\begin{table}[h]
\centering
\caption{Run time comparisons for axially moving beam}
\label{tab:runtime_AB}
\begin{tabular}{lcc}
\toprule
 & Full model & $O(\mathrm{10})$ GSS \\
\midrule
Run times  & 4.8 s  & \textbf{1.5 s} \\
\bottomrule
\multicolumn{3}{c}{All the simulations are run on a M4 Macbook Pro 24 GB RAM laptop.} \\
\bottomrule
\end{tabular}
\end{table}
In this example, we have computed the GSS using the formulas listed in Section \ref{subsec:general_damping} without the modal truncation step. We list computational times in Table \ref{tab:runtime_AB} for the $O(10)$ GSS Taylor approximation using \textit{GSSTool} and for the time integration of the full model. We note that \textit{GSSTool} is more than three times faster than the full model simulation.

\subsection{Example 2: Randomly excited oscillator chain}
\label{subsec:e2}
We now consider an oscillator chain comprising $n = 20$ masses connected via nonlinear springs and viscous dashpots (see Fig. \ref{fig:oscillator_chain_setup}a). All masses in the chain are set to $0.1 \text{ } [\mathrm{kg}]$. The springs have a linear stiffness of $100 \text{ [N/m]}$ and a cubic nonlinear stiffness of $2,500 [\text{ N/m$^3$}]$. The damping of the viscous dashpots is set to $0.1 \text{ [Ns/m]}$. 
The start and end of the chain are forced randomly with a 100 second long signal sampled from a Gaussian distribution with standard deviation $\sigma = 1.54$ and mean $0$ (see Fig. \ref{fig:oscillator_chain_setup}b). Further, the Gaussian forcing is filtered by removing all frequency content that is larger than $7.5 \text{ [Hz]}$. This frequency value corresponds to the slowest mode of vibration of the unforced chain.  The maximum magnitude of the forcing realizations used in this example is $\Delta = 2.8 \text{ [N]}$.   
\begin{figure}[H]
    \begin{centering}
    \includegraphics[width=1\textwidth]{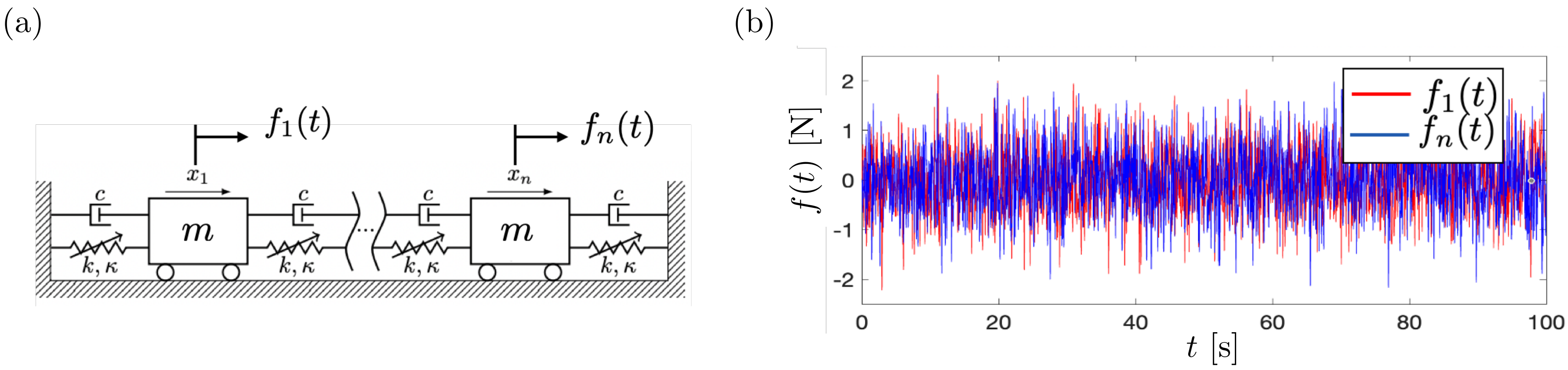}
    \par\end{centering}
    \caption{(a) Oscillator chain setup. (b) Gaussian random forcing signals with zero mean and standard deviation $\sigma = 1.54$}
    \label{fig:oscillator_chain_setup}
    \end{figure}

For exactly the same configurations and forcing, \citet{simpson2021} fit a $9$-dimensional autoencoder (AE) mapping and a Long Short-Term Memory (LSTM) network to model the chain dynamics using forced data. The input data for their fitting procedure is the full positional data of the first $50 \%$ of the forced response of the system obtained from applying a single random realization of the Gaussian forcing to the chain. They proceed by testing their model on the remaining half of the forced response. We detail their methodology and our implementation of it in Appendix \ref{app:lstm}. 

In Fig.\ref{fig:oc_test_train}a, we plot the $O(10)$ GSS approximation and the AE+LSTM prediction in the 80-90 second time window for the same realization of the forcing. The GSS is computed on the full time window of the forcing realization using integral formulas for the structural damping case presented in Section \ref{subsec:structural_damping}. The $O(10)$ GSS accurately matches the full model. The AE+LSTM model correctly predicts the trends in the response but does not perfectly match the full model. This is expected as the GSS is computed using the full model equations, whereas the AE+LSTM is a purely data-driven model. 

\begin{table}[h]
\centering
\caption{Run time comparisons for the oscillator chain model}
\label{tab:runtime_oschain}
\begin{tabular}{lccc}
\toprule
 & Full model & $O(\mathrm{10})$ GSS & Autoencoder+LSTM (9D) \\
\midrule
\multirow{2}{*}{Run times} & \multirow{2}{*}{7.1 s} & \multirow{2}{*}{\textbf{2.5 s}} & Training: 20 min \\
                           & & & Testing: 3.4 min \\
\bottomrule
\multicolumn{4}{c}{All the simulations are run on a M4 Macbook Pro 24 GB RAM laptop.} \\
\bottomrule
\end{tabular}
\end{table}

\begin{figure}[H]
\centering
\begin{subfigure}{0.48\textwidth}
    \includegraphics[width=\textwidth]{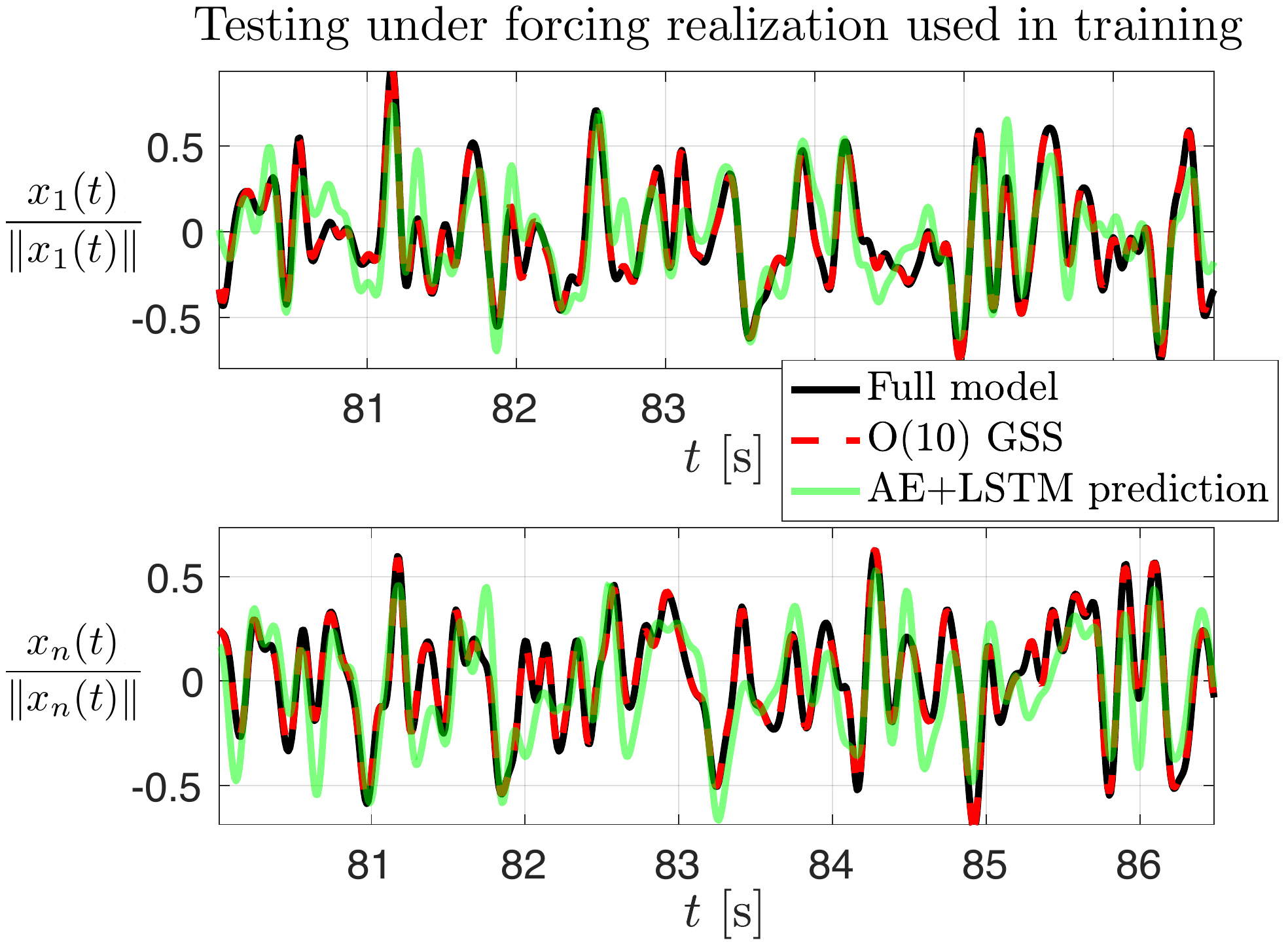}
    \caption{}
    \label{fig:training_result}
\end{subfigure}
\hfill
\begin{subfigure}{0.48\textwidth}
    \includegraphics[width=\textwidth]{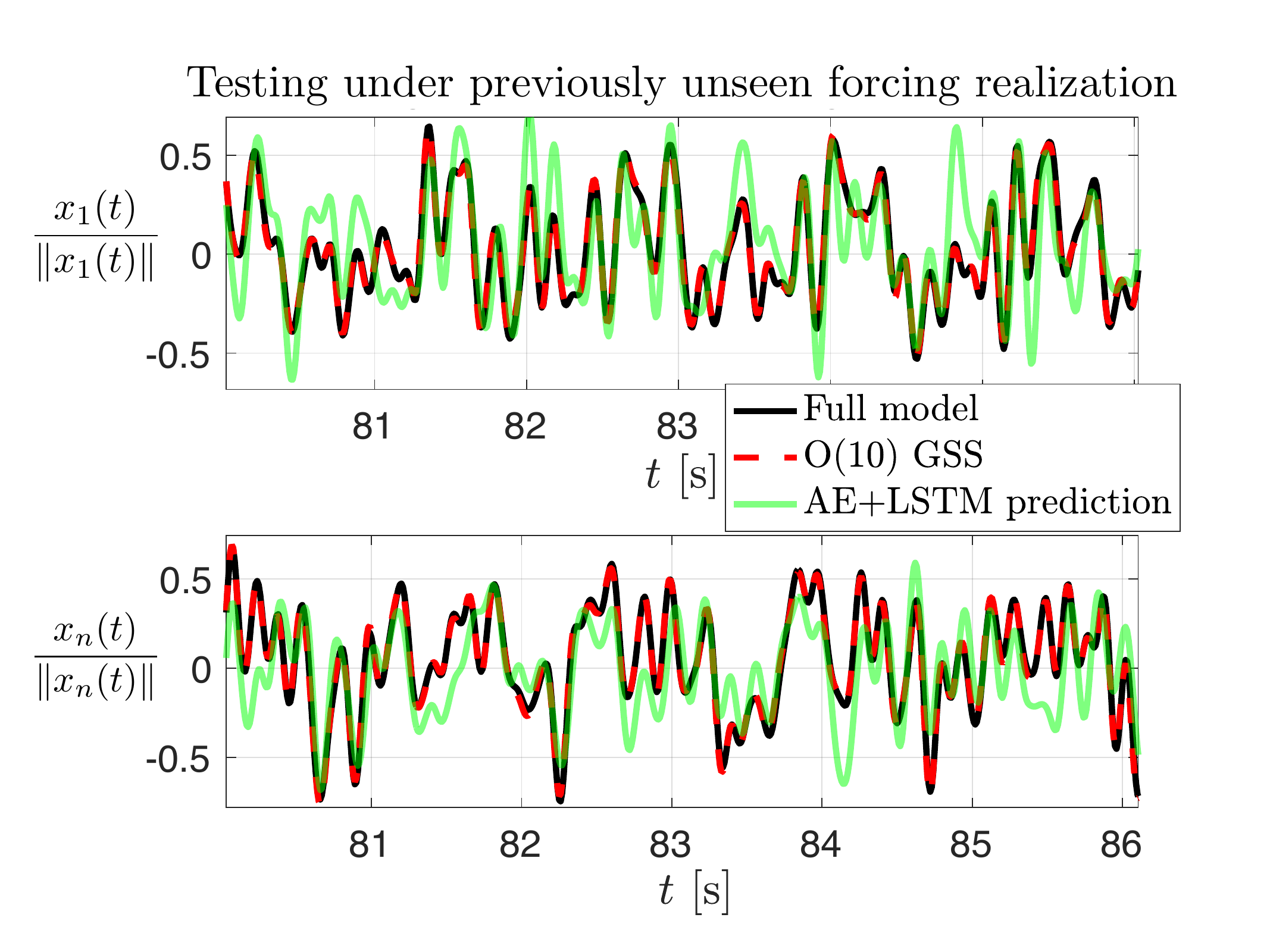}
    \caption{}
    \label{fig:testing_result}
\end{subfigure}
\caption{(a) Normalized displacement trajectories of the first (top) and last (bottom) masses in the chain for the full model (black), the $O(10)$ GSS (red) and the autoeconder+LTSM model (green). (b) Same as in (a) but for a previously unseen random forcing realization generated from the same distribution used in the training of the autoencoder+LSTM model.}
\label{fig:oc_test_train}
\end{figure}

Next, we test the predictive power of the data-driven AE+LSTM model and our GSS approximations for a previously unseen forcing realization from the same Gaussian distribution used in training. From Fig. \ref{fig:oc_test_train}b, we observe that the GSS still matches the response of the full model with high accuracy. In contrast, the AE+LSTM model predicts overall trends in phase but fails at most time instances in predicting the amplitudes of the true response. 

This loss in accuracy by the AE+LSTM model is not compensated by computational run times. Table \ref{tab:runtime_oschain} highlights that training an 9D autoencoder + LSTM model requires 20 minutes and the learned model takes 3.4 minutes to run on an unseen forcing data. This is to be contrasted with the $O(10)$ GSS approximation, which requires only 2.5 seconds to run to provide an accurate prediction for the forced response.

\begin{figure}[H]
\centering
\begin{subfigure}{0.48\textwidth}
    \includegraphics[width=\textwidth]{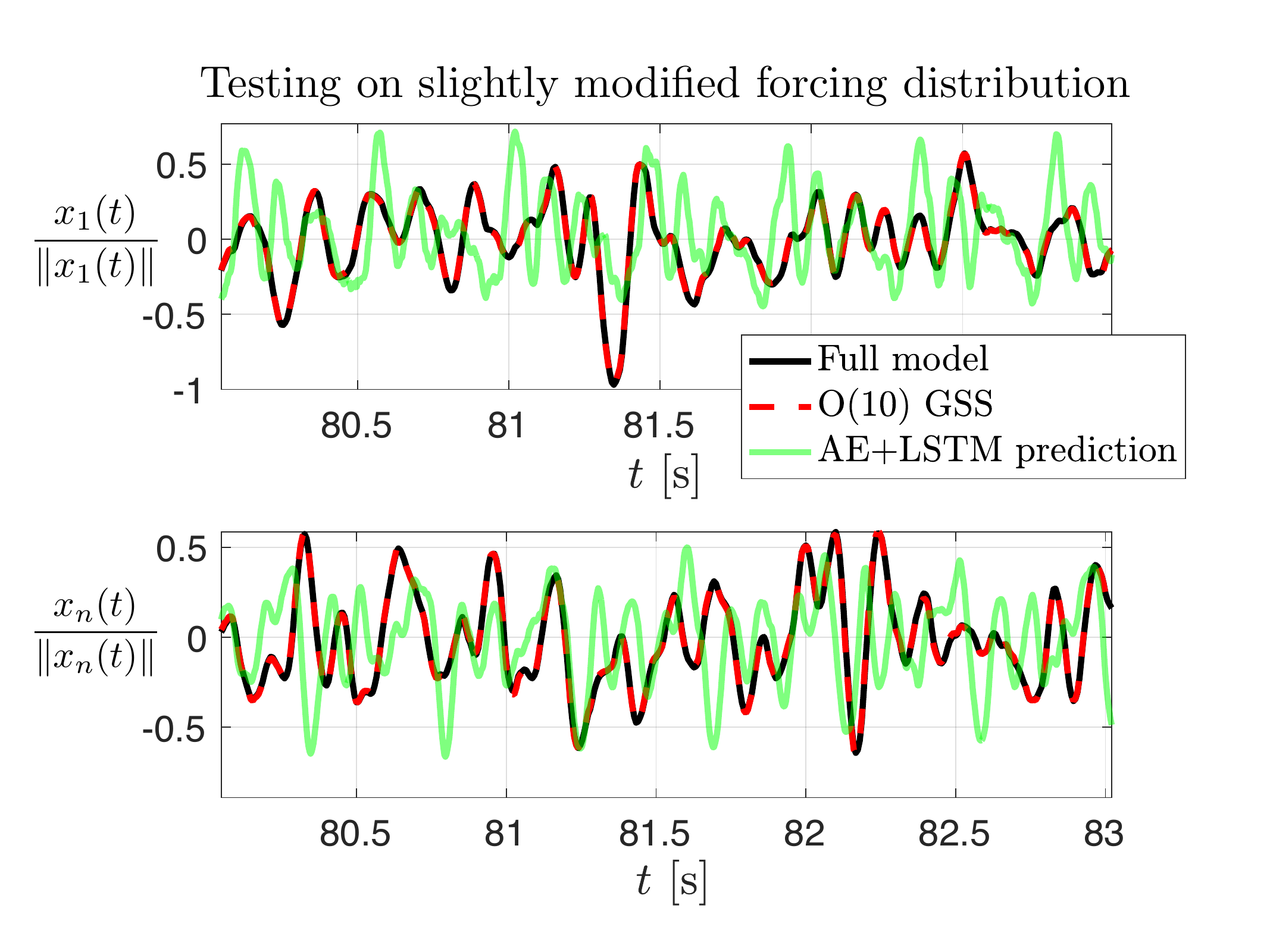}
    \caption{}
    \label{fig:unseen_new}
\end{subfigure}
\hfill
\begin{subfigure}{0.48\textwidth}
    \raisebox{1.5cm}{\includegraphics[width=\textwidth]{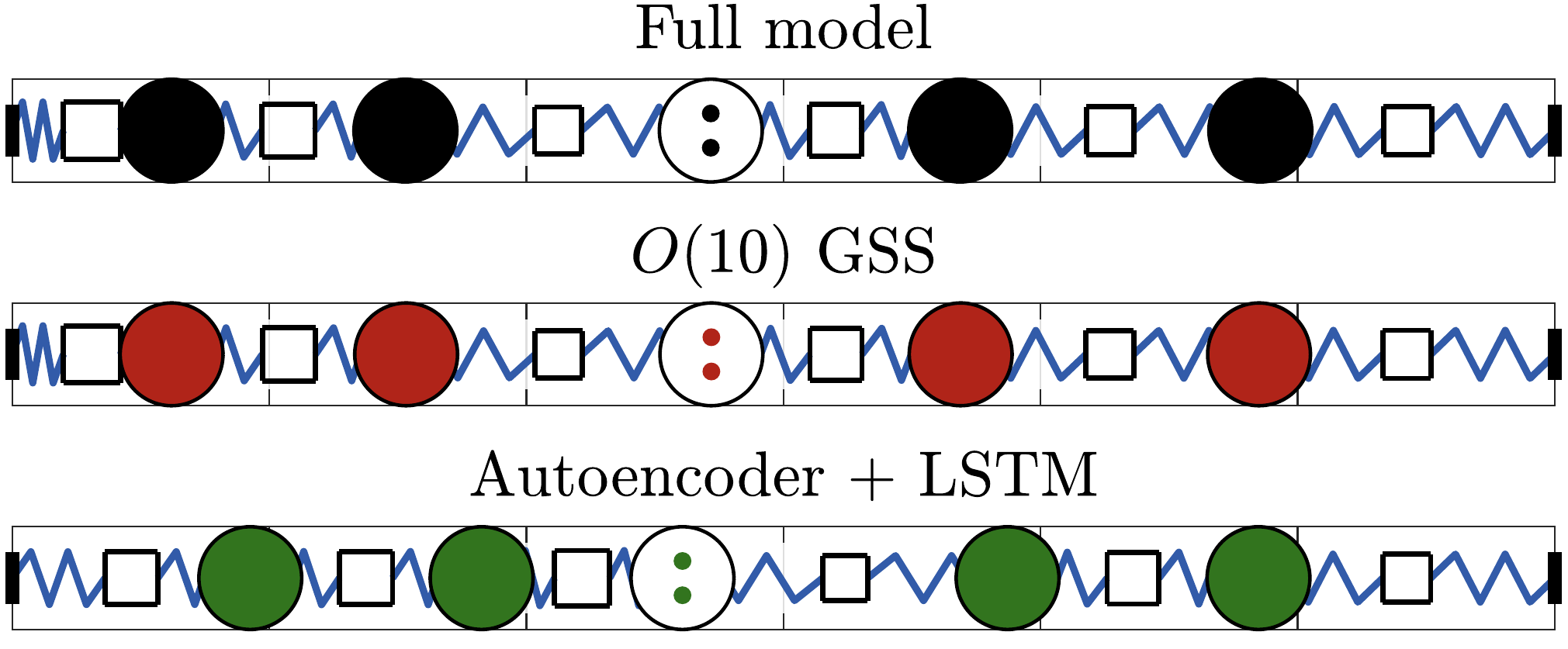}}
    \caption{}
    \label{fig:setup_snap_OC}
\end{subfigure}
\caption{(a) Normalized displacement trajectories of the first (top) and last (bottom) masses in the chain under a modified Gaussian distribution with zero mean and standard deviation $\sigma = 1.8$, for the full model (black), the $O(10)$ GSS (red) and the autoeconder + LTSM model (green). (b) Snapshot of the oscillator chain at $t = 80 \text{ }\mathrm{[s]}$. The middle mass block represents the center of mass position of the rest of the $16$ masses in the chain.}
\label{fig:oc_new_unseen}
\end{figure} 

% Note, in all our comparisons with the AE+LSTM model, we only present the plots in the normalized coordinates of the full model as the AE+LSTM model is trained on normalized trajectory data. Hence does not retain or provide a scaling variable to rescale the predictions to the true values. The unseen forcing realization, will demand a new normalization transformation, this is apriori unknown to us, unless you have the full response. This is counter productive to what the AE+LSTM model set out to do. This also explains the reason for now observing dip in amplitude accuracy prediction. The trained AE+LSTM model is biased to the magnitudes of the forced response of a single realization of forcing. 

We close this section by comparing the GSS and the AE+LSTM model predictions (see Fig. \ref{fig:oc_new_unseen}) for forcing generated from a modified zero-mean Gaussian distribution with standard deviation $\sigma =1.8$. This slightly modified distribution is filtered to contain frequencies up to $40 \text{ [Hz]}$. We observe that the trained AE+LSTM model is not robust with respect to this slight statistical change of the forcing distribution. In contrast, the $O(10)$ GSS approximation still accurately predicts the full model response accurately with minimal computational effort. 

We stress that the AE+LSTM model is fully data-driven, whereas the GSS computation presented here utilizes knowledge of the terms in the equations of motion.

\subsection{Example 3: Partially data-driven computation of the GSSs for the nonlinear oscillator chain}
\label{subsec:e3}

We now revisit the nonlinear oscillator chain in Example 2 and assume that only the linear part of the system is known explicitly but trajectory data is available for the full nonlinear system in the absence of forcing. This is a realistic setting for structural vibration problems whose linear behavior can be accurately modeled, but their nonlinear characteristics have to be inferred from experimental data. 

We generate numerically two unforced decaying trajectories launched using random initial conditions in the phase space of the system shown in Fig. \ref{fig:ssm_osci_chain}. From one of these trajectories, we extract a two-dimensional slow SSM using \textit{SSMLearn} in the phase space of the oscillator chain. We find an $O(11)$ polynomial SSM approximation to accurately predict the second decaying trajectory that was not used in the SSM construction (see Figs. \ref{fig:ssm_osci_chain},b). The SSM parametrization and reduced dynamics for the unforced system take the form given in eq. (\ref{eq:reduced_SSM}).

At leading order, the contribution to the SSM dynamics can be obtained by projecting the forcing to the tangent space of the unforced SSM at the origin. Under forcing, the parametrized SSM deforms in time, but remains anchored to the GSS. These leading order terms in eq. (\ref{eq:reduced_SSM}) involve the spectral projections of the external forcing vector. We already have these spectral projections at our disposal from the linear analysis of the unforced system. We calculate GSS Taylor approximations with the forced SSM-reduced model and lift the reduced GSS solution found within the SSM to the full phase space using the learned SSM parametrization map.

\begin{figure}[H]
\centering
% Top row
\begin{subfigure}{0.45\textwidth}
    \centering
    \includegraphics[width=\textwidth]{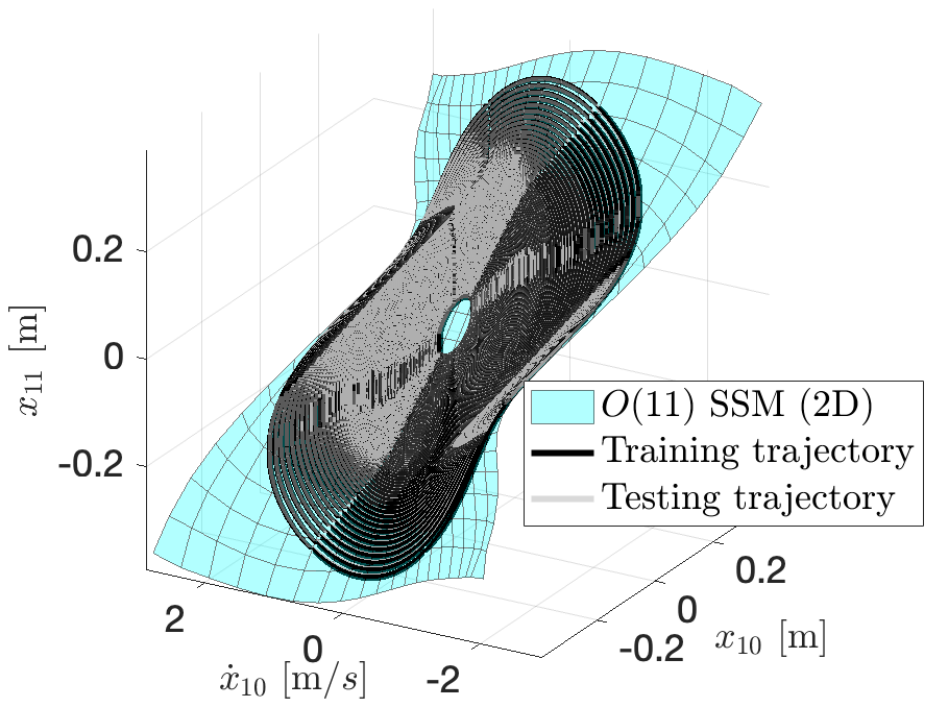}
    \caption{}
    \label{fig:ssm_osci_chain}
\end{subfigure}
\hfill
\begin{subfigure}{0.5\textwidth}
    \centering
    \includegraphics[width=\textwidth]{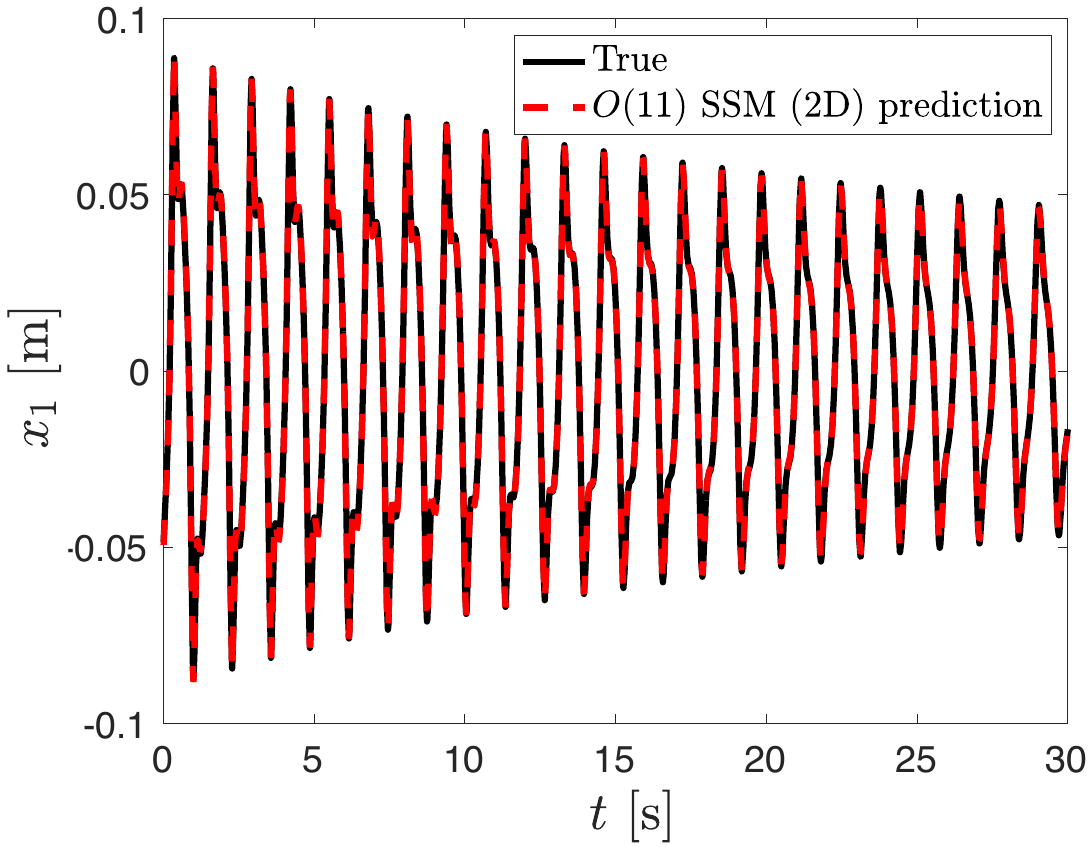}
    \caption{}
    \label{fig:period3_qp_shell}
\end{subfigure}
\hfill
\begin{subfigure}{0.5\textwidth}
    \centering
    \includegraphics[width=\textwidth]{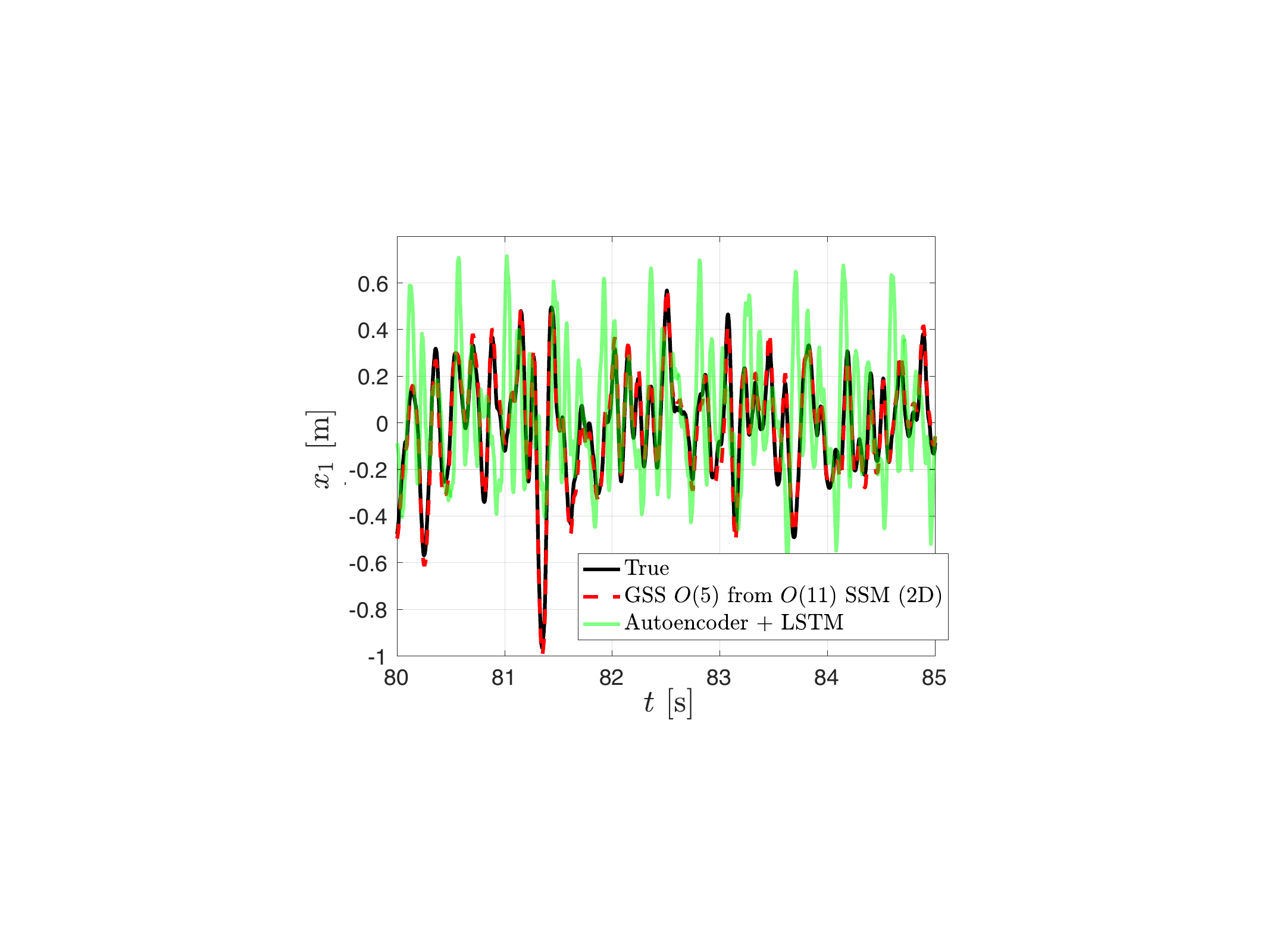}
    \caption{}
    \label{fig:x1_anchor_response}
\end{subfigure}
\caption{(a) Unforced 2D SSM of oscillator chain learned from data. (b) Reduced prediction using the SSM-reduced model (red) compared with the full model (black). (c) Normalized $x_1$ response for an $O(5)$ GSS of the forced SSM-reduced model (red), Autoencoder+LSTM model (green) and the full model (black). The stochastic forcing used in this example is the same as the one used to generate Fig. 5.}
\label{fig:SSM_OC_main}
\end{figure}

In Fig. \ref{fig:x1_anchor_response}, we plot the GSS predicted from our partially data-driven forced SSM-reduced model and the response of the fully data-driven autoencoder+LSTM model for the forcing sampled from unseen Gaussian distribution discussed at the end of Section 4.2. We observe that the GSS prediction closely follows the full response of the system. 

In Fig. \ref{fig:SSM_OC_snaps}, we plot snapshots of the predicted GSS, the aperiodic SSM predictions, and the system's true response in the phase space. Figure \ref{fig:ssm_snap3} is the snapshot captured in the coordinates that represent the middle masses of the oscillator chain. In this frame, the SSM does not move much, but the forced prediction moves on the SSM. On the other hand, for the snapshots in Figs. \ref{fig:ssm_snap1} and \ref{fig:ssm_snap2}, the SSM parametrization is anchored to the GSS and translates along it.  

The fully data-driven autoencoder+LSTM model only predicts positions, so velocities were recovered by finite differencing. The predictions of the autoencoder+LSTM model are unaware of the SSM geometry and fail to capture the dynamics despite using an 9-dimensional latent space. Moreover, the model is expensive to simulate due to the dense network architecture of the LSTM. In contrast, our SSM-reduced model is only two-dimensional and comprises simple smooth polynomials. As shown in Table \ref{tab:smm_oc_runtimes}, it is 40 times faster to test and 150 times faster to train than the autoencoder+LSTM model.

% Bottom row
\begin{figure}[H]
\centering
\begin{subfigure}{0.45\textwidth}
    \centering
    \includegraphics[width=\textwidth]{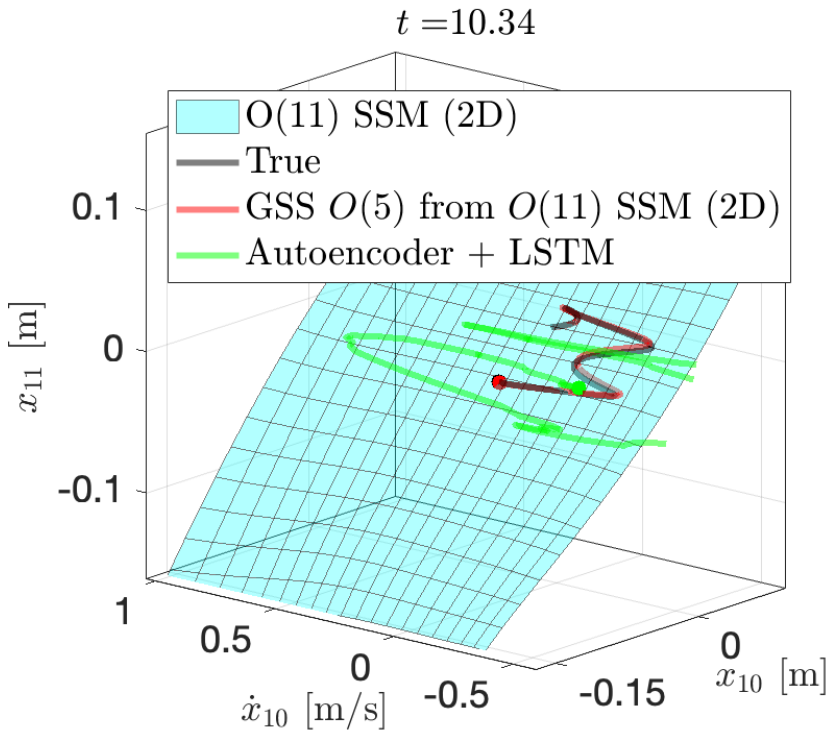}
    \caption{}
    \label{fig:ssm_snap3}
\end{subfigure}
\hfill
\begin{subfigure}{0.5\textwidth}
    \centering
    \raisebox{0.8cm}{\includegraphics[width=\textwidth]{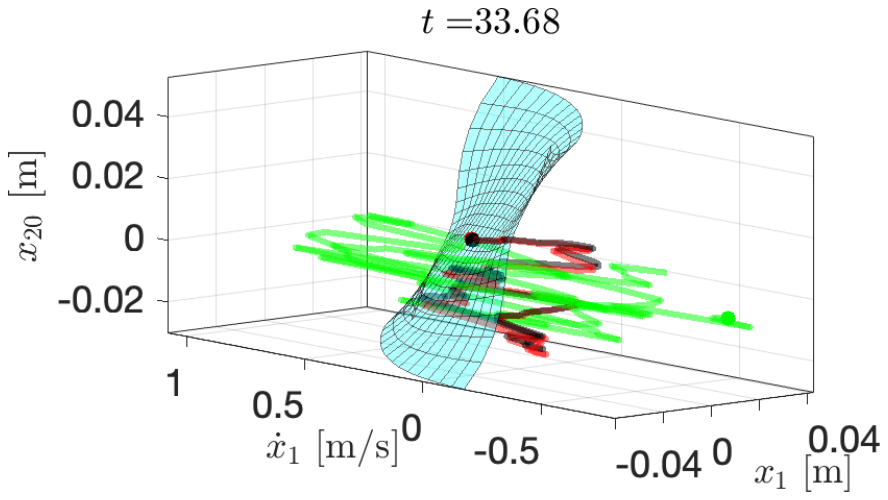}}
    \caption{}
    \label{fig:ssm_snap1}
\end{subfigure}
\hfill
\begin{subfigure}{0.5\textwidth}
    \centering
    \raisebox{0.8cm}{\includegraphics[width=\textwidth]{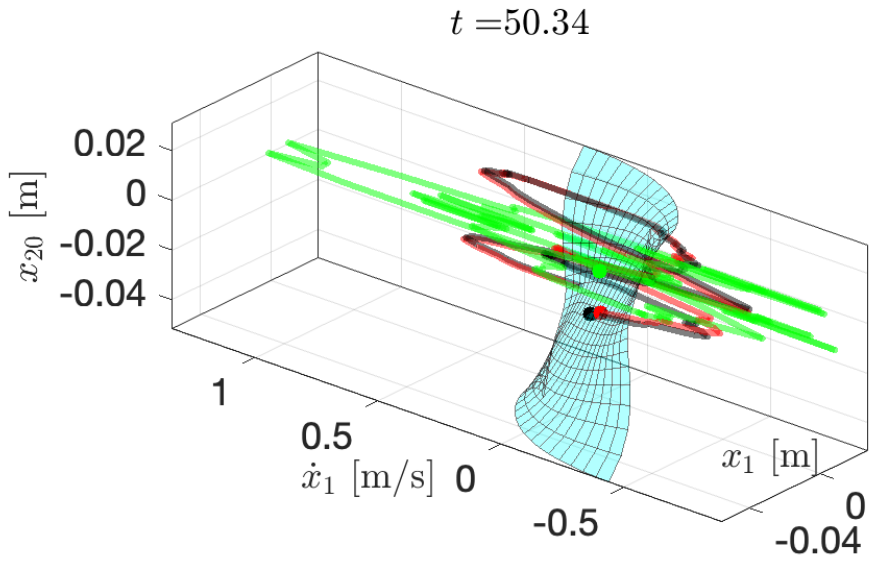}}
    \caption{}
    \label{fig:ssm_snap2}
\end{subfigure}

\caption{Aperiodically forced SSM (cyan) snapshots in the phase space with trajectory plots of the GSS prediction (red), the autoencoder+LSTM prediction (green), and the true response (black). (a) Snapshot at $t=10.34 \text{ }[\mathrm{s}]$  in the $x_{10}$, $\dot{x}_{10}$ and $x_{11}$ coordinates. Snapshots at  (b) $t=33.68 \text{ }[\mathrm{s}]$ and  (c)  $t=50.34 \text{ }[\mathrm{s}]$ in the $x_{1}$, $\dot{x}_{1}$ and $x_{20}$ coordinates.}
\label{fig:SSM_OC_snaps}
\end{figure}

\begin{table}[h]
\centering
\caption{Run time comparisons for the SSM-reduced model for the oscillator chain}
\label{tab:smm_oc_runtimes}
\begin{tabular}{lccc}
\toprule
 & Full model & SSM (2D) + $O(\mathrm{5})$ GSS & Autoencoder + LSTM (9D) \\
\midrule
\multirow{2}{*}{Run times} & \multirow{2}{*}{7.1 s} & Training: 8 s & Training: 20 min \\
                           & & Testing: \textbf{5 s}   & Testing: 3.4 min \\
\bottomrule
\multicolumn{4}{c}{All the simulations are run on a M4 Macbook Pro 24 GB RAM laptop.} \\
\bottomrule
\end{tabular}
\end{table}

\subsection{Example 4: Earthquake response of a cantilevered von K\'arm\'an beam}

Our next example is a finite-element model of a beam satisfying the von K\'arm\'an strain hypothesis. The model involves an upright cantilevered beam mounted to the ground, as shown in Fig. \ref{fig:setup_beam}. The beam is made of steel of density $7850 \text{ [kg/m$^3$]}$, width $0.01 \text{ [m]}$, breadth $ 0.05 \text{ [m]}$, height $2 \text{ [m]}$, Young's modulus  $190\times 10^9 \text{ [Pa]}$, Poisson's ratio $0.3$, and viscous damping parameter $10^7 \text{ [Ns/m]}$. A detailed description of the finite-element model can be found in \citet{jain23}. Each node in the finite-element model comprises three degrees of freedom, the axial displacements denoted by $z$, the transverse displacements denoted by $x$, and the cross-sectional angle denoted by $\alpha$. 

The beam is subjected to a realistic earthquake excitation by ground motion. For which we use historical ground acceleration data from the PEERs database hosted by the University of Berkeley \citet{PEER_NGA_West2}. Specifically, we consider a recorded acceleration data for a magnitude 6 earthquake event in Parkfield, California, 1966. This is a moderately large earthquake that is expected to cause significant vibrations for steel structural elements. The ground is shaken in the vertical and horizontal directions, thus impacting all the degrees of freedom of the beam (see Fig. \ref{fig:ground_accel}).

\begin{figure}[H]
\centering
\begin{subfigure}{0.45\textwidth}
    \includegraphics[width=\textwidth]{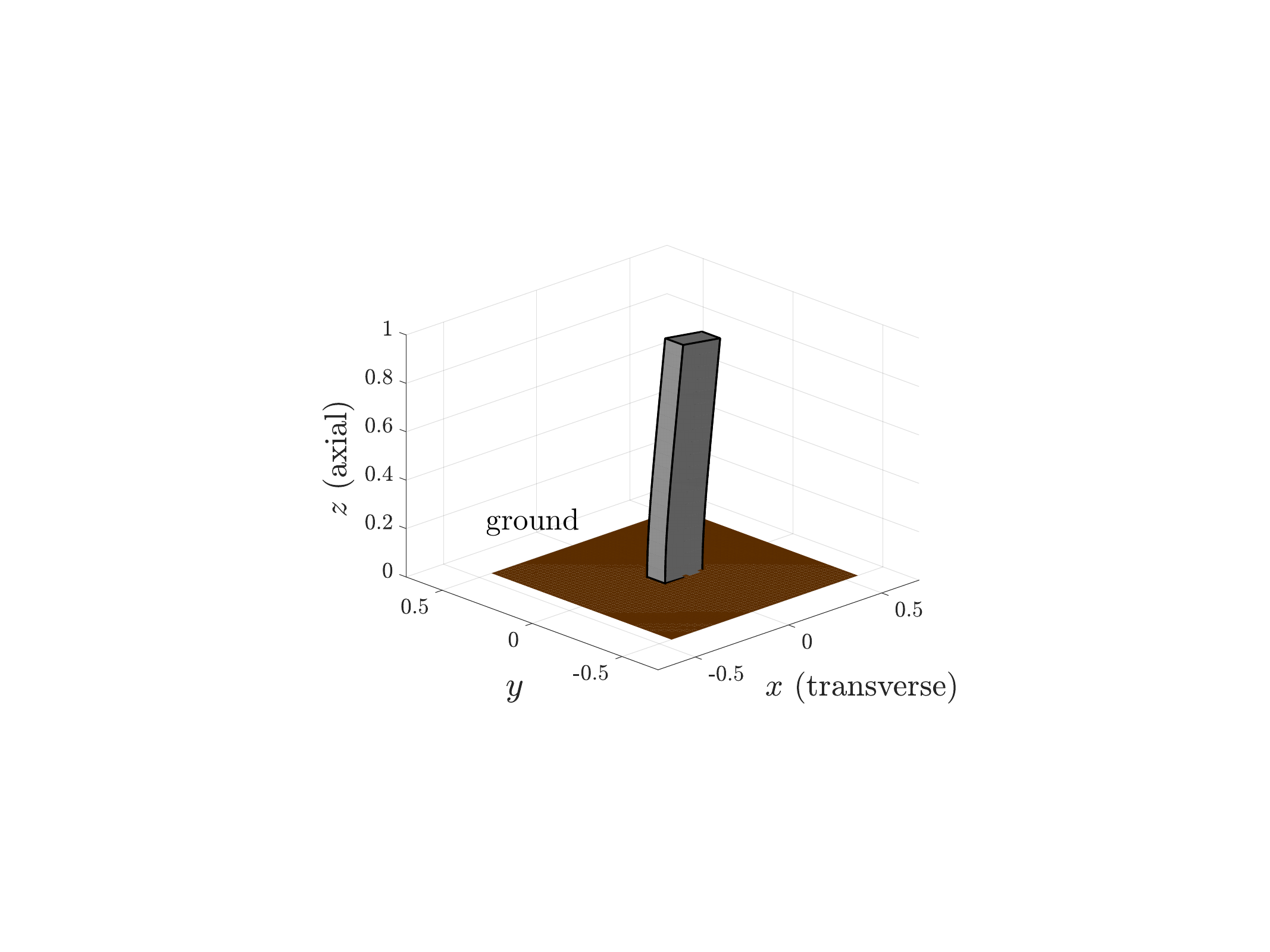}
    \caption{}
    \label{fig:setup_beam}
\end{subfigure}
\hfill
\begin{subfigure}{0.45\textwidth}
    \includegraphics[width=\textwidth]{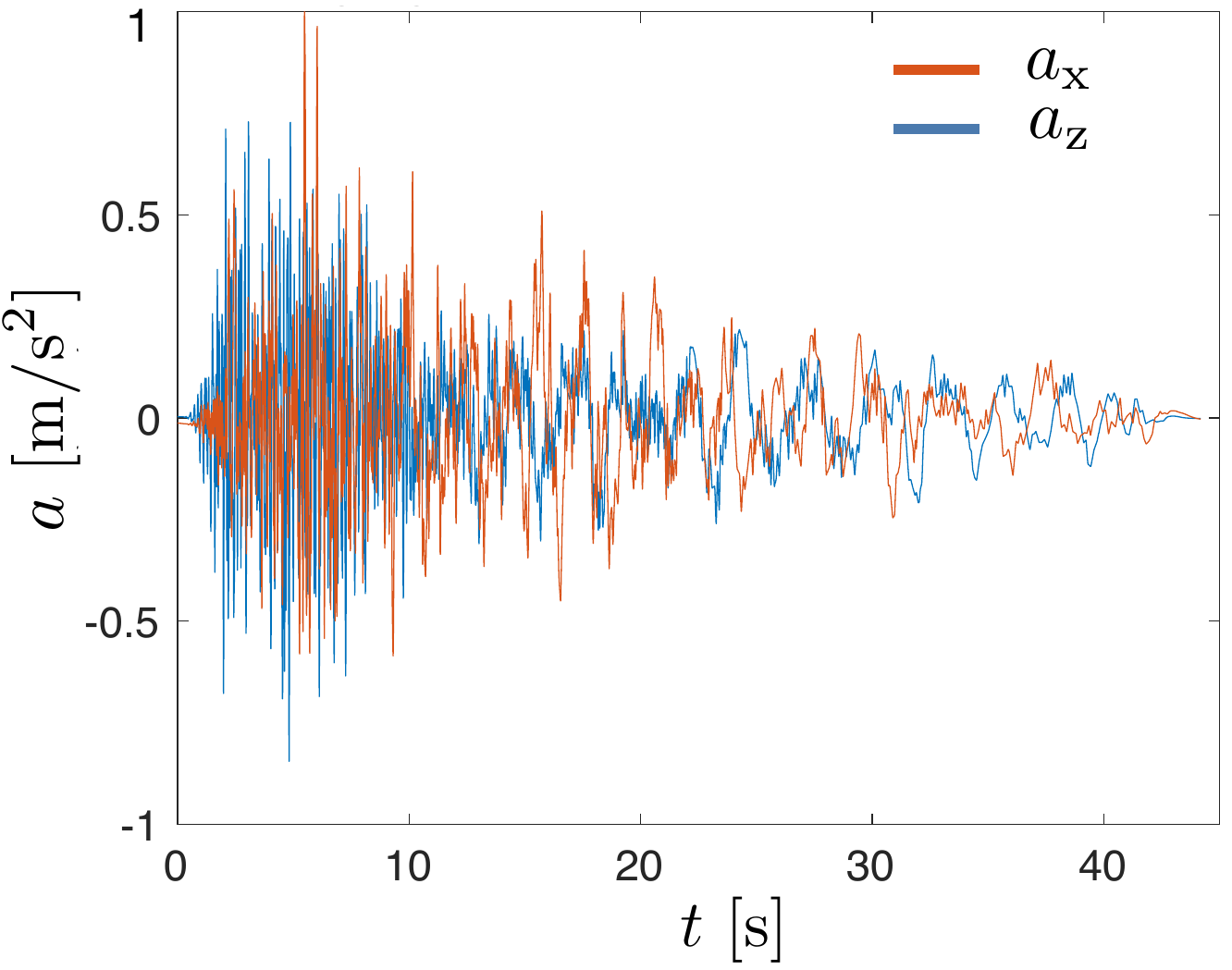}
    \caption{}
    \label{fig:ground_accel}
\end{subfigure}
\caption{(a) Cantilevered von K\'arm\'an beam mounted to the ground. (b) Historic horizontal $a_x$ and vertical $a_z$ ground acceleration data from the PEERs database for a 1966 earthquake in Parkfield, CA.}
\label{fig:beam_earthquake}
\end{figure}

 We compute an $O(5)$  GSS approximation for the beam model for an increasing number of degrees of freedom. We record the computational run times for the evaluation of the GSS using the integral formulas (\ref{eq:fund_integral}) and for the time integration of the full model. For comparison, we also include run times for computing the GSS approximation using numerical time integration. 

Figure \ref{fig:run_times_beam} shows the computational run times for the GSS method using the integral approach, for the GSS method using numerical time integration, and for the full model's response using  numerical time integration. The GSS method consistently achieves run times shorter than the direct numerical solution of the full nonlinear model. As the degrees of freedom model is increased, the run times for all methods gradually increase, but the GSS computations remain approximately three times faster than the full model.

% Bottom row
\begin{figure}[H]
\centering
\begin{subfigure}{0.47\textwidth}
    \centering
    \includegraphics[width=\textwidth]{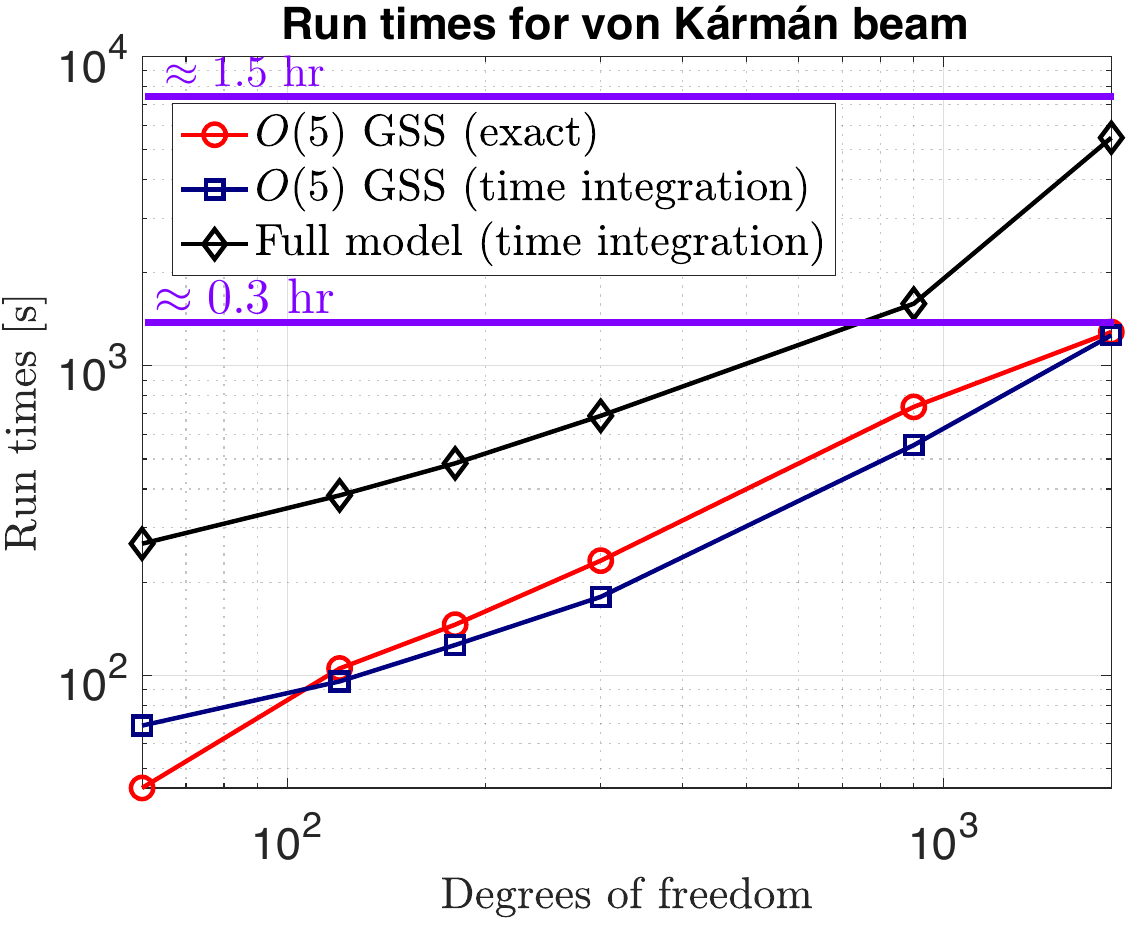}
    \caption{}
    \label{fig:run_times_beam}
\end{subfigure}
\hfill
\begin{subfigure}{0.5\textwidth}
    \centering
    \includegraphics[width=\textwidth]{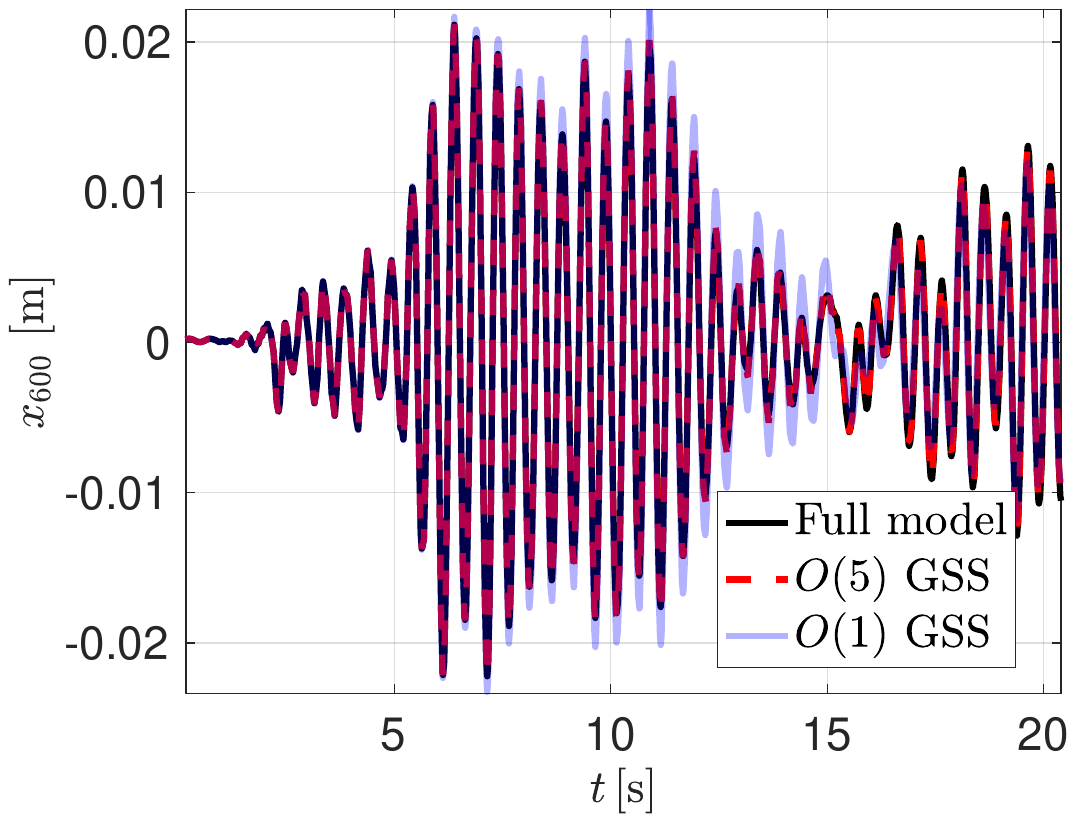}
    \caption{}
    \label{fig:transverse_disp_EQ}
\end{subfigure}
\begin{subfigure}{0.5\textwidth}
    \centering
    \includegraphics[width=\textwidth]{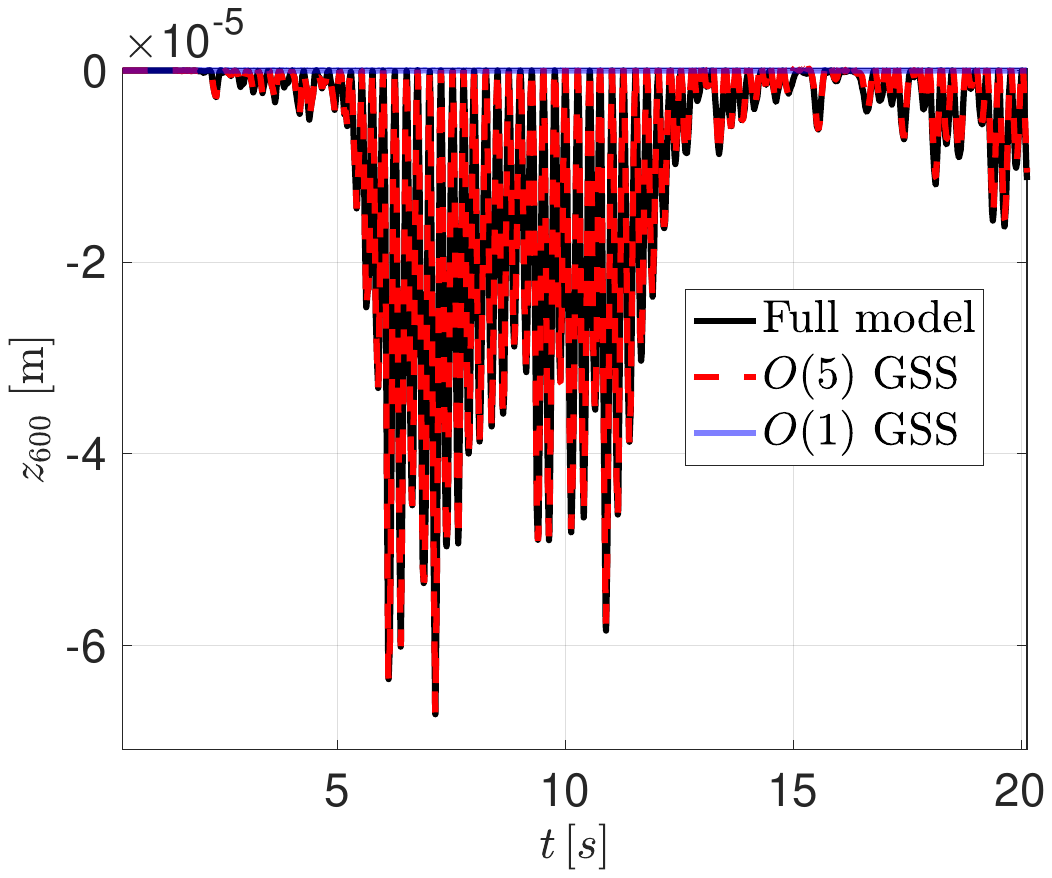}
    \caption{}
    \label{fig:axial_disp_EQ}
\end{subfigure}

\caption{(a) Run time plots with the vertical axis representing CPU runtimes on a M4 Macbook Pro 24 GB RAM laptop and the horizontal axis representing the degrees of freedom of the beam for the $O(5)$ GSS expansions calculated analytically (green) or using the Newmark Beta-method (blue) and the full model (black). (b) Transverse displacement of the free end and (c) axial displacement of the free end under earthquake response for a 1,800 DOF von K\'arm\'an beam for the full model (black), the $O(5)$ GSS (red) and the linear GSS (blue).}
\label{fig:Beam_EQ}
\end{figure}

In Figs. \ref{fig:transverse_disp_EQ} and \ref{fig:axial_disp_EQ}, for a 1,800 DOF finite element model of the beam, the $O(5)$ GSS approximation is compared with the full model's response. For completeness, we also include the linear GSS approximation results. Our approach requires 0.3 hours to compute compared to $1.5$ hours for the full model. The $O(5)$ GSS approximation is sufficient to match the response of the full model. The need for nonlinear corrections to the GSS becomes evident in the axial coordinates, due to nonlinear coupling with the transverse coordinates.

\subsection{Example 5: Stochastic response on a slow SSM-reduced model for the von K\'arm\'an beam}
\label{subsec:e5}
We adopt a similar strategy outlined in Section \ref{subsec:e3} by computing an SSM-reduced model using \textit{SSMLearn} for the cantilevered von K\'arm\'an beam. Specifically, we learn an $O(7)$ SSM-reduced model using a single unforced decaying trajectory of the system. The beam undergoes base excitation with a bounded Gaussian noise signal similar to the one used in \citet{Xu25}. 

Using the forced response of the beam under one realization of the forcing, we learn an $6\mathrm{D}$ autoencoder+LSTM model. We followed a similar learning algorithm as described in \citet{simpson2021}. Specific details on the training datasets and validation errors for this example can be found in the Appendix \ref{app:lstm}.   

We focus on testing our model and the autoencoder+LSTM model on an unseen realization of the forcing from the same noise model. We compute an $O(20)$ GSS approximation of the beam on the forced SSM-reduced model, which we later re-sum as a vector $\text{Pad\'e}[10,10]$ GSS approximation (see Section \ref{subsec:vector_pade}). For more detail on the accuracy comparisons of the Pad\'e and Taylor approximations of the GSS, see Appendix \ref{app:pade}.

\begin{figure}[H]
    \begin{centering}
    \includegraphics[width=1\textwidth]{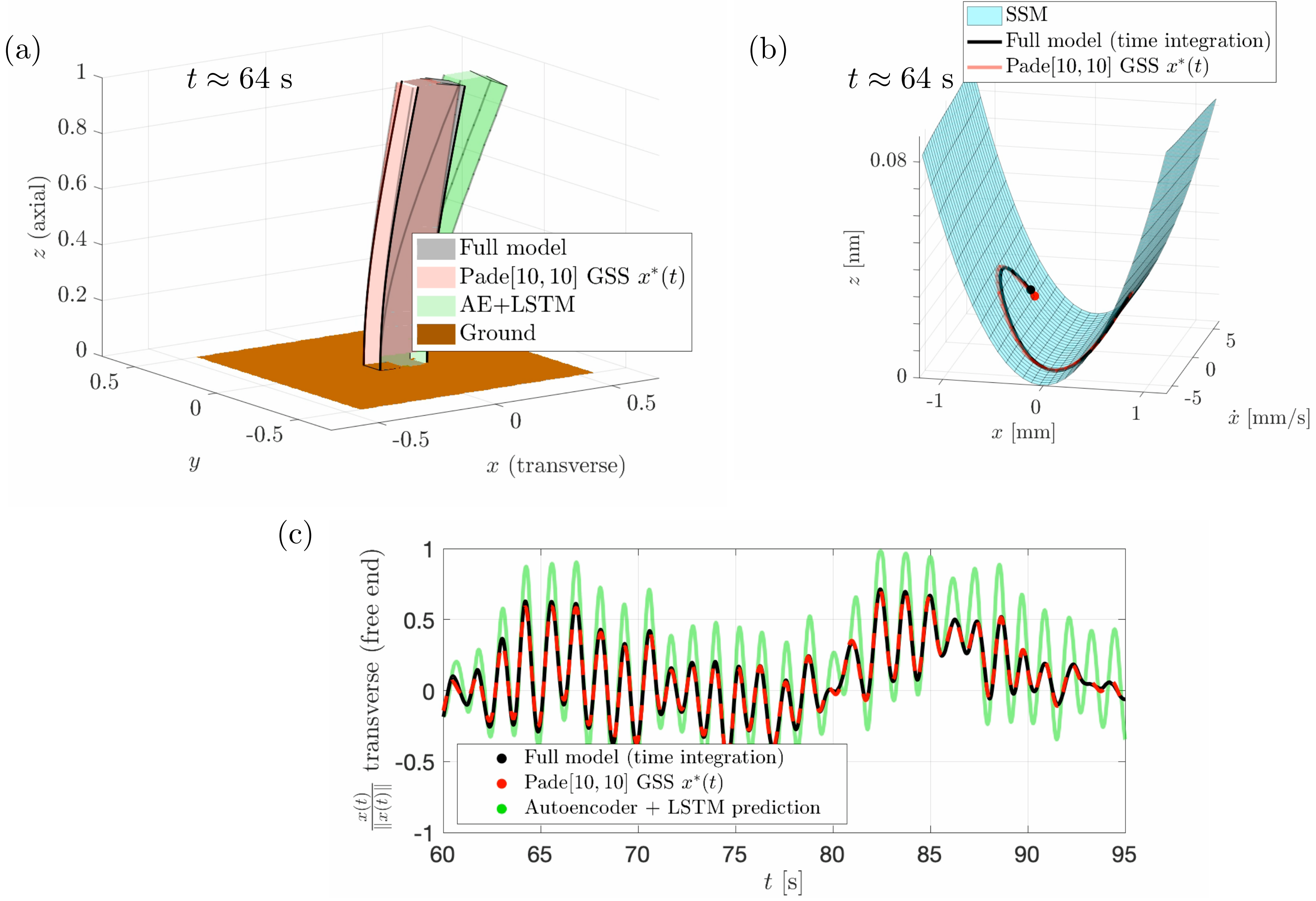}
    \par\end{centering}
    \caption{(a) Snapshot of the beam profiles for various models at $t \approx 64 \text{ }\mathrm{[s]}$. (b) SSM (cyan) snapshot at the same time with trajectory plots of the Pad\'e approximated GSS (red) and the full model (black). (c) Normalized transverse displacement response of the free end. The autoencoder (AE) + LSTM prediction is depicted in green in all the plots. The ground was forced horizontally in the $x$ direction with a random signal drawn from a filtered Gaussian noise model.}
    \label{fig:beam_SSM}
    \end{figure}

\begin{table}[h]
\centering
\caption{Run time comparisons for the SSM-reduced model for the von-K\`arm\`an beam}
\label{tab:pade_beam}
\begin{tabular}{lccc}
\toprule
 & Full model & SSM (2D) + $\text{Pad\'e}[10,10]$ GSS & Autoencoder + LSTM (6D) \\
\midrule
\multirow{2}{*}{Run times} & \multirow{2}{*}{3.14 min} & Training: 1 min & Training: 25 min \\
                           & & Testing: \textbf{3.3 s}   & Testing: 2.8 min \\
\bottomrule
\multicolumn{4}{c}{All the simulations are run on a M4 Macbook Pro 24 GB RAM laptop.} \\
\bottomrule
\end{tabular}
\end{table}

Figures \ref{fig:beam_SSM} a, \ref{fig:beam_SSM} b, plot temporal snapshots of the beam and the forced SSM-reduced predictions. We observe the Pad\'e GSS approximation accurately matches the full model response, while the autoencoder+LSTM predicts a larger response (see Fig.\ref{fig:beam_SSM}c). Added to the accuracy in prediction, the run-times in table \ref{tab:pade_beam} indicate that the SSM-based GSS approximation is 25 times faster in training and 50 times faster in testing compared to the autoencoder+LSTM method. The run times also validate the computational speed-ups obtained via the SSM-reduced model compared to the full model simulation. 

\subsection{Example 6: Curved von K\'arm\'an shell subject to pressure fluctuations}

Our final example involves a curved shell structure satisfying the von K\'arm\'an strain theory. SSM-reduction methods have already been used to predict the forced response of this shell under periodic forcing (see \citet{jain2022} and \citet{li22a}). The shell has a rectangular geometry with length $ L = 1 \text{ [m]}$ and breadth $ H = 0.5 \text{ [m]}$ and is simply supported along the breadth edges. The finite element model describing the shell has 1,386 degrees of freedom. Each element of the shell has 6 degrees of freedom.  We denote the in-plane positions by $x,y$, the out-off plane position by $z$ and the angles by $\alpha, \beta, \text{ and } \gamma$.  The shell is made of aluminum of density  $2700 \text{ [kg/m$^3$]}$, has a thickness of $ 0.01 \text{ [m]}$, Young's modulus $70 \times 10^9 \text{ [Pa]}$, Poisson's ratio $0.33$ and viscous damping $10^{10} \text{ [Ns/m]}$. The curvature of the shell is set by the ensuring that the shell covers a part of a spherical surface with radius $R = 0.01 \text{ [m]}$. The schematic of the shell is sketched in Fig.\ref{fig:shell_setup}a. We will plot predictions for the degrees of freedom associated with elements near points M $\left(\frac{L}{2},\frac{H}{2}\right)$ and A $\left(\frac{3L}{4},\frac{H}{4}\right)$. 

The external forcing is a uniform pressure field acting normal to the shell's surface. This forcing configuration is of practical interest in fluid-structure interaction problems, as it models the effect of pressure fluctuations, when the shell is submerged in a possibly turbulent fluid flow. To test our method across a range of forcing profiles, we consider cases wherein this pressure field is either quasi-periodic (see Fig. \ref{fig:shell_setup}b) of the form
\begin{equation}
g(t) = \sin(\Omega_1 t) + \sin(\Omega_2t),
\end{equation}
or chaotic of the form 
\begin{align}
\label{eq:rossler}
\dot{x} &= -y - z,  \\ \nonumber
\dot{y} &= x + a y,  \\
\nonumber
\dot{z} &= b + z(x - c),  \\ 
\nonumber
g(t) &= x(t)/|x|,
\end{align}
where the chaotic signal is the extracted from the R\"ossler system (see \citet{ROSSLER1976}) with parameters $a=0.2$, $b=0.2$, and $c=5.7$. This forcing is suitably normalized and rescaled to have a maximum forcing magnitude $\Delta = 121.8 \text{ [N]}$. For the quasi-periodic case, we will also ensure that the forcing signal is padded with $T_p = 100$ (see eq. (34)), since the forcing is non-zero for all times.

\begin{figure}[H]
    \begin{centering}
    \includegraphics[width=1\textwidth]{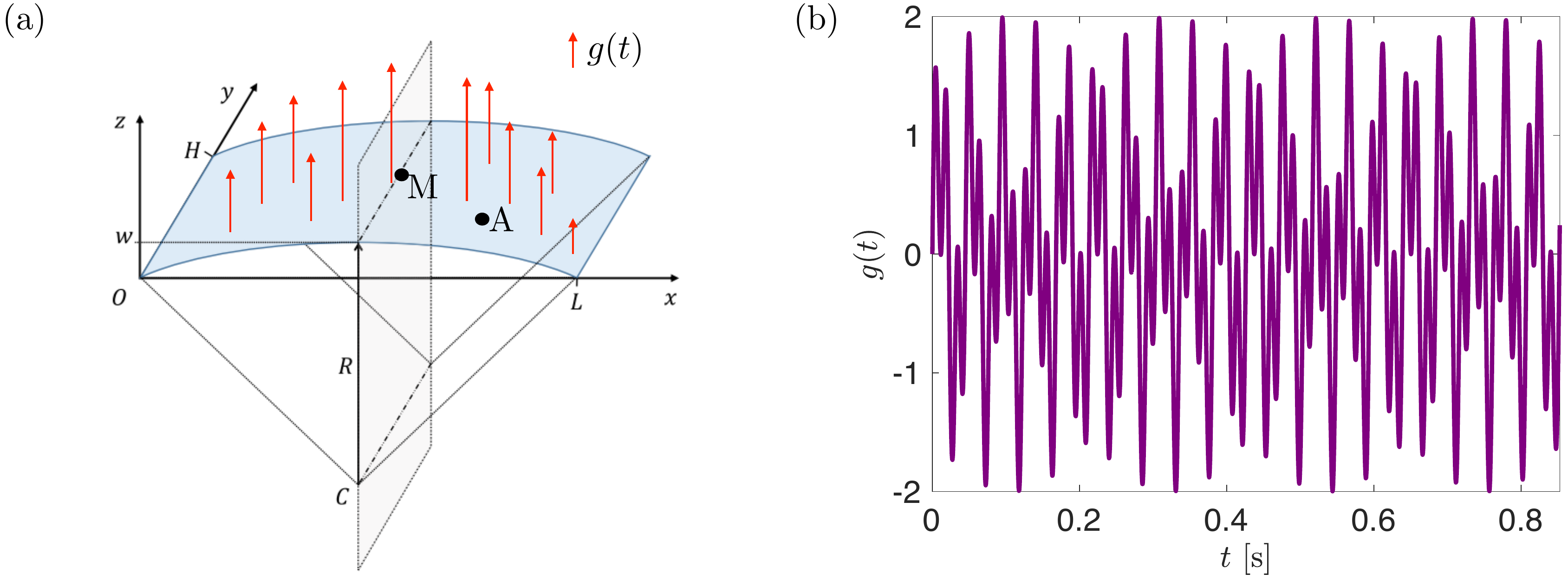}
    \par\end{centering}
    \caption{(a) Schematic of the von K\'arm\'an shell taken from \citet{jain2022}. (b) Quasi-periodic forcing signal applied uniformly across the shell.}
    \label{fig:shell_setup}
    \end{figure}

Due to the high dimensionality of the shell, we apply the mode selection criterion outlined in Section \ref{subsec:mode_select} to compute the GSS approximations. For quasi-periodic forcing with time spacing $\delta t = 10^{-4}$, the criterion retains the 200 slowest modes to ensure accurate evaluation of the integral formulas in the GSS (see Fig. \ref{fig:mode_select}). For the case of the chaotic forcing with time spacing $\delta t = 10^{-3}$, 150 slowest modes prove sufficient to maintain accuracy. 
   
\begin{figure}[H]
\centering
\begin{subfigure}{0.45\textwidth}
    \includegraphics[width=\textwidth]{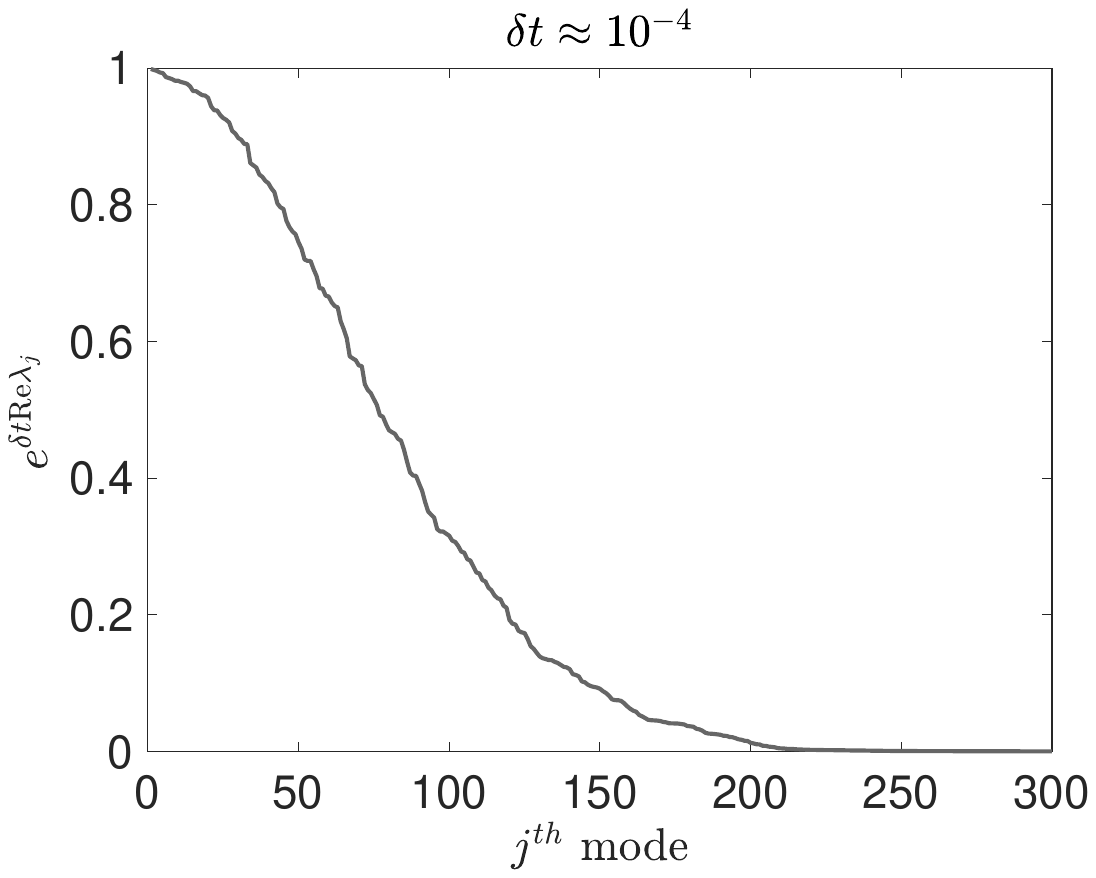}
    \caption{}
    \label{fig:mode_select}
\end{subfigure}
\hfill
\begin{subfigure}{0.5\textwidth}
    \includegraphics[width=\textwidth]{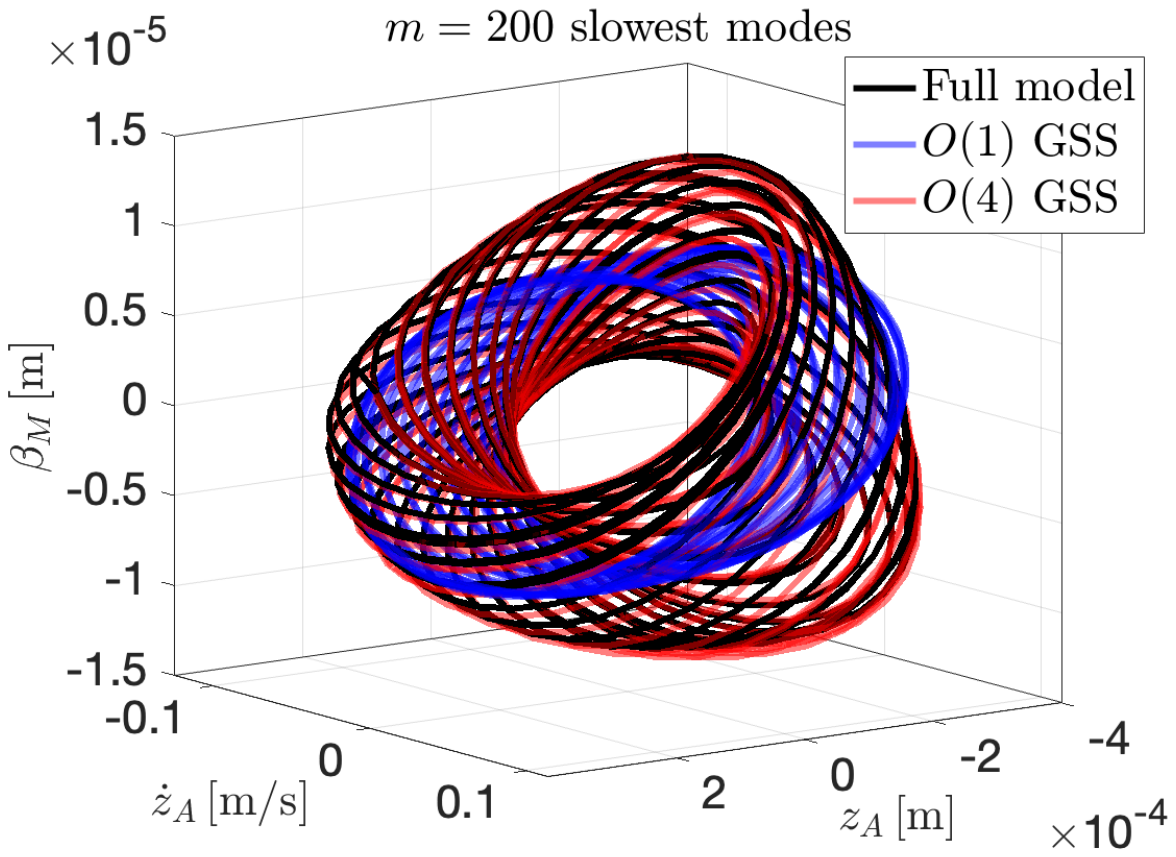}
    \caption{}
    \label{fig:qp_shell}
\end{subfigure}
\caption{(a) Plot of $e^{\delta t\mathrm{Re}\lambda_j}$ vs the $j^{th}$ mode, with modes ordered from slowest to fastest. (b) The forced response of the shell under uniform quasi-periodic forcing for the full model (black), the $O(1)$ GSS (blue) and the $O(4)$ GSS (red). The quasi-periodic signal used in this simulation has frequencies $\Omega_1 = \omega_1$ and $\Omega_2 = \pi \omega_1$.}
\label{fig:shell_qp_main}
\end{figure}

We calculate an $O(4)$ GSS approximation for the quasi-periodic external forcing.  Figure \ref{fig:qp_shell} shows the quasi-periodic GSS when $\Omega_1 = \omega_1$ and $\Omega_2 =\pi \omega_1$, where $\omega_1$ is the frequency corresponding to the slowest mode of the shell. In Fig.\ref{fig:omega12_qp_shell}, we plot the GSS for the case $\Omega_2 = \omega_2$ where $\omega_2$ is the frequency corresponding to the second slowest mode of the shell. In Fig. \ref{fig:period3_qp_shell}, we plot a period-3 GSS  for the case $\Omega_2 = 3\omega_1 $. Finally, in Fig.\ref{fig:rossler_shell}, we plot an $O(6)$ GSS approximation when the uniform pressure field is a chaotic signal taken from  eq. (\ref{eq:rossler}). 

In all the cases, we demonstrate that a nonlinear approximation of the GSS is needed to accurately predict the forced response of the full model. Table \ref{tab:runtime_shell} lists the computational run-times. We observe that our mode truncation step results in run-times that are 6 times faster than the full model simulation.

    \begin{figure}[H]
\centering
\begin{subfigure}{0.45\textwidth}
    \includegraphics[width=\textwidth]{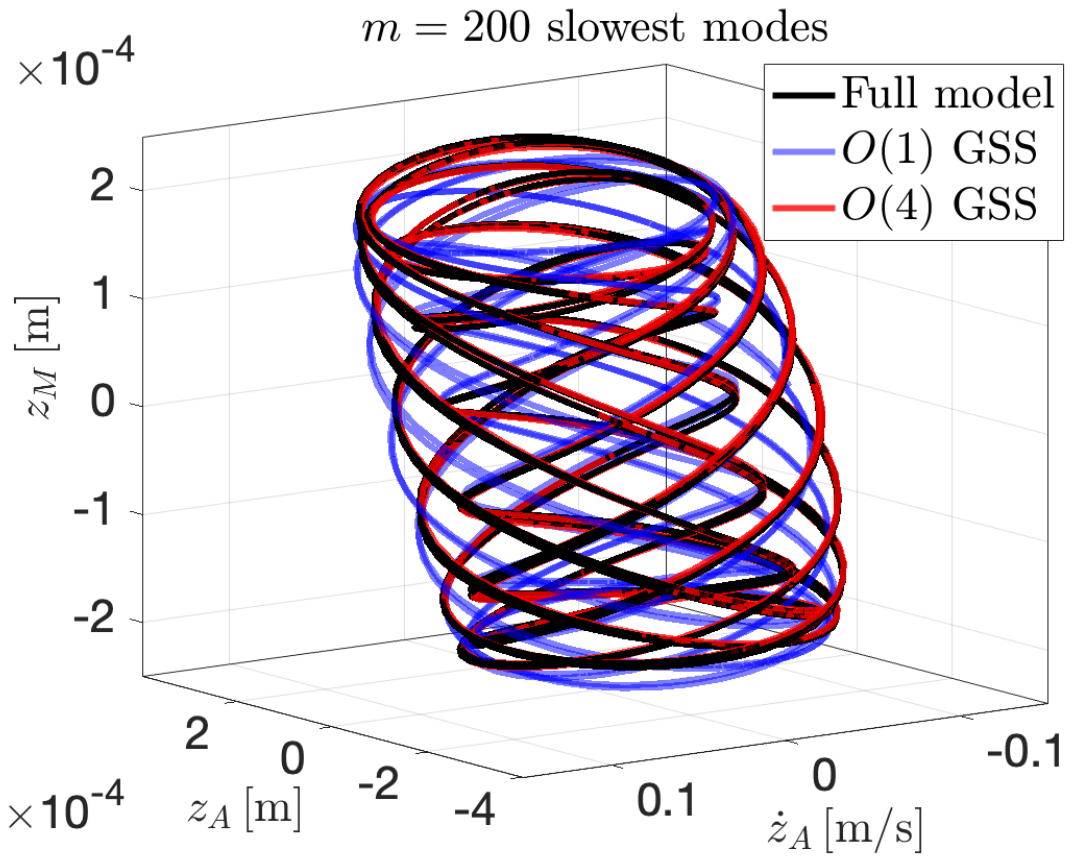}
    \caption{}
    \label{fig:omega12_qp_shell}
\end{subfigure}
\hfill
\begin{subfigure}{0.45\textwidth}
    \includegraphics[width=\textwidth]{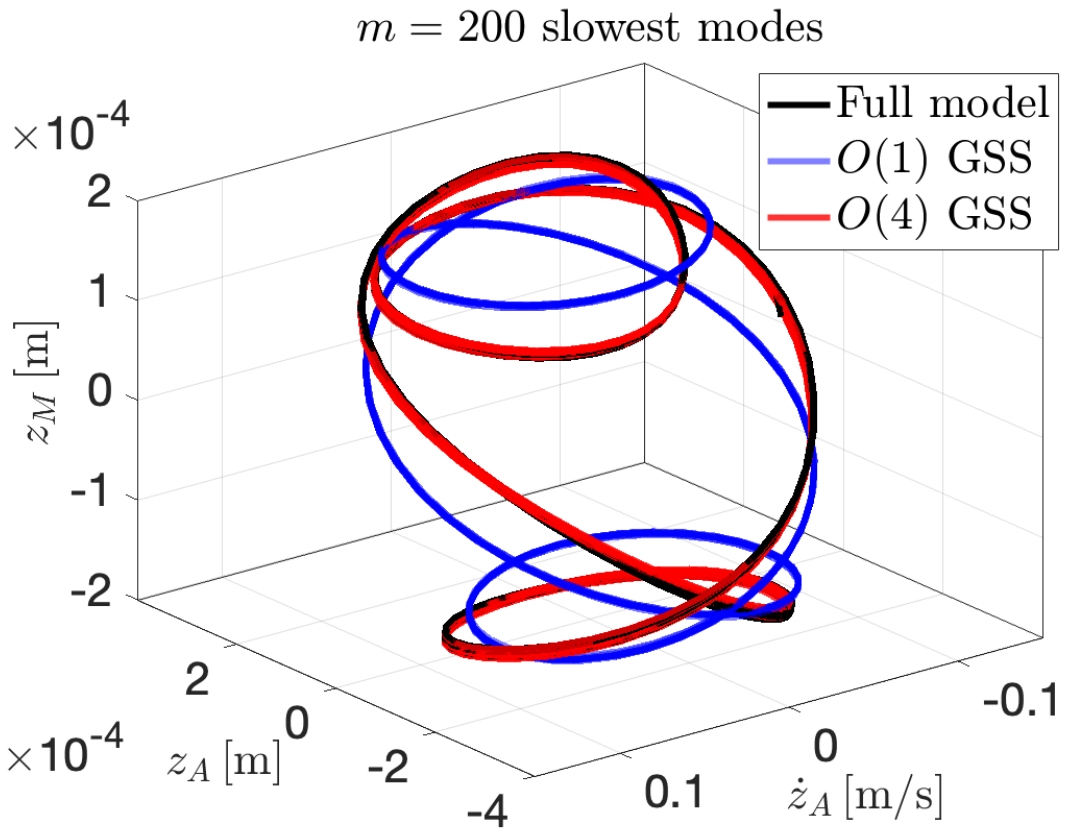}
    \caption{}
    \label{fig:period3_qp_shell}
\end{subfigure}
\hfill 
\begin{subfigure}{0.45\textwidth}
    \includegraphics[width=\textwidth]{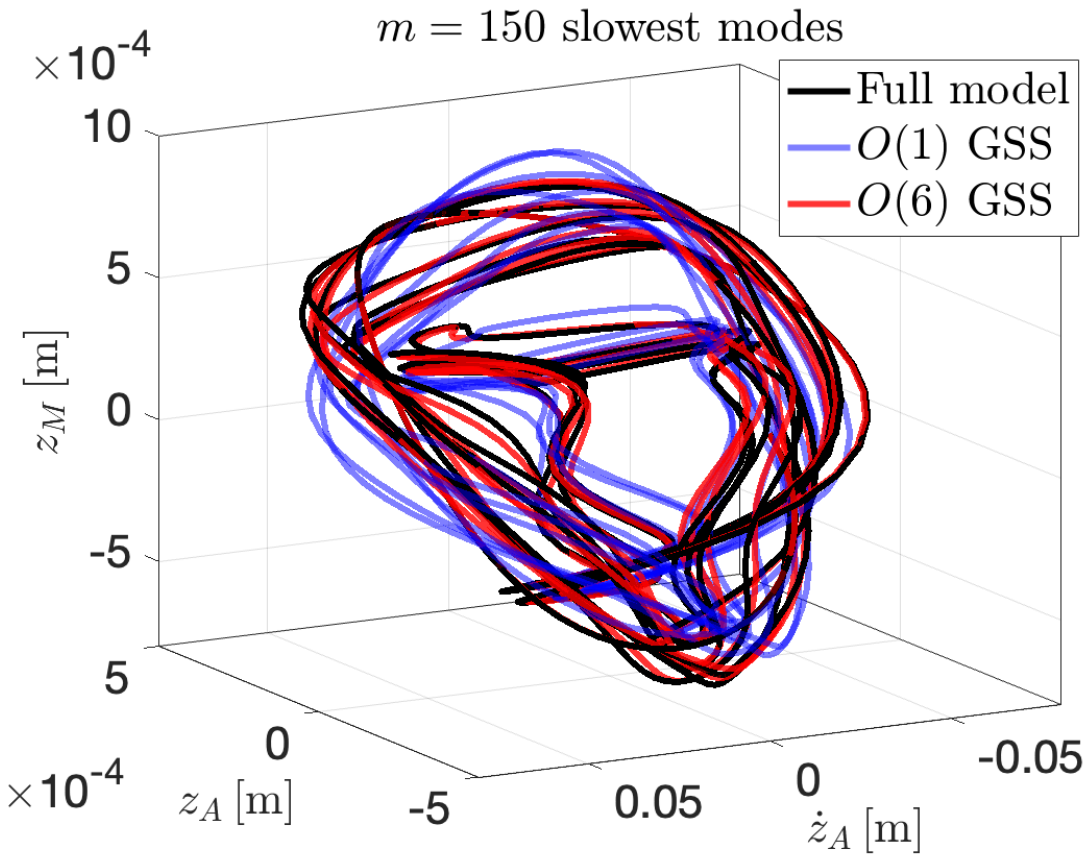}
    \caption{}
    \label{fig:rossler_shell}
\end{subfigure}
\caption{(a) The forced response of the shell under uniform quasi-periodic forcing with $\Omega_1 = \omega_1$ and $\Omega_2 = \omega_2$. (b) Period-3 shell response when  $\Omega_1 = \omega_1$ and $\Omega_2 = 3\omega_1$. (c) Chaotic shell response when the uniform pressure field is modeled using a chaotic R\"ossler system (\ref{eq:rossler}).}
\label{fig:shell_forcing}
\end{figure}

\begin{table}[h]
\centering
\caption{Run time comparisons for curved von K\'arm\'an shell}
\label{tab:runtime_shell}
\begin{tabular}{lcc}
\toprule
 & Quasiperiodic forcing & Chaotic forcing \\
\midrule
Full model & 3.35 min & 1.5 min \\
GSS & \textbf{31.8 s} $O(4)$ & \textbf{32.7 s} $O(6)$ \\
\bottomrule
\multicolumn{3}{c}{All the simulations are run on a M4 Macbook Pro 24 GB RAM laptop.} \\
\bottomrule
\end{tabular}
\end{table}

\section{Conclusion}
In this paper, we have used the recent theoretical results on generalized steady states (GSSs) of non-autonomous dynamical systems to develop a computational algorithm to locate GSSs in nonlinear mechanical systems under external forcing with arbitrary time dependence. We have focused on extracting GSSs via an asymptotic Taylor or Pad\'e expansion, assuming that the general forcing action on the system is specified as a temporal data matrix. We have also assumed that at least the linear part of the equations of motion is known.

Our approach has yielded a numerical toolkit, \textit{GSSTool} (see \citet{kaundinya_gsstool}), for practical settings in which unforced equilibria are stable and the forcing is purely external. The basic algorithm behind \textit{GSSTool} involves the evaluation of exponential kernel integrals, where the integrand is recursively defined by lower-order GSS Taylor expansion coefficients. Inspired by existing recursions offered by \textit{SSMTool}, we extended them to handle temporal GSS Taylor expansion coefficients to store minimally required terms for rapid recursive calculations. Equipped with these recursively computed integrands, we have also devised numerical integral schemes to evaluate exponential kernel integrals to the same accuracy as well-established implicit integrators such as the Newmark method.  

For larger amplitude forcing cases, when the GSS Taylor approximations diverge, we have provided a systematic re-summation technique to obtain convergent vector Pad\'e GSS approximations. In addition, we enhance the algorithm to handle structurally damped systems efficiently. We have also provided a numerically justified modal truncation criterion, enabling computational speed-ups while preserving accuracy.   

The high accuracy and speed of our method have been demonstrated across six examples that differ in forcing profiles, damping types, nonlinear complexity, and dimensionality. We also used \textit{GSSTool} on justified data-driven reduced models obtained via \textit{SSMLearn} for aperiodically forced mechanical systems. The examples in Sections 3 and 5 illustrate the success of this hybrid, partially data-driven approach, in which a 2-dimensional aperiodic SSM sufficed to predict the aperiodic GSS accurately. By contrast, state-of-the-art reduced-order models learned using an autoencoder and a long short-term memory (LSTM) neural network trained on forced data have turned out to require a 6- or 9-dimensional latent space. Yet, they still fail to approximate the aperiodic GSS with acceptable accuracy.

On the algorithmic side, further work is needed to (a) deploy \textit{GSSTool} for aperiodic forced response prediction on data-assisted SSM-reduced models for physical systems with degrees of freedom in the order of a million and (b) enable computation of GSSs for aperiodic parametric forcing. We believe that \textit{GSSTool}, coupled with robust methods like \textit{SSMTool} and \textit{SSMLearn}, can be successfully employed in commercial finite-element packages to compute aperiodic GSSs rapidly. 

\section*{Acknowledgements}
This work has been partially supported by grant $200021\_214908$ from the Swiss National Science Foundation (SNF). We are grateful to Thomas Simpson, Nikolaos Dervilis, Vlachas Konstantinos, and Eleni Chatzi for access to their implementation of the autoeconder+LSTM code. We also thank Eleni Chatzi for valuable suggestions on improving the training of large autoencoder+LSTM models.

\begin{appendices}
\section{Theorem and proof for the existence and uniqueness of a GSS}
\label{app:thm2}
\citet{haller24_wa} for a first-order system (\ref{eq:first_order_system}) and for forcing condition eq. (\ref{eq:uniform_boundedness}), prove hyperbolic fixed points of the unforced system survive as a unique uniformly bounded trajectories, retaining the qualitative hyperbolic behavior of the unforced fixed points. We restate the theorems for the setting when the hyperbolic fixed point is asymptotically stable and provide refined conditions that explicitly discuss convergence to the unique uniformly bounded trajectory.

\begin{thm}
[\textbf{Unique generalized steady state (GSS)}]
\label{thm:unique_GSS}
Consider a $\delta$-size ball $B_{\delta} \subset V$ centered about $\mathbf{z} = 0$, within this open-ball and for all $t \in \mathbb{R}$ assume the following: 
\begin{itemize}
    \item[] (A1) $\mathbf{F}$ and $\mathbf{G}$ are Lipschitz continuous with Lipschitz constants $L_\delta^F$ and $L_\delta^G$. 
    \item[] (A2) The conditions 
    \begin{align}
\label{eq:conditions}
\Delta \leq \delta \left( \frac{1}{\|\mathbf{V}\| \|\mathbf{V}^*\| \Gamma(\mathbf{\Lambda})} - 2(L^F_\delta + L^G_\delta)\right) - \|\mathbf{F}(\mathbf{z})\|, \quad  \quad  2\|\mathbf{V}\| \|\mathbf{V}^*\| \Gamma(\mathbf{\Lambda}) (L^F_\delta + L^F_\delta) \leq a <1,
\end{align}
are satisfied for some $a \in (0,1)$ and $\delta >0$. 
\end{itemize}

Then there exists a unique uniformly bounded generalized steady state (GSS) $\mathbf{z}^*(t)$ in $B_{\delta}$. The GSS is asymptotically stable and is as smooth in any parameter as system (\ref{eq:first_order_system}).  
\end{thm}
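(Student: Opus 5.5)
The plan is a Picard/Banach fixed-point argument on the space of uniformly bounded continuous trajectories, following \citet{jain19}. Let $X_\delta=\{\mathbf{z}\in C^0(\mathbb{R},\mathbb{R}^{2n}):\ \sup_t|\mathbf{z}(t)|\le\delta\}$ with the supremum norm; this is a complete metric space. Define the operator
\begin{equation*}
(\mathcal{T}\mathbf{z})(t)=\int_{-\infty}^t \mathbf{V}e^{\boldsymbol{\Lambda}(t-s)}\mathbf{V}^*\bigl(\mathbf{F}(\mathbf{z}(s))+\mathbf{G}(\mathbf{z}(s),s)\bigr)\,ds ,
\end{equation*}
using the normalization (\ref{eq:normalization_fo}), so that $(\mathbf{V}^*\mathbf{B}\mathbf{V})^{-1}=\mathbf{I}$. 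A bounded solution of (\ref{eq:first_order_system}) is exactly a fixed point of $\mathcal{T}$. To see this, differentiate under the integral for one direction. For the other direction, use variation of constants on $[t_0,t]$ and let $t_0\to-\infty$; the boundary term vanishes by the exponential decay (\ref{eq:dichotmy}). We set $\Gamma(\boldsymbol{\Lambda})=\sup_t\int_{-\infty}^t|e^{\boldsymbol{\Lambda}(t-s)}|ds$, which is finite (e.g.\ $\le 1/|\mathrm{Re}\lambda_1|$ in the diagonal case, or $K/\kappa$ in general) because all eigenvalues have negative real part.

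\emph{Step 1 (self-map).} For $\mathbf{z}\in X_\delta$, split $\mathbf{F}(\mathbf{z})+\mathbf{G}(\mathbf{z},s)$ as $\mathbf{G}(\mathbf{0},s)+[\mathbf{G}(\mathbf{z},s)-\mathbf{G}(\mathbf{0},s)]+\mathbf{F}(\mathbf{z})$. Bound the first term by $\Delta$ and the bracket by $L^G_\delta\delta$ via (A1). Bound $\mathbf{F}$ either directly by its sup on $B_\delta$ (this is the $\|\mathbf{F}(\mathbf{z})\|$ term appearing in (A2)) or by $L^F_\delta\delta$ using $\mathbf{F}(\mathbf{0})=\mathbf{0}$. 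This gives
\begin{equation*}
\|\mathcal{T}\mathbf{z}\|\le \|\mathbf{V}\|\|\mathbf{V}^*\|\Gamma(\boldsymbol{\Lambda})\bigl(\Delta+\|\mathbf{F}\|+2(L^F_\delta+L^G_\delta)\delta\bigr).
\end{equation*}
The first inequality in (A2), rearranged, makes the right-hand side at most $\delta$. I would keep the factor $2$ in the same form the statement uses, absorbing slack in the estimates. Continuity of $\mathcal{T}\mathbf{z}$ follows from the integral representation, since the integrand is merely bounded and measurable in $s$.

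\emph{Step 2 (contraction).} For $\mathbf{z},\mathbf{w}\in X_\delta$, apply the Lipschitz bounds of (A1) inside the integral:
\begin{equation*}
\|\mathcal{T}\mathbf{z}-\mathcal{T}\mathbf{w}\|\le\|\mathbf{V}\|\|\mathbf{V}^*\|\Gamma(\boldsymbol{\Lambda})(L^F_\delta+L^G_\delta)\|\mathbf{z}-\mathbf{w}\|\le \tfrac a2\|\mathbf{z}-\mathbf{w}\|.
\end{equation*}
The last inequality uses the second condition of (A2), reading the evident typo $L^F_\delta+L^F_\delta$ as $L^F_\delta+L^G_\delta$. The Banach fixed-point theorem then yields a unique $\mathbf{z}^*\in X_\delta$, i.e.\ the unique uniformly bounded GSS in $B_\delta$. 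The Picard iterates converge geometrically, which is the refined convergence statement.

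\emph{Step 3 (stability and smoothness).} This is the main obstacle. For asymptotic stability, let $\mathbf{z}(t)$ be any solution starting in $B_\delta$ at time $t_0$ and set $\mathbf{y}=\mathbf{z}-\mathbf{z}^*$. Variation of constants gives
\begin{equation*}
|\mathbf{y}(t)|\le\|\mathbf{V}\|\|\mathbf{V}^*\|\Bigl(e^{-\kappa(t-t_0)}|\mathbf{y}(t_0)|+\int_{t_0}^t e^{-\kappa(t-s)}(L^F_\delta+L^G_\delta)|\mathbf{y}(s)|ds\Bigr).
\end{equation*}
A Gronwall estimate then gives decay at rate $\kappa-\|\mathbf{V}\|\|\mathbf{V}^*\|(L^F_\delta+L^G_\delta)$. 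This rate is positive because of (A2), using $\Gamma\ge1/\kappa$. The delicate part is ensuring that $\mathbf{z}$ stays in $B_\delta$ so that the Lipschitz bounds remain valid. I would handle this by a bootstrap on a smaller ball, or by restricting the initial deviation. For smoothness in parameters, I would apply the uniform contraction principle: $\mathcal{T}$ depends on parameters as smoothly as $\mathbf{F},\mathbf{G}$, and its contraction constant is uniform. Hence the fixed point inherits $C^r$ dependence, for instance via the implicit function theorem applied to $\mathbf{z}-\mathcal{T}(\mathbf{z},p)=0$ on $X_\delta$, where $\mathbf{I}-D_\mathbf{z}\mathcal{T}$ is invertible since $\|D_\mathbf{z}\mathcal{T}\|\le a/2<1$. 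Finally, I would remark that (A2) is consistent with the smallness conditions of Theorem 1 in \citet{haller24_wa}.
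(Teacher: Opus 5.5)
Your proposal is correct and follows essentially the same route as the paper: a Picard/Banach contraction argument on uniformly bounded continuous trajectories in $B_\delta$, with the first condition in (A2) giving the self-map property and the second giving the contraction. Your reading of $L^F_\delta+L^F_\delta$ as a typo is in fact immaterial, since the first condition with $\Delta>0$ already forces $2\|\mathbf{V}\|\|\mathbf{V}^*\|\Gamma(\boldsymbol{\Lambda})(L^F_\delta+L^G_\delta)<1$. Your Step 3 goes beyond the paper's proof, which establishes only existence and uniqueness of the fixed point and asserts asymptotic stability and smooth parameter dependence without proof, on the strength of the earlier cited theorem.
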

\begin{proof}
We define a class of uniformly bounded $C^0$ trajectories in $B_{\delta}$:
\begin{equation}
C_{\delta}[\mathbb{R}]:=\{ \mathbf{z}:\mathbb{R} \to \mathbb{R}^{2n} |\quad  \|\mathbf{z}(t)\| \leq \delta\}.
\end{equation}
For any $\mathbf{z}(t) \in C_{\delta}$, we can treat the term $\mathbf{F}(\mathbf{z}(t)) + \mathbf{G}(\mathbf{z}(t),t)$  as an uniformly bounded aperiodic forcing applied to the linear system $\mathbf{B}\dot{\mathbf{z}} = \mathbf{A}\mathbf{z}$. Under this setting and using the transformation matrix $\mathbf{V}$, the general solution of system (\ref{eq:first_order_system}) for an arbitrary initial condition $\mathbf{z}_0$ is given by (see \citet{burd07}),
\begin{equation}
\label{eq:integral_equation_1}
\mathbf{z}(t) = \lim_{{t_0 \to -\infty}} \mathbf{V}e^{\mathbf{\Lambda}(t-t_0)}\mathbf{V}^* \mathbf{z}_0 + \int_{-\infty}^{t} \mathbf{V}e^{\mathbf{\Lambda}(t-s)}\mathbf{V}^*  \left[\mathbf{F}(\mathbf{z}(s)) + \mathbf{G}(\mathbf{z}(s),s) \right] ds.
\end{equation}
The first term in eq. (\ref{eq:integral_equation_1}) will vanish due to $\text{Re[Spec}(\mathbf{\Lambda})] < 0$. This results in an integral equation for the GSS in the form
\begin{equation}
\label{eq:integral_equations_GSS}
\mathbf{z}(t) =  \int_{-\infty}^{t} \mathbf{V}e^{\mathbf{\Lambda}(t-s)}\mathbf{V}^* \left[\mathbf{F}(\mathbf{z}(s)) + \mathbf{G}(\mathbf{z}(s),s) \right] ds.
\end{equation}
The simplest scheme to solve eq. (\ref{eq:dichotmy}), is to employ Picard iteration to the map
\begin{equation}
\label{eq:picard_map}
\mathbf{z}_{l+1}(t) = \mathcal{G}_P(\mathbf{z}_l)= \int_{-\infty}^{t} \mathbf{V}e^{\mathbf{\Lambda}(t-s)}\mathbf{V}^* \left[\mathbf{F}(\mathbf{z}_l(s)) + \mathbf{G}(\mathbf{z}_l(s),s) \right] ds,
\end{equation}
using random initial guess trajectories in $C_\delta$. The method is guaranteed to converge to a unique GSS if the mapping $\mathcal{G}_P$ is a contraction mapping on $C_\delta$. Next, we provide conditions for the convergence of iterations of $\mathcal{G}_P$. 

We first need to prove the mapping $\mathcal{G}_p$ maps all uniformly bounded functions in the open ball $B_\delta$ into $B_\delta$. Take $\mathbf{z}(t), \text{ } \tilde{\mathbf{z}}(t) \in C_\delta$, we can always write 
\begin{equation}
\mathcal{G}_p(\mathbf{\tilde z}(t)) = \mathcal{G}_p(\mathbf{z}(t)) + \int_{-\infty}^t \mathbf{V} e^{\boldsymbol \Lambda (t-s)} \mathbf{V}^* \left[\mathbf{F}(\mathbf{\tilde z}(s)) -\mathbf{F}(\mathbf{ z}(s)) + \mathbf{G}(\mathbf{\tilde z}(s),s) - \mathbf{G}(\mathbf{z}(s),s) \right] ds. 
\end{equation}
Taking the sup norm on both sides, we obtain 
\begin{align}
\|\mathcal{G}_p(\mathbf{\tilde z}(t))\| \leq \|\mathcal{G}_p(\mathbf{z}(t))\| + \Bigg\|\int_{-\infty}^t \mathbf{V} e^{\boldsymbol \Lambda (t-s)} \mathbf{V}^* \Big[\mathbf{F}(\mathbf{\tilde z}(s)) -&\mathbf{F}(\mathbf{ z}(s))  \quad + \nonumber \\
&\mathbf{G}(\mathbf{\tilde z}(s),s) - \mathbf{G}(\mathbf{z}(s),s) \Big] ds \Bigg\|, \nonumber
\end{align}
\begin{align}
\|\mathcal{G}_p(\mathbf{\tilde z}(t))\| \leq \|\mathcal{G}_p(\mathbf{z}(t))\| + \|\mathbf{V}\|\|\mathbf{V}^*\| \text{sup}_t \Bigg(\int_{-\infty}^t  |e^{\boldsymbol \Lambda (t-s)} |  \Big[ |\mathbf{F}(\mathbf{\tilde z}(s)) -&\mathbf{F}(\mathbf{ z}(s))| \quad + \nonumber \\
&|\mathbf{G}(\mathbf{\tilde z}(s),s) - \mathbf{G}(\mathbf{z}(s),s)| \Big] ds  ]\Bigg) \nonumber,
\end{align}
\begin{equation}
\|\mathcal{G}_p(\mathbf{\tilde z}(t))\| \leq \|\mathcal{G}_p(\mathbf{z}(t))\| + \|\mathbf{V}\|\|\mathbf{V}^*\| \mathrm{sup}_t\left( \int_{-\infty}^t  |e^{\boldsymbol \Lambda (t-s)} |  \left[ L^F_{\delta} |\mathbf{z}(s)-\mathbf{\tilde z}(s)| + L^G_{\delta} |\mathbf{z}(s)-\mathbf{\tilde z}(s)| \right] ds \right) .\nonumber
\end{equation}
From the definition of $C_\delta$, we have $|\mathbf{z}(s)-\mathbf{\tilde z}(s)|\leq \|\mathbf{z}(s)-\mathbf{\tilde z}(s)\|\leq 2 \delta$, which gives us 
\begin{equation}
\label{eq:main_inequality}
\|\mathcal{G}_p(\mathbf{\tilde z}(t))\| \leq \|\mathcal{G}_p(\mathbf{z}(t))\| + \|\mathbf{V}|\|\mathbf{V}^*\| 2 \delta (L^F_{\delta} + L^G_{\delta}) \text{sup}_t\left(\int_{-\infty}^t  |e^{\boldsymbol \Lambda (t-s)} | ds \right). 
\end{equation}
We can similarly estimate 
\begin{equation}
\|\mathcal{G}_p(\mathbf{ z}(t))\| \leq \|\mathbf{V}\| \|\mathbf{V}^*\| \Gamma(\mathbf{\Lambda}) \Big( \Delta + \|\mathbf{F}(\mathbf{z}(s))\| \Big)  , \nonumber
\end{equation}
with 
\begin{equation}
 \Gamma(\mathbf{\Lambda}) = \text{sup}_t\left(\int_{-\infty}^t  |e^{\boldsymbol \Lambda (t-s)} | ds \right) = \mathop{\text{max}}_{1\leq j \leq n} \left[\frac{1}{|\mathrm{Re}[\lambda _j]|}\right]  . \nonumber
\end{equation}
Plugging all this back into eq.(\ref{eq:main_inequality}) gives
\begin{equation}
\|\mathcal{G}_p(\mathbf{\tilde z}(t))\| \leq \|\mathbf{V}\| \|\mathbf{V}^*\| \Gamma(\mathbf{\Lambda}) \Big( \Delta + \|\mathbf{F}(\mathbf{z}(s))\| \Big) + \|\mathbf{V}|\|\mathbf{V}^*\| 2 \delta (L^F_{\delta} + L^G_{\delta}) \Gamma(\mathbf{\Lambda}),
\end{equation}
by requiring $\|\mathcal{G}_p(\mathbf{\tilde z}(t))\| \leq \delta$ we get the first condition in eq.(\ref{eq:conditions}). Next step is to prove the mapping $\mathcal{G}_p : C_\delta \to C_{\delta}$ is a contraction. We start by estimating 
\begin{equation}
\|\mathcal{G}_p(\mathbf{z}(s))-\mathcal{G}_p(\mathbf{\tilde z }(s))\| \leq \|\mathbf{V}|\|\mathbf{V}^*\|  (L^F_{\delta} + L^G_{\delta}) \text{sup}_t\left(\int_{-\infty}^t  |e^{\boldsymbol \Lambda (t-s)} | |\mathbf{z}(s)-\mathbf{\tilde z}(s)| ds \right), \nonumber 
\end{equation}
\begin{equation}
\label{eq:final_ineq}
\|\mathcal{G}_p(\mathbf{z}(s))-\mathcal{G}_p(\mathbf{\tilde z }(s))\| \leq \|\mathbf{V}|\|\mathbf{V}^*\|  (L^F_{\delta} + L^G_{\delta}) \text{sup}_t\left(\int_{-\infty}^t  |e^{\boldsymbol \Lambda (t-s)} |  ds \right) \|\mathbf{z}(s)-\mathbf{\tilde z}(s)\|. \nonumber 
\end{equation}
For a mapping to be a contraction we need 
\begin{equation}
\|\mathcal{G}_p(\mathbf{z}(s))-\mathcal{G}_p(\mathbf{\tilde z }(s))\| \leq a \|\mathbf{z}(s)-\mathbf{\tilde z}(s)\|, 
\end{equation}
with $a < 1$. Enforcing this on eq. (\ref{eq:final_ineq}) provides us with the second condition in eq. (\ref{eq:conditions}).  

We also observe that if set $\Gamma(\Lambda) \equiv \frac{1}{\kappa}$ (where $\kappa$ is defined in eq.(\ref{eq:dichotmy})), $\|\mathbf{V}\|\|\mathbf{V}^*\| \equiv K$ and further impose the condition $4(L^F_{\delta}+L^G_{\delta})K \leq \kappa$ (see \citet{palmer73} and \citet{haller24_wa}), conditions (\ref{eq:conditions}) modify to 
\begin{align}
\Delta \leq \delta \left( \frac{\kappa}{K} - 2\frac{\kappa}{4K}\right) - \|\mathbf{F}(\mathbf{z})\|, \quad   2\frac{K}{\kappa} (L^F_\delta + L^F_\delta) \leq a <1.
\end{align}
We recover conditions stated in assumption (A2) in Theorem \ref{thm:unique_GSS} for $a=0.25$. It appears that the conditions from \citet{palmer73} and \citet{haller24_wa} are stricter bounds for the existence of the GSS compared to the derived bounds to demonstrate convergence via Picard iteration. Overall, the conditions listed in assumption (A2) guarantee the mapping is a contraction, hence, proving the existence of a unique uniformly bounded solution to the integral equations (\ref{eq:integral_equation_1}). Specifically, the strength of convergence to the GSS is dictated by the factor 
\begin{equation}
\label{eq:Estimate_convg}
\Gamma(\mathbf{\Lambda}) = \mathop{\text{max}}_{1\leq j \leq n} \left[\frac{1}{|\mathrm{Re}[\lambda _j]|}\right] 
\end{equation}
and the validity of the contraction mapping is determined by maximum magnitude of forcing $\Delta$.
\end{proof}

Theorem \ref{thm:unique_GSS} provides a local existence and uniqueness result for a generalized steady state (GSS) that is confined to a $\delta$-size ball close to the origin. Equation (\ref{eq:integral_equations_GSS}) and Theorem \ref{thm:unique_GSS} are generalizations of the results stated in \citet{jain19} for periodic and quasi-periodic forcing. Assumption (A2) conditions provide conservative estimates on the magnitude of external forcing $\Delta$. A faster convergence to the GSS is only guaranteed if the maximum forcing magnitude is small and the system is highly damped. Most physical setups will experience much larger forcing and suffer from low damping which result in Picard-type iterations of obtaining a unique GSS to be computationally intractable.

\section{Deriving the GSS Taylor coefficients from recursions used in \textit{SSMTool}}
\label{app:radial_derivative_Faa_Di_bruno}

We first rewrite the left-hand side of eq. (\ref{eq:radial_derivative}) as 
\begin{equation}
\left[\sum_{q=1}^{\nu}\left(\sum_{j=1}^{q} \mathbf{z}_{({l}_j)}(s)  \right)^{\boldsymbol{\gamma} }\right]_{O(\Delta^{\nu})} = \left[\sum_{q=1}^{\nu} \prod_{i=1}^{2n} \left( \sum_{j=1}^q \mathbf{z}_{({l}_j)}^i(s)\right)^{\boldsymbol{\gamma}_i}\right]_{O(\Delta^{\nu})}.
\end{equation}
Expanding each of the the multi-monomial functions appearing in the sum-product, we obtain
\begin{equation}
 \left[\sum_{q=1}^{\nu} \prod_{i=1}^{2n} \left( \sum_{j=1}^q \mathbf{z}_{({l}_j)}^i(s)\right)^{\boldsymbol{\gamma}_i}\right]_{O(\Delta^{\nu})} = \left[\sum_{q=1}^{\nu} \prod_{i=1}^{2n} \left(\sum_{\boldsymbol{\gamma}_i = \sum_{j=1}^q \mathbf{k}_{ji}} \boldsymbol{\gamma}_i! \prod_{j=1}^q \frac{\left(\mathbf{z}_{({l}_j)}^i (s)\right)^{\mathbf{k}_{ji}}}{\mathbf{k}_{ji}!}  \right) \right]_{\sum_{j=1}^q\sum_{i=1}^{2n} \mathbf{k}_{ji}l_j = \nu}.
\end{equation}
By grouping $\boldsymbol{\gamma}_i!$ with the product outside the bracket and also moving the first summation to the inside of the bracket, we can rearrange terms to match the factor eq. (\ref{eq:z_factor}) appearing in the multivariate Faa Di Bruno formulas \citet{constantine96}. The multi-monomial expansion brings out the index set eq. (\ref{eq:index_set}) as well, where the vectors $l_j$ by definition of the GSS Taylor expansion are already ordered. Hence, we have justified the use of already existing compositional structure from \textit{SSMTool} to compute the GSS Taylor expansion coefficients. 

\section{Derivation of $\mathbf{Q}_j(\delta t)$ for second-order systems}
\label{app:second_qj}

We diagonalize the matrix $\boldsymbol{\lambda}_j$ in eq. (\ref{eq:block_second_order}), 
\begin{equation}
\label{eq:diag_lambda}
\boldsymbol{\lambda}_j = \frac{1}{-2\omega_j \sqrt{\zeta^2_j-1}}\begin{pmatrix} 1 & 1 \\ \lambda_+ & \lambda_- \end{pmatrix}\begin{pmatrix} \lambda_+ & 0 \\0 & \lambda_-\end{pmatrix} \begin{pmatrix} \lambda_- & -1 \\ \lambda_+ & 1 \end{pmatrix},
\end{equation}
 where we have 
\begin{equation}
\lambda_{\pm}  = \left(-\zeta_j \pm \sqrt{\zeta_j^2-1}\right)\omega_j.
\end{equation}
We expand explicitly the exponential appearing in the integral eq. (\ref{eq:second_order_integral}) using eq. (\ref{eq:diag_lambda}) to obtain
\begin{equation}
\label{eq:integral}
e^{-\boldsymbol{\lambda}_j s} = \frac{1}{-2\omega_j \sqrt{\zeta^2_j-1}}\begin{pmatrix} 1 & 1 \\ \lambda_+ & \lambda_- \end{pmatrix}\begin{pmatrix} e^{-\lambda_+s} & 0 \\0 & e^{-\lambda_-s} \end{pmatrix} \begin{pmatrix} \lambda_- & -1 \\ \lambda_+ & 1 \end{pmatrix}.
\end{equation}
We substitute this expression into the integral eq. (\ref{eq:second_order_integral}) and perform the integration under the  assumption that the forcing $[\boldsymbol\phi(s+t)]_j$ is piecewise linear in the interval $[0,\delta t ]$. The integrals arising from this are 

\begin{equation}
\label{eq:second_integrals}
\frac{1}{-2 \omega_j \sqrt{\zeta_j^2 - 1}} \begin{pmatrix}
\alpha \int_{0}^{\delta t} \left( e^{-\lambda_- s} - e^{-\lambda_+ s} \right) ds + \frac{\beta - \alpha}{\delta t} \int_{0}^{\delta t} s \left( e^{-\lambda_- s} - e^{-\lambda_+ s} \right) ds \\
\alpha \int_{0}^{\delta t} \left( -\lambda_+ e^{-\lambda_+ s} + \lambda_- e^{-\lambda_- s} \right) ds + \frac{\beta - \alpha}{\delta t} \int_{0}^{\delta t} s \left( -\lambda_+ e^{-\lambda_+ s} + \lambda_- e^{-\lambda_- s} \right) ds
\end{pmatrix},
\end{equation}

% Definitions of a and b
where the terms \(\alpha\) and \(\beta\) are defined as,
\begin{equation}
\alpha = [\mathbf{U}^\top \boldsymbol{\phi}(t)]_j, \quad \beta = [\mathbf{U}^\top \boldsymbol{\phi}(t + \delta t)]_j.
\end{equation}

The integrals in eq. (\ref{eq:second_integrals}) can be evaluated by using the fundamental scalar exponential integrals, 
\begin{equation}
 \int_{0}^{\delta t} e^{-\lambda s} ds = \frac{1 - e^{-\lambda \delta t}}{\lambda},
\end{equation}

and
\begin{equation}
\begin{aligned}
\int_0^{\delta t} s e^{-\lambda s} ds = \Bigg[-\frac{s}{\lambda} e^{-\lambda s} - \frac{1}{\lambda^2} e^{-\lambda s} \Bigg]_0^{\delta t}.
\end{aligned}
\end{equation}

For $\zeta_j \neq 1$ and $\zeta_j >0$,  we obtain the matrix expression for \(\mathbf{Q}^{u}_j(\delta t)\) and \(\mathbf{Q}^{d}_j(\delta t)\),
\begin{equation}
\mathbf{Q}^u_j(\delta t) = \mathbf{Q}^d_j(\delta t) = \frac{1}{-2 \omega_j \sqrt{\zeta_j^2 - 1}} \begin{bmatrix}
q_{11} & q_{12} \\
q_{21} & q_{22}
\end{bmatrix},
\end{equation}
where the matrix entries are

\begin{align}
\nonumber q_{11} &= \frac{1 - e^{-\lambda_- \delta t}}{\lambda_-} - \frac{1 - e^{-\lambda_+ \delta t}}{\lambda_+} - \frac{1}{\delta t} \left[ \left( -\frac{\delta t}{\lambda_-} e^{-\lambda_- \delta t} - \frac{1}{\lambda_-^2} e^{-\lambda_- \delta t} + \frac{1}{\lambda_-^2} \right) - \left( -\frac{\delta t}{\lambda_+} e^{-\lambda_+ \delta t} - \frac{1}{\lambda_+^2} e^{-\lambda_+ \delta t} + \frac{1}{\lambda_+^2} \right) \right], \\
\nonumber q_{12} &= \frac{1}{\delta t} \left[ \left( -\frac{\delta t}{\lambda_-} e^{-\lambda_- \delta t} - \frac{1}{\lambda_-^2} e^{-\lambda_- \delta t} + \frac{1}{\lambda_-^2} \right) - \left( -\frac{\delta t}{\lambda_+} e^{-\lambda_+ \delta t} - \frac{1}{\lambda_+^2} e^{-\lambda_+ \delta t} + \frac{1}{\lambda_+^2} \right) \right], \\
\nonumber q_{21} &= e^{-\lambda_+ \delta t} - e^{-\lambda_- \delta t} - \frac{1}{\delta t} \left[ \left( \delta t e^{-\lambda_+ \delta t} + \frac{1}{\lambda_+} e^{-\lambda_+ \delta t} - \frac{1}{\lambda_+} \right) - \left( -\delta t e^{-\lambda_- \delta t} - \frac{1}{\lambda_-} e^{-\lambda_- \delta t} + \frac{1}{\lambda_-} \right) \right], \\
\nonumber q_{22} &= \frac{1}{\delta t} \left[ \left( \delta t e^{-\lambda_+ \delta t} + \frac{1}{\lambda_+} e^{-\lambda_+ \delta t} - \frac{1}{\lambda_+} \right) - \left( -\delta t e^{-\lambda_- \delta t} - \frac{1}{\lambda_-} e^{-\lambda_- \delta t} + \frac{1}{\lambda_-} \right) \right].
\end{align}

For the case when $\zeta_j =1$, we take the limit $\text{lim}_{\zeta_j \to 1}\mathbf{Q}^{u}_j(\delta t)$ to derive the matrix expression for \(\mathbf{Q}^{c}_j(\delta t)\), we obtain
\begin{equation}
\mathbf{Q}^c_j(\delta t) = \begin{bmatrix}
q_{11} & q_{12} \\
q_{21} & q_{22}
\end{bmatrix},
\end{equation}
with the matrix entries,

\begin{align}
\nonumber q_{11} &= \frac{1}{\omega_j^2} \left( 1 - e^{-\omega_j \delta t} \left( 1 + \omega_j \delta t \right) \right), 
& \nonumber q_{12} &= \frac{1}{\omega_j^2} e^{-\omega_j \delta t} \left( \omega_j \delta t - 1 + e^{\omega_j \delta t} \right), \\
\nonumber q_{21} &= \frac{1}{\omega_j} e^{-\omega_j \delta t} \left( -1 + e^{\omega_j \delta t} - \omega_j \delta t \right), &
\nonumber q_{22} &= \frac{1}{\omega_j} e^{-\omega_j \delta t} \left( 1 + \omega_j \delta t - e^{\omega_j \delta t} \right).
\end{align}

\section{GSS results for periodic forcing}
\label{app:periodic_frcs}
In this section, we assume that the external forcing is a quasiperiodic function with $q$ rationally incommensurate frequencies $\boldsymbol{\Omega} \in \mathbb{T}^q$. We can then write the integrand in eq. (\ref{eq:fund_integral}) as
\begin{equation}
\boldsymbol{\Phi}(t) =  \sum_{\mathbf{k}\in \mathbb{Z}^{q}} \mathbf{g}_k e^{i\langle \mathbf{k},\Omega\rangle t}. 
\end{equation}
Using this form, we explicitly evaluate the integrals in eq. (\ref{eq:fund_integral}). We focus here on the general damping case, and compute integral formulas for eq. (\ref{eq:L_j_gd}) in the quasiperiodic setting. This results in the expression, 
\begin{equation}
\label{eq:qp_result}
\mathbf{L}_j(t, \delta t) = \sum_{\mathbf{k} \in \mathbb{Z}^q} [\mathbf{V}^{\mathrm{T}} \mathbf{g}_{\mathbf{k}}]_j e^{i \langle \mathbf{k}, \boldsymbol{\Omega} \rangle t} \left( \frac{e^{(-\lambda_j + i \langle \mathbf{k}, \boldsymbol{\Omega} \rangle) \delta t} - 1}{-\lambda_j + i \langle \mathbf{k}, \boldsymbol{\Omega} \rangle} \right).
\end{equation}
We bring attention to the term in the denominator in eq. (\ref{eq:qp_result}). This term will develop a singularity when the eigenvalues of the system are in near-resonance with integer linear combinations of the  external forcing function's frequencies. We remark that this conclusion is absent if one derives formulas for eq. (\ref{eq:L_j_gd}) using the linear piecewise assumption, which only had a singularity when $\lambda_j =0$. 

We demonstrate the sensitive dependence on parameters of our GSS formulas, when we evaluate forced response curves (FRCs) for the oscillator chain model described in Section \ref{subsec:e2} with periodic forcing now applied to the $5^{th}$ mass in the chain. We further consider three different physical setups of the oscillator chain, with different damping parameters $c = 0.01 \text{ [Ns/m]}$ (low damping), $c = 0.1 \text{ [Ns/m]}$ (medium damping) and $c =3 \text{ [Ns/m]}$  (high damping) but same linear and nonlinear stiffness parameters as in Section \ref{subsec:e2}.

\begin{figure}[H]
    \begin{centering}
    \includegraphics[width=1\textwidth]{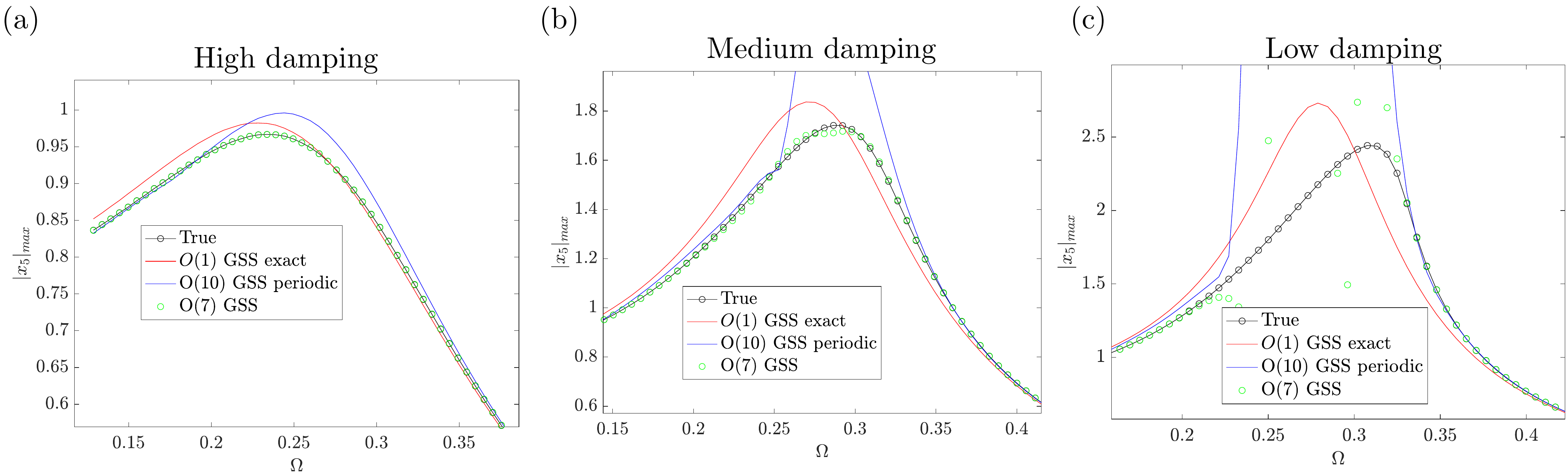}
    \par\end{centering}
    \caption{Forced response curves (FRC) of the full model (black), linear GSS (red), periodic $O(10)$ GSS (blue) and $O(7)$ GSS (green circle)  for the cases when the damping matrix is (a) highly damped  (b) mid damping and (c) lowly damped.}
    \label{fig:app_periodic}
    \end{figure}
    
We refer to the GSS calculated using the formula eq. (\ref{eq:qp_result}) as GSS periodic and at each GSS expansions order we use $5$ Fourier modes to compute eq. (\ref{eq:qp_result}). Since the forcing is periodic, we also calculate the GSS of the linear system exactly, this we label as $O(1)$ GSS exact. Figures \ref{fig:app_periodic} (a)-(c), plots the FRCs for the three different damping regimes. We observe that calculating an $O(7)$ GSS approximation using the piecewise linear approximation of the forcing is reliable across damping cases when the system is not near the resonant frequency. We notice that an $O(10)$ GSS periodic approximation is able to predict well away from the resonant frequency, but in the low and medium damping case the singularity clearly shows up. This exercise has demonstrated that the formulas for the GSS we have derived are consistent with the periodic setting as well. Further, it also cautions the user that \textit{GSSTool} is not suitable to use for predicting responses for near resonant periodic forcing. One should use continuation methods coupled with periodic SSM-reduced models (see \citet{jain2022}, \citet{li22a}, and \citet{li22b}), these have been successful and also are computationally fast to compute FRCs.

\section{LSTM training procedure}
\label{app:lstm}
The architecture and training configurations for the autoencoder (AE) and long short-term memory (LSTM) models, as applied in this study, are as follows:

As proposed for this oscillator chain by \citet{simpson2021}, the encoder consists of three layers: The first two are dense, 20-dimensional linear and tanh activated layers, respectively. A final dense bottleneck layer reduces the dimension of the latent space. 

In contrast, the corresponding decoder mirrors the encoder, reconstructing the input from the compressed representation. While increasing the number of retained dimensions generally improves data recovery, it also raises the number of trainable parameters and computational load in the subsequent computation. As indicated in the example, a bottleneck dimension of nine proved to optimize the trade-off between dimensionality reduction and fidelity in recovery performance (see \citet{simpson2021}). This results in a total of 2,069 trainable parameters.

 We follow the steps outlined in \citet{simpson2021} by training the network on the initial 5,000-time steps of a simulated displacement history using an \textit{Adam} optimizer with gradient clipping and an exponentially decaying learning rate to minimize the risk of overfitting.

Following which a two-layer stacked LSTM network is trained. The LSTM network is composed of two tanh-activated layers with 27 and 20 dimensions, respectively, followed by a linear activation output layer with nine dimensions. This configuration yields a total of 8,241 trainable parameters. The network is trained on the first 5,000 time steps and validated on the following 2,000 steps using a gradient-clipped Adam optimizer with exponential learning rate decay. 

We use a second, LSTM initialized with pre-trained weights and in the stateful mode for response prediction in the reduced space. The stateful LSTM maintains its state across batches, which is necessary to predict sequentially based on previous time steps. We iteratively predict one step ahead based on the 50 time steps. Since the LSTM network's memory dependence limits predictions on the first timesteps, we pad the forcing signal with zeros at the start.

\begin{figure}[H]
\centering
\begin{subfigure}{0.45\textwidth}
    \includegraphics[width=\textwidth]{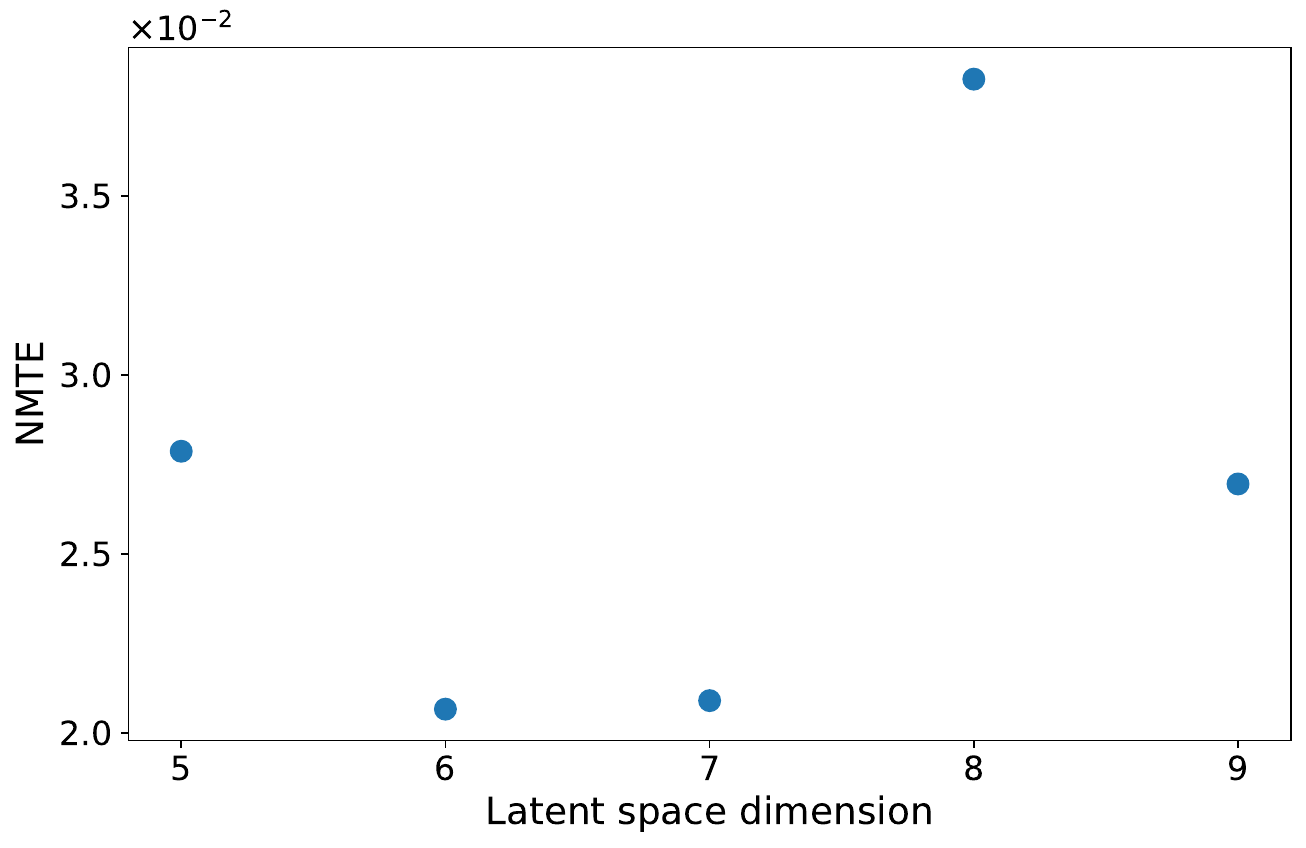}
    \caption{}
    \label{fig:vk_beam_splits1}
\end{subfigure}
\hfill
\begin{subfigure}{0.45\textwidth}
    \includegraphics[width=\textwidth]{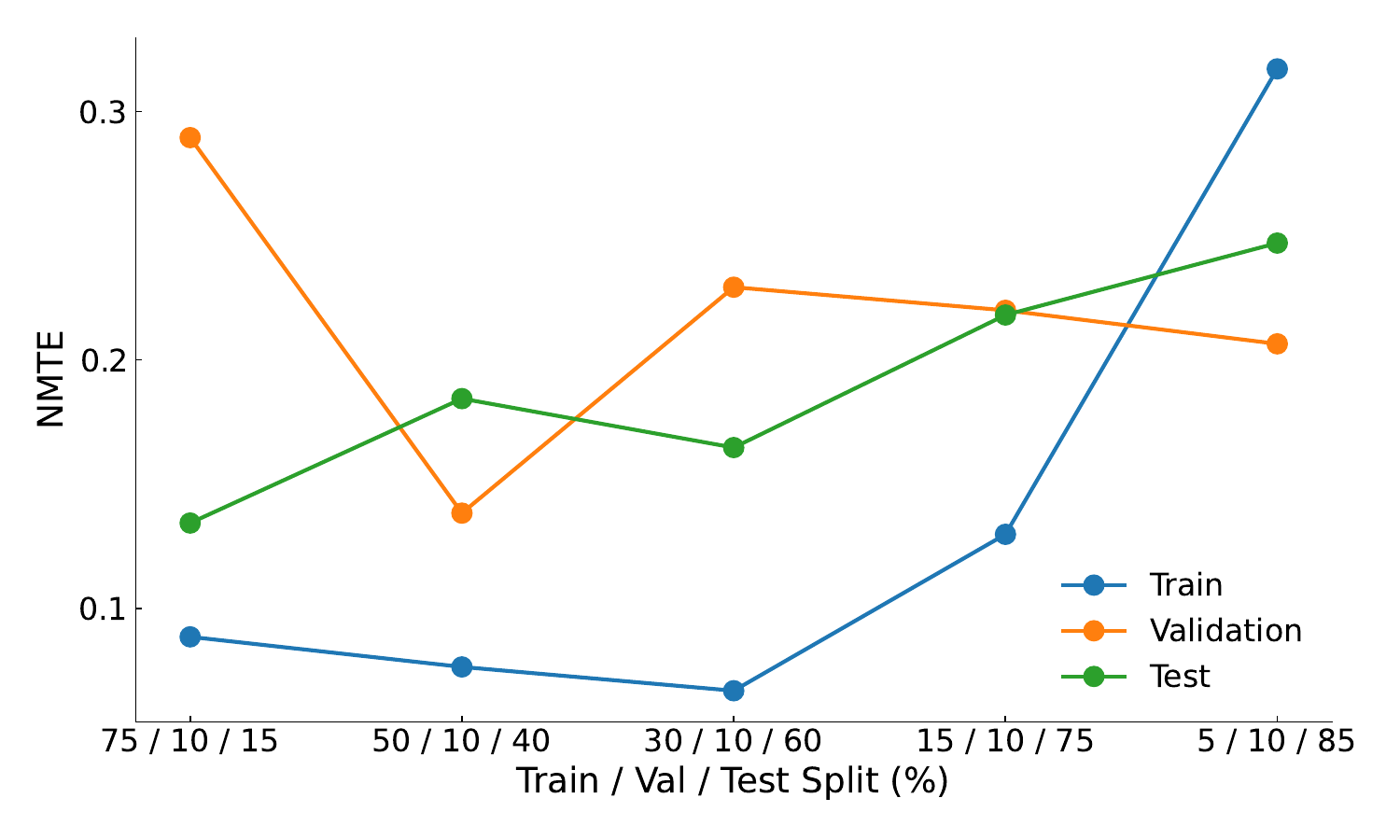}
    \caption{}
    \label{fig:vk_beam_splits2}
\end{subfigure}
\caption{(a) Normalized mean trajectory error comparisons for increasing AE dimension for the von Kármán beam example for 50/10 splitting for training and validation of the forcing data. (b) Normalized mean trajectory error comparisons for different training, testing and validation partitions of the single forcing response for the von Kármán beam example with a $6$D AE+LSTM model.}
\label{fig:vk_beam_splits}
\end{figure}

We repeat a similar procedure for a stochastically forced von K\'arm\'an beam example in section \ref{subsec:e5}. We found the approach to be easily implementable to that setting given one single realization of the forcing. The training procedure resulted in double the number of trainable parameters for the AE compared to the oscillator chain, with the LSTM network dimension remaining more or less the same. The minimal dimension of the latent space of the AE + LSTM model was found to be $6$. The dimension of the AE + LSTM model was selected based on two criteria: low normalized mean trajectory error (NMTE) for the AE reconstruction (see Fig. \ref{fig:vk_beam_splits}a) and low NMTE for the long-term predictions of the AE + LSTM model on the validation dataset (see Fig. \ref{fig:vk_beam_splits}b).

We observe that smaller training sets lead to worse predictions on test data. Throughout our examples, we use the 50/10 splitting for training and validation, which offers lower errors on validation data and also does not use up a lot of the forcing response. 

We also baseline our AE+LSTM model choice with a higher-dimensional AE, which has roughly 33,000 parameters, an 11-fold increase in trainable parameters compared to our autoencoder architecture. Due to the increase in trainable parameters, we use a 70/10 split for training and validation to characterize the new architecture's latent space effectively. Our algorithm found that the minimal dimension of the latent space was 11. In Fig. \ref{fig:ae_compare}, we compare the low- and high-dimensional AE+LSTM models' predictions on unseen forcing data. We see that the 6D AE+LSTM model's predictions have lower errors and faster run times than those of the 11D AE+LSTM model. Overall, higher-dimensional AE+LSTMs may offer more trainable parameters to tune to better fit the dynamics in the latent space. Still, they do not guarantee a performance boost once the predictions are lifted to the physical space.

\begin{figure}[H]
\centering

    \includegraphics[width=0.5\textwidth]{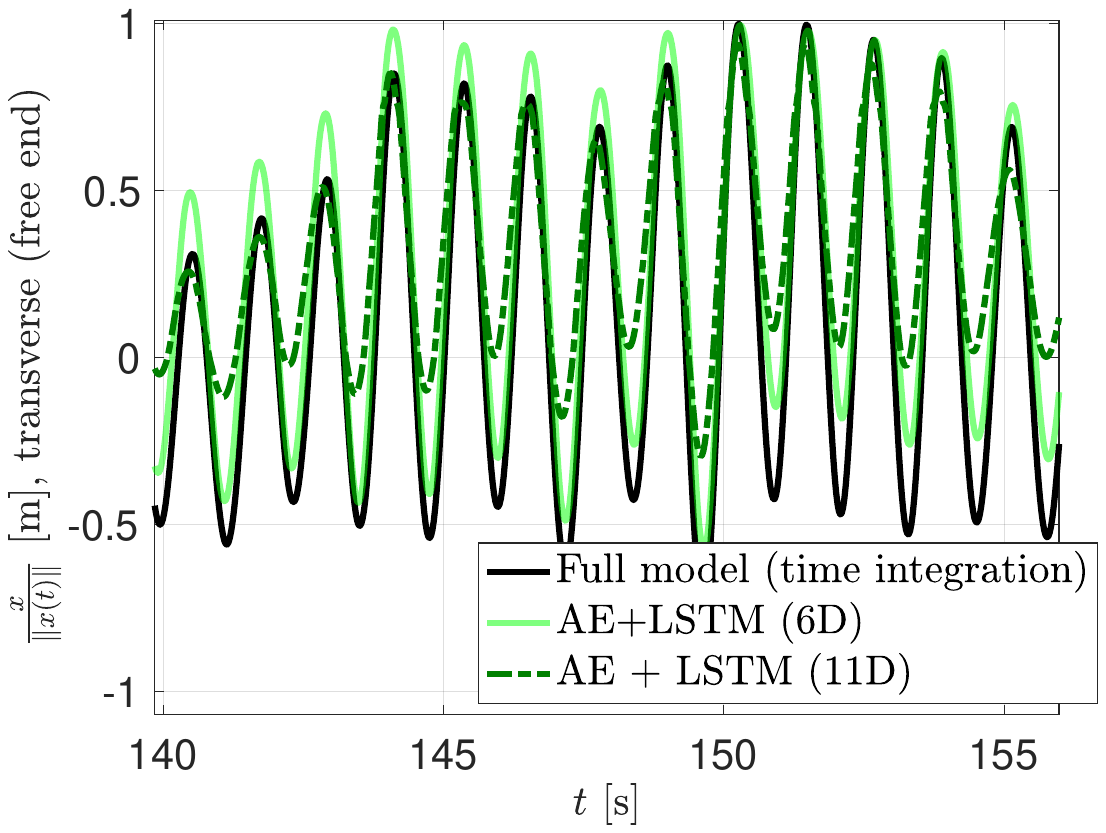}
    \caption{Time evolution for the normalized transverse displacement response of the free end of the von K\'arm\'an beam subject to stochastic ground excitations. The AE+LSTM (6D) prediction is shown in green, and the AE+LSTM (11D) is shown in dark green.}
    \label{fig:ae_compare}

\end{figure}
    
\section{Pad\'e and Taylor GSS comparisons}
\label{app:pade}
We illustrate here the divergent behavior of the Taylor expansions for GSS under large forcing values. This particular GSS Taylor approximation was computed using the leading order forced SSM-reduced model of the cantilevered  von K\'am\'an beam. Figure \ref{fig:pade_app} shows in orange an $O(20)$ GSS approximation, wherein the trajectory oscillates rapidly with large values and grows as time increases. Using formulas eq. (\ref{eq:lin_pade}), we compute a  $[10,10]$ vector Pad\'e approximation using the computed Taylor expansions. These rational functional approximations converge to the full response for both the axial and transverse degrees of freedom of the beam's free end.  

We also plot the $O(1)$ GSS approximation, which has a bounded response but fails to accurately track the amplitudes of the full model's GSS. We recall that this is a linear GSS of the SSM-reduced dynamics, but once lifted via the SSM parametrization it becomes a nonlinear response, and hence provides a reasonably large response in the axial degrees of freedom. Our analysis indicates that a vector Pad\'e approximation improves the accuracy of the GSS approximation dramatically from the Taylor expansions and also provides better refinement than a simple linear GSS calculation.  

\begin{figure}[H]
    \begin{centering}
    \includegraphics[width=1.1\textwidth]{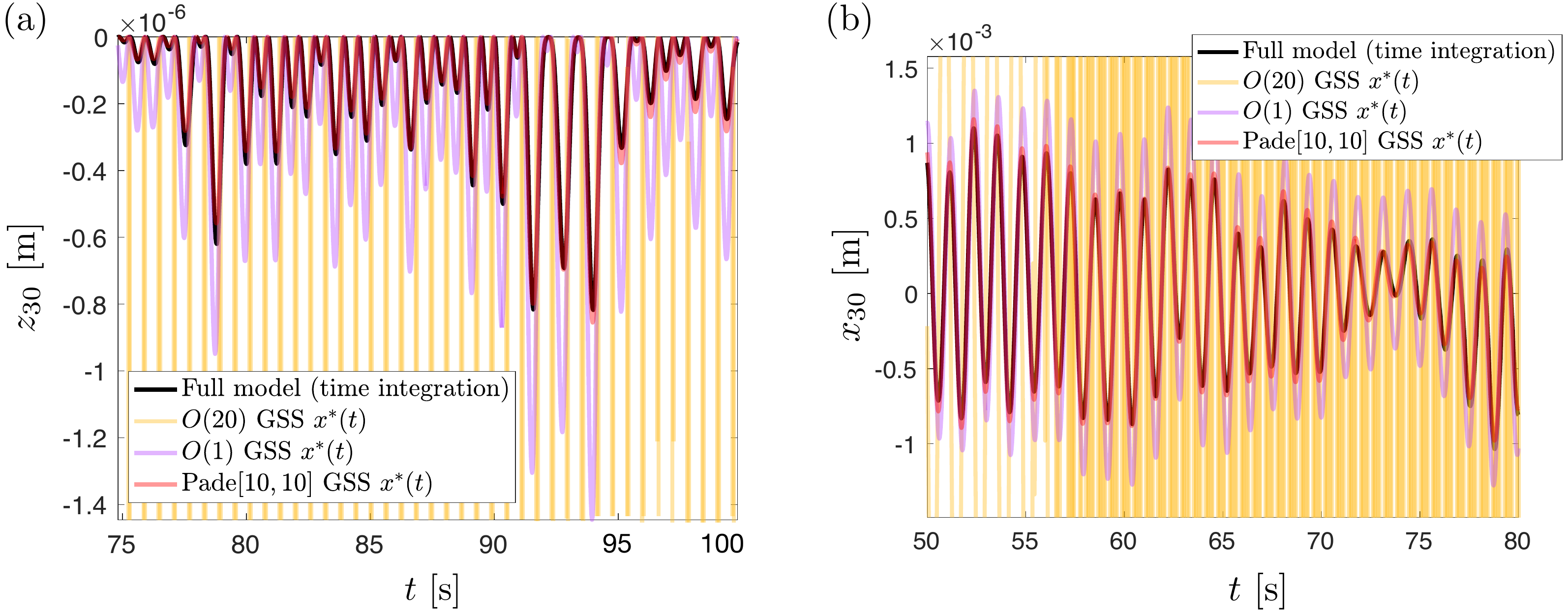}
    \par\end{centering}
    \caption{(a) Axial and (b) transverse displacement trajectory plots for the full model (black), Taylor approximation of the GSS (gold), linear GSS (pink) and vector Pad\'e approximation of the GSS (red).}
    \label{fig:pade_app}
    \end{figure}
\
\end{appendices}

\bibliographystyle{plainnat}
\bibliography{SSM_bibliography,sample}

\end{document}